\newtheorem{thm}{Theorem}
\newtheorem{lem}[thm]{Lemma}
\newtheorem{cor}[thm]{Corollary}
\newtheorem{prop}[thm]{Proposition}
\theoremstyle{definition}
\newtheorem{defn}[thm]{Definition}
\newtheorem{say}[thm]{}
\newtheorem{exmp}[thm]{Example}
\newtheorem{exmps}[thm]{Examples}
\newtheorem{ques}[thm]{Question}    
\newtheorem{procedure}[thm]{Procedure}
\newtheorem*{ack}{Acknowledgments}      
\newtheorem{defn-thm}[thm]{Definition--Theorem}  
\newtheorem{defn-lem}[thm]{Definition--Lemma}  
\newtheorem{ass}[thm]{Assumption}
\theoremstyle{remark}
\newtheorem{claim}[thm]{Claim}
\let \cedilla =\c
\renewcommand{\c}[0]{{\mathbb C}}
\newcommand{\z}[0]{{\mathbb Z}}
\renewcommand{\r}[0]{{\mathbb R}}
\newcommand{\p}[0]{{\mathbb P}}
\newcommand{\q}[0]{{\mathbb Q}}
\newcommand{\map}[0]{\dasharrow}
\newcommand{\qtq}[1]{\quad\mbox{#1}\quad}
\newcommand{\rank}[0]{\operatorname{rank}}
\newcommand{\supp}[0]{\operatorname{Supp}}
\newcommand{\im}[0]{\operatorname{im}}
\newcommand{\sing}[0]{\operatorname{Sing}}
\newcommand{\onto}[0]{\twoheadrightarrow}
\newcommand{\tsum}[0]{\textstyle{\sum}}
\newcommand{\shom}[0]{\operatorname{\mathcal{H}\!\it{om}}}
\newcommand{\reg}[0]{\operatorname{\mathcal{R}}} 
\newcommand{\dist}[0]{\operatorname{dist}} 
\newcommand{\ctr}[0]{\operatorname{ctr}} 
\newcommand{\grass}[0]{\operatorname{Gr}}
\def\into{\DOTSB\lhook\joinrel\to}
\newcommand\vp{\varphi}
\newcommand\la{\lambda}
\newcommand\bR{\mathbb{R}}
\newcommand\cH{\mathcal{H}}
\newcommand\hensp[1]{\enspace\hbox{#1}\enspace} 
\newcommand\wth{\widetilde{H}}
\numberwithin{equation}{section}
\newcommand\glr{Glaeser refinement}
  \newcommand\varep{\varepsilon}
  \newcommand\tvp{\widetilde{\vp}}
\begin{document}
\bibliographystyle{amsalpha}


\title{Continuous linear combinations of polynomials}
\author{Charles Fefferman and J\'anos Koll\'ar}

\maketitle
\tableofcontents

\section{Introduction}
 
Let $f_1,\dots, f_r$ be polynomials or analytic functions on $\r^n$.
Our aim is to consider the following.

\begin{ques} \label{rq1}
 Which continuous functions  $\phi$ can be written in the form
\begin{equation}\label{1.1.eq}
\phi=\tsum_i \phi_if_i
\end{equation}
where the $\phi_i$ are  continuous functions?
Moreover, if $\phi$ has some regularity properties, can we chose the
$\phi_i$ to have the same (or some weaker) regularity properties?

\end{ques}

If the $f_i$ have no common zero, then a partition of unity argument
shows that every $\phi\in C^0(\r^n)$ can be written this way
and the $\phi_i$ have the same regularity properties 
(e.g., being H\"older, Lipschitz or  $C^m$) as
$\phi$.
By Cartan's Theorem B, if $\phi$ is real analytic then the
$\phi_i$ can also be chosen real analytic.

None of these hold if the common zero set $Z:=(f_1=\cdots=f_r=0)$
is not empty. Even if $\phi$ is a polynomial, the best one can say is that
the $\phi_i$ can be chosen to be   H\"older continuous;
see (\ref{counter.exmps}.1).
Thus the interesting aspects happen
 near the common zero set $Z$. 

The much studied $C^{\infty}$-version of Question \ref{rq1}
has a very different flavor  \cite{malgrange, tougeron}, but
the $L^{\infty}_{loc}$-version is  quite relevant:
Which  functions  can be written in the form
$\sum_i \psi_if_i$
where $\psi_i\in L^{\infty}_{loc}$?

The answer to the latter variant turns
out to be rather simple. If $\phi$ is such then 
$\phi/\sum_i |f_i|\in L^{\infty}_{loc}$.
Conversely, if this  holds then
$$
\phi=\sum_i \phi_if_i\qtq{where}\phi_i:= 
\frac{\phi}{\sum_j |f_j|}\cdot  \frac{\bar f_i}{|f_i|}
\in L^{\infty}_{loc}.
$$
Equivalently, the  obvious formulas
\begin{equation}\label{rq1.1}
\sum_i |f_i|=\sum_i  \frac{\bar f_i}{|f_i|}f_i
\qtq{and} \phi=\frac{\phi}{\sum_i |f_i|}\sum_i |f_i|
\end{equation}
show that 
$L^{\infty}_{loc}(\r^n)\cdot(f_1,\dots, f_r)$
is the  principal ideal generated by $\sum_i |f_i|$.
For many purposes it is even better to write $\phi$ as
\begin{equation}\label{rq1.2}
\phi=\sum_i \psi_if_i\qtq{where}\psi_i:= \frac{\phi\bar f_i}{\sum_j |f_j|^2} 
\in L^{\infty}_{loc}.
\end{equation}
Note that if $\phi$ is continuous (resp.\ differentiable) then the $\psi_i$
given in (\ref{rq1.2}) are  continuous  (resp.\ differentiable)
outside the common zero set $Z$; again indicating the special role of $Z$.

The above formulas also show that the
discontinuity of the $\psi_i$ along $Z$ can be removed
for certain functions.

\begin{lem}\label{zero.liit.lem}
 For a  continuous function $\phi$ the following are equivalent.
\begin{enumerate}
\item $\phi=\sum_i \phi_if_i$
where the $\phi_i$ are  continuous functions
such that $\lim_{x\to z} \phi_i=0$ for every $i$ and every $z\in Z$.
\item $\lim_{x\to z}  \frac{\phi}{\sum_i |f_i|}=0$ for  every $z\in Z$.\qed
\end{enumerate}
\end{lem}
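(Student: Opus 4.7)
The plan is to apply the explicit formulas (\ref{rq1.1}) and (\ref{rq1.2}) already recorded above, so there is essentially nothing to do beyond reading off the boundary behavior of the $\phi_i$ at $Z$.

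For $(1)\Rightarrow(2)$, assume $\phi=\sum_i\phi_if_i$ with the stated limit property. The triangle inequality yields
$$
|\phi(x)|\;\le\;\sum_i|\phi_i(x)|\,|f_i(x)|\;\le\;\bigl(\max_i|\phi_i(x)|\bigr)\sum_i|f_i(x)|,
$$
so $|\phi(x)|/\sum_i|f_i(x)|\le \max_i|\phi_i(x)|$ for $x\notin Z$, and the right side tends to $0$ as $x\to z\in Z$ by hypothesis.

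For $(2)\Rightarrow(1)$, I would invoke formula (\ref{rq1.2}) and set
$$
\phi_i(x)\;:=\;\frac{\phi(x)\,\overline{f_i(x)}}{\sum_j|f_j(x)|^2}\quad\text{for }x\notin Z,\qquad \phi_i(z):=0\quad\text{for }z\in Z.
$$
Outside $Z$ each $\phi_i$ is continuous and the identity $\sum_i\phi_if_i=\phi$ is immediate; on $Z$ both sides vanish, since hypothesis (2) together with continuity of $\phi$ forces $\phi|_Z\equiv 0$ (write $\phi=(\phi/\sum_j|f_j|)\cdot\sum_j|f_j|$ as a product of two functions with limit $0$ at $z$). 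The remaining point is continuity of $\phi_i$ at a point $z\in Z$ with limiting value $0$. Here I would use the elementary inequality
$$
|f_i(x)|\sum_j|f_j(x)|\;\le\;\Bigl(\sum_j|f_j(x)|\Bigr)^{\!2}\;\le\;r\sum_j|f_j(x)|^2
$$
(Cauchy--Schwarz applied to $(|f_j(x)|)_j$ and $(1,\dots,1)$), which yields
$$
|\phi_i(x)|\;\le\;\frac{r\,|\phi(x)|}{\sum_j|f_j(x)|}\;\longrightarrow\;0\qquad(x\to z\in Z),
$$
by hypothesis (2).

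The only non-cosmetic point is this last step: a priori one might worry that the denominator $\sum_j|f_j|^2$ in (\ref{rq1.2}) vanishes too fast near $Z$ and makes $\phi_i$ blow up. The Cauchy--Schwarz bound above is precisely what converts the hypothesis of first-order vanishing of $\phi$ relative to $\sum_j|f_j|$ into the desired vanishing of each $\phi_i$ at $Z$, and thereby closes the argument.
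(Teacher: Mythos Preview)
Your proof is correct and follows exactly the approach the paper intends: the lemma is stated with a \qed\ immediately after it, the paper having just remarked that ``the above formulas also show that the discontinuity of the $\psi_i$ along $Z$ can be removed for certain functions,'' i.e., formula~(\ref{rq1.2}) is the construction for $(2)\Rightarrow(1)$. You have simply supplied the routine estimates (triangle inequality for one direction, Cauchy--Schwarz for the other) that the paper leaves implicit.
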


Similar  conditions do not answer
Question \ref{rq1}.
First, if the $\psi_i$ defined in (\ref{rq1.2}) are  continuous,
then $\phi=\sum_i \psi_if_i$ is  continuous,
but frequently one can write $\phi=\sum_i \phi_if_i$
with $\phi_i$  continuous yet the formula
(\ref{rq1.2}) defines discontinuous functions $\psi_i$.
This happens already in very simple examples, like
$f_1=x, f_2=y$.  For $\phi=x$  (\ref{rq1.2}) gives
$$
x=\frac{x^2}{x^2+y^2}\cdot x+\frac{x y}{x^2+y^2}\cdot y
$$
whose coefficients are  discontinuous at the origin.

An even worse example is given by $f_1=x^2, f_2=y^2$ and $\phi=xy$.
Here $\phi$ can not be written as
$\phi=\phi_1f_1+\phi_2f_2$ but every inequality
 that is satisfied by $x^2$ and $y^2$ is also satisfied by
$\phi=xy$. We believe that there is no universal test or formula as
above  that answers Question \ref{rq1}.
At least it is clear that $C^0(\r^n)\cdot (x,y)$ is not a principal ideal
in $C^0(\r^n)$.

Nonetheless, these examples 
and the concept of axis closure defined by \cite{brenner}
suggest several simple necessary conditions.
These turn out to be  equivalent to each other, but
they do not settle Question \ref{rq1}.

 The algebraic version of Question \ref{rq1} was
posed by H.~Brenner, which led him to the notion
of the {\it continuous closure} of ideals  \cite{brenner}. 
We learned about it from a lecture of M.~Hochster.
It seems to us that the continuous version is the
more basic variant. In turn, the methods of  the
 continuous case can be used to settle several of the
algebraic problems \cite{k-hoch}.

\begin{say}[Pointwise tests]\label{rpwt}
For a continuous function $\phi$ and for a point  $p\in \r^n$
the following are equivalent.
\begin{enumerate}
\item For every sequence $\{x_j\}$ converging to $p$ there are
 $\psi_{ij}\in \c $ such that 
$\lim_{j\to \infty} \psi_{ij}$ 
exists for every $i$ and
$\phi(x_j)=\sum_i \psi_{ij}f_i(x_j)$  for every~$j$.
\item  We can write
$\phi=\sum_i \psi_i^{(p)}f_i$ where the $\psi_i^{(p)}(x)$ are continuous  at $p$.
\item  We can write
$\phi=\phi^{(p)}+\tsum_i c_i^{(p)} f_i$ where $c_i^{(p)}\in \c$ and
$\lim_{x\to p} \frac{\phi^{(p)}}{\sum_i |f_i|}=0$.
\end{enumerate}

 If $\phi=\tsum_i \phi_if_i$
where the $\phi_i$ are  continuous functions,
then we obtain the $\psi_{ij}, \psi_i^{(p)}$ 
by restriction and
$\phi=\bigl(\tsum_i (\phi_i-\phi_i(p))f_i\bigr)+\tsum_i \phi_i(p)f_i$
shows that $\phi$ satisfies the third test.
Conversely, if $\phi$  satisfies (3)  then
$\phi_p:=\phi-\sum_i c_i^{(p)} f_i$ is continuous and 
$\lim_{x\to p} \frac{\phi_p}{\sum_i |f_i|}=0$. 
By Lemma \ref{zero.liit.lem} we can write
$$
\phi=\sum_i \psi_i^{(p)}f_i\qtq{where}\psi_i^{(p)}:=
c_i^{(p)}+\frac{\phi_p\bar f_i}{ \sum_j |f_j|^2}
$$
and the $\psi_i^{(p)}(x)$ are continuous at $p$.
Thus (2) and (3)  are equivalent.
One can see their equivalence with (1) directly, but for us it is more
natural to obtain it by showing that they are all
  equivalent to the Finite set test to be introduced in
 (\ref{fin.set.test.defn}).

If the common zero set
$Z:=(f_1=\cdots=f_r=0)$ consists of a single point $p$, then
the   $\psi_i^{(p)}(x)$ constructed above 
are continuous everywhere. More generally,
if $Z$ is a finite set of points then 
these tests give  necessary and  sufficient conditions
for Question \ref{rq1}.
However, the following example of Hochster shows that
the  pointwise test for every $p$ does not give a  sufficient
condition in general.
\medskip

{\it \ref{rpwt}.4 Example.} \cite{hochster}
  Take $\{f_1, f_2, f_3\}:= \{x^2,y^2, xyz^2\}$ and $\phi:=xyz$.

Pick a point $p=(a,b,c)\in \r^3$.
If $c\neq 0$ then we can write
$$
xyz= \tfrac1{c}xyz^2+ \tfrac1{c}(c-z)xyz
\qtq{and}
\lim_{(x,y,z)\to (a,b,c)}\frac{(c-z)xyz}{|x^2|+|y^2|+| xyz^2|}=0,
$$
thus (\ref{rpwt}.3) holds.
Note that if $a=b=0$, then
$\frac1{c}xyz^2$  is the only possible constant coefficient
term that works. As $c\to 0$, the coefficient  $\frac1{c}$ is
not continuous, thus $xyz$ can not be written as
$xyz=\phi_1x^2+\phi_2y^2+\phi_3xyz^2$ where the $\phi_i$ are continuous.
Nonetheless,
 if $c=0$ then
$$
\lim_{(x,y,z)\to (a,b,0)}\frac{xyz}{|x^2|+|y^2|+| xyz^2|}=0.
$$
shows that (\ref{rpwt}.3) is satisfied (with all $c_i^{(a,b,0)}=0$).

One problem  is that the coefficients $c_i^{(p)}$ are
not continuous functions of $p$. In general, they are not even functions of
$p$ since a representation as in (\ref{rpwt}.2) or 
 (\ref{rpwt}.3) is not unique.
Still, this suggests a possibility of  reducing  Question \ref{rq1} to a similar
problem on the lower dimensional set $Z=(f_1=\cdots=f_r=0)$. 
\end{say}

We present two methods to answer   Question \ref{rq1}.

The first method starts with $f_1,\dots, f_r$ and $\phi$
and decides if $\phi=\sum_i \phi_if_i$ is solvable or not.
The union of the graphs of all discontinuous solutions
$(\phi_1,\dots, \phi_r)$ is a subset 
${\mathcal H}\subset \r^n\times \r^r$.
Then we use the tests  (\ref{rpwt}.1--3) repeatedly to get
smaller and smaller subsets of ${\mathcal H}$. 
After $2r+1$ steps, this process stabilizes.
This follows  \cite[Lem.2.2]{Fef06}. It  was
 adapted from a lemma in \cite{BMP03}, which in turn was adapted from a lemma
 in \cite{Gla58}. At the end we
 use Michael's theorem \cite{Mic56} to get a necessary and sufficient criterion.
This approach works even if the $f_i$ are continuous functions.
On the other hand,  its dependence on $\phi$ is somewhat delicate.

The second method uses in an essential way that the $f_i$ are
polynomials (or at least real analytic).
The method relies on the observation that formulas like
(\ref{rq1.1}--\ref{rq1.2}) give a continuous solution to
$\phi=\sum_i \phi_if_i$; albeit not on $\r^n$
but on some real algebraic variety  mapping to $\r^n$.
Following this idea, we transform the original
 Question \ref{rq1} on $\r^n$ to a similar problem
on a real algebraic variety $Y$ for which the
 solvability on any finite subset is equivalent to
continuous solvability.

The algebraic method also shows that if $\phi$ is
 H\"older continuous (resp.\ semialgebraic and  continuous) and  the 
 equation (\ref{1.1.eq}) has a
continuous solution then there is also a solution where
the  $\phi_i$ are  H\"older continuous (resp.\ semialgebraic and  continuous)
 (\ref{r.main.cors}).
By contrast, if can happen that $\phi$ is a
continuous rational function on $\r^3$,
the equation (\ref{1.1.eq}) has a continuous   semialgebraic solution
but has no continuous  rational solutions
\cite{k-rat-func}.

Both of the methods work for any linear system of equations
\begin{equation}\label{1.1.eq-syst}
\phi_j=\tsum_{i=1}^m \psi_if_{ij} \qtq{for $j=1,\dots, n$.}
\end{equation}

\section{The Glaeser--Michael method}

 Fix positive integers 
$n$, $r$ and let $Q$ be a compact metric space.

\begin{say}[Singular affine bundles]\label{thfi}
By a singular affine bundle (or {\it bundle} for short),
 we mean a family $\cH=(H_x)_{x\in Q}$ of 
affine subspaces $H_x\subseteq \bR^r$, parametrized by the points $x\in Q$. 
 The affine subspaces $H_x$ are the {\it fibers} of the bundle $\cH$.  
  (Here, we allow the empty set $\emptyset$ and the whole space $\bR^r$ as
 affine subspaces of $\bR^r$.)  A {\it section} of a bundle 
$\cH=(H_x)_{x\in Q}$ is a continuous map $f:Q\to\bR^r$ such that
 $f(x)\in H_x$ for each $x\in Q$.  We ask:
\begin{equation}\label{thfi.1}
\hbox{How can we tell whether a given bundle of $\cH$ has a section?}
\end{equation}
For instance, let $f_1,\dots,f_r$ and $\vp$ be given real-valued functions
 on $Q$.  For $x\in Q$, we take
\begin{equation}\label{thfi.2}
H_x=\{(\la_1,\dots,\la_r)\in\bR^r:\la_1f_1(x)+\dots+ \la_rf_r(x)=\vp(x)\}.
\end{equation}
Then a section $(\phi_1,\dots,\phi_r)$ of the bundle (\ref{thfi.2}) is
 precisely an $r$-tuple of continuous functions solving the equation
\begin{equation}\label{thfi.3}
\phi_1f_1+\dots+ \phi_rf_r=\vp\hensp{on} Q.
\end{equation}
To answer question (\ref{thfi.1}), we introduce the notion of
 ``Glaeser refinement''.  (Compare with \cite{Gla58},
 \cite{BMP03}, \cite{Fef06}.)  Let $\cH=(H_x)_{x\in Q}$ be a bundle. 
 Then the {\it Glaeser refinement} of $\cH$ is the bundle
 $\cH'=(H'_x)_{x\in Q}$, where, for each $x\in Q$, 
\begin{equation}\label{thfi.4}
H'_x=\{\la\in H_x:\dist (\la,H_y)\to 0\hensp{as}
 y\to x\quad (y\in Q)\}.
\end{equation}
One checks easily that
\begin{equation}\label{thfi.5}
\cH'\hbox{ is a {\it subbundle} of $\cH$, i.e., $H'_x\subseteq H_x$ 
for each $x\in Q$}
\end{equation}
and
\begin{equation}\label{thfi.6}
\hbox{the bundles $\cH$ and $\cH'$ have the same sections.}
\end{equation}

Starting from a given bundle $\cH$, and iterating the above construction,
 we obtain a sequence of bundles $\cH^0,\cH^1,\cH^2,\cdots$, where
 $\cH^0=\cH$, and $\cH^{i+1}$ is the Glaeser refinement of $\cH^i$ for
 each $i$.  In particular, $\cH^{i+1}$ is a subbundle of $\cH^i$, and
 all the bundles $\cH^i$ have the same sections.
\end{say}

We will prove the following results.

\begin{lem}[Stabilization Lemma]\label{35.7}
$\cH^{2r+1}=\cH^{2r+2}=\cdots$
\end{lem}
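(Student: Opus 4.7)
My plan consists of three stages: an affine structure lemma, a ``cascade'' observation, and a descent argument.

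\medskip

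\textbf{Affine structure.} First, by induction on $i$ I would verify that every $H^i_x$ is an affine subspace of $\bR^r$ (allowing $\emptyset$ and $\bR^r$). Given $\la_1, \la_2 \in H^{i+1}_x$ and $t \in \bR$, for each $y \to x$ pick $\mu_j(y) \in H^i_y$ with $|\mu_j(y) - \la_j| \to 0$; by the inductive hypothesis $t\mu_1(y) + (1-t)\mu_2(y) \in H^i_y$ and this converges to $t\la_1 + (1-t)\la_2$, placing the affine combination in $H^{i+1}_x$. Writing $d_i(x) := \dim H^i_x$ with $\dim \emptyset := -1$, the affine containment $H^{i+1}_x \subseteq H^i_x$ then yields the crucial dichotomy: either $H^{i+1}_x = H^i_x$, or $d_{i+1}(x) \le d_i(x) - 1$.

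\medskip

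\textbf{Cascade.} The central observation is: \emph{if $H^i_x \ne H^{i-1}_x$ with $i \ge 2$, then every neighborhood of $x$ contains some $y$ with $H^{i-1}_y \ne H^{i-2}_y$.} Indeed, if instead $H^{i-1}_y = H^{i-2}_y$ on some open neighborhood $U$ of $x$, then any $\la \in H^{i-1}_x = (H^{i-2})'_x$ has $\dist(\la, H^{i-1}_y) = \dist(\la, H^{i-2}_y) \to 0$ as $y \to x$ in $U$, so $\la \in (H^{i-1})'_x = H^i_x$, forcing $H^i_x = H^{i-1}_x$ and contradicting the hypothesis. Thus strict refinement at step $i$ propagates to strict refinement at step $i-1$ in every neighborhood.

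\medskip

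\textbf{Descent and counting.} If $\cH^{2r+2} \ne \cH^{2r+1}$, start at an $x_0$ with $H^{2r+2}_{x_0} \ne H^{2r+1}_{x_0}$ and iterate the cascade to obtain $x_0, x_1, \ldots, x_{2r+1}$ with $x_k$ within $2^{-k}$ of $x_{k-1}$ and $H^{2r+2-k}_{x_k} \ne H^{2r+1-k}_{x_k}$ for each $k$. The sequence is Cauchy and converges to some $y^* \in Q$. Combining the resulting strict dimension drops with the Painlev\'e--Kuratowski inclusion $H^{i+1}_{y^*} \subseteq \liminf_{y \to y^*} H^i_y$ translates each cascade event into a unit decrease in either $d_i(y^*)$ itself or in the generic dimension $\bar d_i(y^*) := \limsup_{y \to y^*,\, y \ne y^*} d_i(y)$, both valued in $\{-1, 0, \ldots, r\}$ and hence admitting at most $r + 1$ strict drops apiece, for a joint capacity of $2r+2$ which cannot accommodate both the $2r+1$ derived drops and the initial drop at $x_0$, yielding the required contradiction. \textbf{The main obstacle} is precisely this attribution: each cascade drop must be charged unambiguously against one of the two channels without double-counting, using the fact that affine subspaces of uniformly bounded dimension converge in the Painlev\'e--Kuratowski sense to an affine subspace whose dimension is controlled by $\bar d_i(y^*)$. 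The sharp bound $2r+1$ (rather than the naive $r+1$) reflects that the pointwise dimension at $y^*$ and the generic dimension near $y^*$ are independent quantities, each contributing $r+1$ units of refinement capacity.
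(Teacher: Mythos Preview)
Your cascade observation is correct and is essentially the contrapositive of what the paper uses. The gap is in the third stage, where the argument is not actually carried out and, as you yourself flag, the attribution step is the whole difficulty.

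Two concrete problems. First, the arithmetic does not give a contradiction: you claim $2r+1$ derived drops plus the initial drop, for $2r+2$ total, against a ``joint capacity'' of $2r+2$. That is not a contradiction; you would need strictly more drops than capacity. Second, and more seriously, the drops you have produced occur at \emph{different} points $x_0,\dots,x_{2r+1}$, one drop per level, while your two ``channels'' $d_i(y^*)$ and $\bar d_i(y^*)$ live at the single limit point $y^*$. You give no mechanism for converting a drop $d_{j+1}(x_k)<d_j(x_k)$ at $x_k$ into a drop of either $i\mapsto d_i(y^*)$ or $i\mapsto \bar d_i(y^*)$. Knowing a strict inequality at one point of a sequence tells you nothing about a $\limsup$ along that sequence, and the natural semicontinuity coming from the Glaeser refinement is
\[
d_{i+1}(x)\ \le\ \liminf_{y\to x}\, d_i(y),
\]
so $\liminf$, not $\limsup$, is the quantity that interacts well with refinement. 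Even with $\liminf$ in place of $\limsup$, a single witness $x_k$ near $y^*$ does not move the $\liminf$.

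The paper avoids this by arguing locally rather than by passing to a limit point. It proves, by induction on $k$, the claim: if $\dim H^{2k+1}_x\ge r-k$ then $H^\ell_x=H^{2k+1}_x$ for all $\ell\ge 2k+1$. The inductive step combines your cascade idea with the inequality displayed above: if stabilization has not occurred by step $2k+1$ at some nearby $y$, the inductive hypothesis forces $\dim H^{2k+1}_y\le r-k-1$, and then one more refinement together with the $\liminf$ inequality pushes $\dim H^{2k+3}_x$ below $r-k-1$, contradicting the hypothesis. Taking $k=r$ gives the lemma. This is where the factor of $2$ comes from cleanly: one refinement is spent forcing the nearby dimension down (via the inductive hypothesis), and a second refinement transports that drop to $x$ via the $\liminf$ bound. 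Your convergent-sequence framework does not isolate this two-step mechanism, which is why the bookkeeping at $y^*$ cannot be completed.
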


\begin{lem}[Existence of Sections]\label{35.8} Let $\cH=(H_x)_{x\in Q}$ be a 
bundle.  Suppose that $\cH$ is its own Glaeser refinement, and suppose each 
fiber $H_x$ is non-empty.  Then $\cH$ has a section.
\end{lem}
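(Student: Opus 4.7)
The plan is to reduce the statement to Michael's continuous selection theorem, which asserts that if $X$ is paracompact (a compact metric space such as $Q$ qualifies), $Y$ is a Banach space (here $\bR^r$), and $F:X\to 2^Y$ is a lower semicontinuous set-valued map with non-empty, closed, convex values, then $F$ admits a continuous selection $f:X\to Y$ with $f(x)\in F(x)$. Such a selection is exactly a section of $\cH$, so the lemma will follow once the hypotheses of Michael's theorem are verified for the set-valued map $x\mapsto H_x$.

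Three properties must be checked. Non-emptiness of every $H_x$ is exactly the standing assumption of the lemma. Each $H_x$ is an affine subspace of $\bR^r$ (possibly $\bR^r$ itself), hence automatically closed and convex in $\bR^r$. The only substantive condition to verify is therefore lower semicontinuity: for every open $U\subseteq\bR^r$, the set $\{x\in Q:H_x\cap U\neq\emptyset\}$ is open in $Q$.

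The key step is this lower semicontinuity condition, and this is exactly what Glaeser-stability is designed to give. Fix $x_0\in Q$ with $H_{x_0}\cap U\neq\emptyset$, pick $\la\in H_{x_0}\cap U$, and choose $\varep>0$ so that the open ball $B(\la,\varep)$ lies in $U$. Since by assumption $\cH$ equals its own Glaeser refinement, the defining condition (\ref{thfi.4}) gives $\la\in H'_{x_0}$, that is, $\dist(\la,H_y)\to 0$ as $y\to x_0$ in $Q$. Consequently, for $y$ in some neighborhood $V$ of $x_0$, there exists $\mu_y\in H_y$ with $|\mu_y-\la|<\varep$, so $\mu_y\in U$, witnessing $H_y\cap U\neq\emptyset$ throughout $V$. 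This is exactly lower semicontinuity.

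The main obstacle is essentially conceptual rather than technical: one must identify the Glaeser refinement condition with classical lower semicontinuity of a set-valued map, after which Michael's theorem supplies a continuous selection $f:Q\to\bR^r$ with $f(x)\in H_x$ for every $x\in Q$. This $f$ is the desired section, completing the proof.
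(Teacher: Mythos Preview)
Your proof is correct. The paper itself explicitly mentions this route---it notes that ``one can easily prove (\ref{35.8}) using Michael's theorem''---and your identification of Glaeser-stability with lower semicontinuity of the set-valued map $x\mapsto H_x$ is exactly the point. The paper, however, opts to give a self-contained argument: rather than invoking Michael's theorem as a black box, it reproves the relevant special case directly via an iterative approximation scheme (partitions of unity produce approximate sections $g$ with $\|\cH-g\|\le\tfrac12\|\cH\|$, and iterating gives a uniformly Cauchy sequence converging to a genuine section). Your approach is shorter and conceptually cleaner if Michael's theorem is taken as known; the paper's approach is self-contained and makes the mechanism of the selection explicit, which feeds into the later constructive computation of sections in Section~\ref{sec.3}.
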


The above results allow us to answer question (\ref{thfi.1}).  Let $\cH$ 
be a bundle, let $\cH^0,\cH^1,\cH^2,\cdots$ be its iterated Glaeser
 refinements, and let $\cH^{2r+1}=(\widetilde H_x)_{x\in Q}$.  Then 
$\cH$ has a section if and only if each fiber $\wth_x$ is non-empty.

The bundle (\ref{thfi.2}) provides an interesting example.  One checks 
that its Glaeser refinement is given by $\cH^1=(H^1_x)_{x\in Q}$, where 
$$
H^1_x=\Big\{(\la_1,\dots,\la_r)\in \bR^r:
\big| \tsum^r_1\la_if_i(y)-\vp(y)\big| =o
\big(\tsum^r_1|f_i(y)|\big) \hensp{as} y \to x\Big\}.
$$
Thus, the necessary condition (\ref{rpwt})  for the existence of continuous 
solutions of (\ref{thfi.3})  asserts precisely that the fibers 
$H^1_x$ are all non-empty.

In Hochster's example (\ref{rpwt}.4),
 the equation (\ref{thfi.3}) has no continuous solutions, because the 
second Glaeser refinement $\cH^2 = (H^2_x)_{x\in Q}$ has an empty fiber, 
namely $H^2_0$.

We present self-contained proofs of (\ref{35.7}) and (\ref{35.8}), for the
 reader's 
convenience.  A terse discussion would simply note that the proof of
 \cite[Lem.2.2]{Fef06} also yields (\ref{35.7}), and that one can easily prove 
(\ref{35.8})
 using Michael's theorem \cite{Mic56}, \cite{BL00}.

\begin{say}[Proof of the Stabilization Lemma]\label{psl}
Let $\cH^0,\cH^1,\cH^2,\cdots$ be the iterated \glr s of $\cH$; and 
let $\cH^i=(H^i_x)_{x\in Q}$ for each $i$.

We must show that $H^\ell_x=H^{2r+1}_x$ for all $x\in Q$, $\ell\geq 2r+1$. 
 If $H^{2r+1}_x=\emptyset$, then the desired result is obvious.  

For non-empty $H^{2r+1}_x$, it follows at once from the following.
\medskip

{\it Claim \ref{psl}.1$_k$. Let $x\in Q$.  If $\dim H^{2k+1}_x\geq r-k$, 
then $H^\ell_x=H^{2k+1}_x$ for all $\ell \geq 2k+1$.}
\medskip

We prove (\ref{psl}.1$_k$) for all $k\geq 0$, by induction on $k$. 
 In the case $k=0$, (\ref{psl}.1$_k$) asserts that
\begin{equation}\label{psl.2}
\hbox{If $H^1_x=\bR^r$, then $H^\ell_x=\bR^r$ for all $\ell\geq 1$.}
\end{equation}
By definition of \glr, we have
\begin{equation}\label{psl.3}
\dim H^{\ell+1}_x\leq \liminf_{y\to x}\dim H^\ell_y.
\end{equation}
Hence, if $H^1_x=\bR^r$, then $H^0_y=\bR^r$ for all $y$ in a neighborhood of $x$. Consequently, $H^\ell_y=\bR^r$ for all $y$ in a neighborhood of $x$, and for all $\ell \geq 0$.  This proves (\ref{psl}.1$_k$) in the base case $k=0$.
For the induction step, we fix $k$ and assume (\ref{psl}.1$_k$)
 for all $x\in Q$.  We will prove (\ref{psl}.1$_{k+1}$).  We must show that
\begin{equation}\label{psl.4}
\hbox{If $\dim H^{2k+3}_{x} \geq r-k-1$, then $H^\ell_x=H^{2k+3}_x$ for all
 $\ell\geq 2k+3$.}
\end{equation}
If $\dim H^{2k+1}_x\geq r-k$, then (\ref{psl.4}) follows at once from 
(\ref{psl}.1$_k$).  Hence, in proving (\ref{psl.4}),
 we may assume that $\dim H^{2k+1}_x\leq r-k-1$.  Thus,
\begin{equation}\label{psl.5}
\dim H^{2k+1}_x=\dim H^{2k+2}_x = \dim H^{2k+3}_x = r-k-1.
\end{equation}
We now show that
\begin{equation}\label{psl.6}
H^{2k+2}_y = H^{2k+1}_1 \hensp{for all $y$ near enough to} x.
\end{equation}
If fact, suppose that (\ref{psl.6}) fails, i.e., suppose that
\begin{equation}\label{psl.7}
\dim H^{2k+2}_y \leq \dim H^{2k+1}_y-1\hensp{for y arbitrarily close to} x.
\end{equation}
For $y$ as in (\ref{psl.7}), our inductive assumption (\ref{psl}.1$_k$)
 shows that $\dim H^{2k+1}_y \allowbreak\leq r-k-1$.  Therefore, for $y$ 
arbitrarily near $x$, we have
\[
\dim H^{2k+2}_y \leq \dim H^{2k+1}_y-1\leq r-k-2.\]
Another application  of (\ref{psl.3}) now yields $\dim H^{2k+3}_x\leq r-k-2$, 
contradicting (\ref{psl.5}).  Thus, (\ref{psl.6}) cannot fail.

From (\ref{psl.6}) we see easily that $H^\ell_y=H^{2k+3}_y$ for all $y$
 near enough to $x$, and for all $\ell\geq 2k+3$.

This completes the inductive step (\ref{psl.4}), and proves the
 Stabilization Lemma.\qed
\end{say}

\begin{say}[Proof of Existence of Sections]\label{pes}
We give the standard proof of Michael's theorem in the relevant special case. 
 We start with a few definitions.  If $H\subset \bR^r$ is an affine subspace
 and $v\in \bR^r$ is a vector, then $H-v$ denotes the translate
 $\{w-v:w\in H\}$.  If $\cH=(H_x)_{x\in Q}$ is a bundle, and if $f:Q\to \bR^r$
 is a continuous map, then $\cH-f$ denotes the bundle $(H_x-f(x))_{x\in Q}$. 
 Note that if $\cH$ is its own \glr\ and has non-empty fibers, then the same
 is true of $\cH-f$.

Let $\cH=(H_x)_{x\in Q}$ be any bundle with non-empty fibers.  We define the 
norm $\|\cH\|: = \sup_{x\in Q}\dist(0,H_x)$.  Thus, $\|\cH\|$ is a
 non-negative real number or $+\infty$.

Now suppose that $\cH=(H_x)_{x\in Q}$ is a bundle with non-empty fibers, and
 suppose that $\cH$ is its own \glr.

\begin{prop} $\|\cH\|<+\infty$.\end{prop}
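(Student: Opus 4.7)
The plan is to argue by contradiction and exploit compactness of $Q$ together with the defining property of being a Glaeser refinement. Assume $\|\cH\| = +\infty$. Then there is a sequence $(x_n)$ in $Q$ with $\dist(0, H_{x_n}) \to \infty$. Since $Q$ is a compact metric space, after passing to a subsequence we may assume $x_n \to x^\ast$ for some $x^\ast \in Q$.

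Next I would invoke the hypothesis that the fibers of $\cH$ are non-empty: pick any $\la^\ast \in H_{x^\ast}$. Because $\cH$ is its own Glaeser refinement, the condition (\ref{thfi.4}) applied at $x^\ast$ and to $\la^\ast \in H_{x^\ast}$ gives
\[
\dist(\la^\ast, H_y) \to 0 \hensp{as} y \to x^\ast, \ y \in Q.
\]
In particular, $\dist(\la^\ast, H_{x_n}) \to 0$ along the chosen subsequence.

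The triangle inequality for distance to an affine subspace then yields
\[
\dist(0, H_{x_n}) \leq \|\la^\ast\| + \dist(\la^\ast, H_{x_n}),
\]
and the right-hand side is bounded as $n \to \infty$. This contradicts $\dist(0, H_{x_n}) \to \infty$, so $\|\cH\| < +\infty$.

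There is no real obstacle here: the argument is essentially a one-line contradiction once one notices that the two hypotheses (non-empty fibers and Glaeser stability) together provide, near any accumulation point $x^\ast$, a uniform bound on $\dist(0, H_y)$ coming from any fixed element $\la^\ast \in H_{x^\ast}$; compactness of $Q$ then upgrades the local bound to a global one.
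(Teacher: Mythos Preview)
Your proof is correct and follows essentially the same approach as the paper: both pick a point in a fiber, use Glaeser stability to bound $\dist(\cdot,H_y)$ for nearby $y$, apply the triangle inequality, and invoke compactness of $Q$. The only cosmetic difference is that the paper argues directly via a finite subcover while you argue by contradiction via sequential compactness.
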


\begin{proof} Given $x\in Q$, we can pick $w_x\in H_x$ since $H_x$ is non-empty.  Also, $\dist (w_x,H_y)\to 0$ as $y\to x$ $(y\in Q)$, since $\cH$ is its own \glr.  Hence, there exists an open ball $B_x$ centered at $x$, such that $\dist(w_x,H_y)\leq 1$ for all $y\in Q\cap B_x$.  If follows that $\dist (0,H_y)\leq |w_x|+1$ for all $y\in Q\cap B_x$.  We can cover the compact space $Q$ by finitely many of the open balls $B_x$ $(x\in Q)$; say, 
\[
Q\subset B_{x_1}\cup B_{x_2}\cup\dots\cup B_{x_N}.\]
Since $\dist(0,H_y)\leq |w_{x_i}|+1$ for all $y\in Q\cap B_{x_i}$, it follows that
\[
\dist(0,H_y)\leq \max\{|w_{x_i}|+1:i=1,2,\dots,N\}\hensp{for all}y\in Q.\]
Thus $\|\cH\|<+\infty$.\end{proof}

\begin{prop} \label{thni}
Given $\varepsilon>0$, there exists a continuous map $g:Q\to \bR^r$ such that
$$
\label{thni.on}
\dist(g(y),H_y)\leq \varepsilon\hensp{for all}y\in Q,$$
and
$$
\label{thni.tw}
|g(y)|\leq \|\cH\|+\varepsilon\hensp{for all}y\in Q.$$\end{prop}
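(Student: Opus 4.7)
The plan is a standard partition-of-unity gluing in the spirit of Michael's selection theorem, exploiting two features of the setup: each fiber $H_y$ is a nonempty closed affine subspace (so convex combinations of points of $H_y$ again lie in $H_y$), and the assumption that $\cH$ equals its own \glr\ means a vector chosen in one fiber is, locally, nearly in every nearby fiber.

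Concretely, I would proceed as follows. First, for each $x\in Q$, pick $w_x\in H_x$ with $|w_x|\leq \|\cH\|+\varepsilon$; this is possible because $H_x$ is a nonempty closed affine subspace and $\dist(0,H_x)\leq\|\cH\|<+\infty$ by the preceding proposition. Second, since $\cH$ is its own \glr, $\dist(w_x,H_y)\to 0$ as $y\to x$ in $Q$, so there is an open ball $B_x$ centered at $x$ with $\dist(w_x,H_y)\leq\varepsilon$ for every $y\in Q\cap B_x$. Third, extract from $\{B_x:x\in Q\}$ a finite subcover $B_{x_1},\dots,B_{x_N}$ of $Q$ by compactness, and fix a continuous partition of unity $\{\theta_i\}_{i=1}^N$ on $Q$ subordinate to $\{B_{x_i}\cap Q\}$. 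Finally, set
\[
g(y)\;:=\;\sum_{i=1}^N \theta_i(y)\,w_{x_i}.
\]

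To verify the two bounds, observe first that since $\theta_i\geq 0$ and $\sum_i\theta_i(y)=1$, the point $g(y)$ is a convex combination of the $w_{x_i}$, whence $|g(y)|\leq\max_i|w_{x_i}|\leq\|\cH\|+\varepsilon$. For the distance estimate, fix $y\in Q$; for each index $i$ with $\theta_i(y)>0$ we have $y\in B_{x_i}$, so there exists $v_i\in H_y$ with $|w_{x_i}-v_i|\leq\varepsilon$ (the nearest point to $w_{x_i}$ in $H_y$ exists because $H_y$ is a nonempty closed affine subspace). Then
\[
v\;:=\;\sum_{i:\theta_i(y)>0}\theta_i(y)\,v_i
\]
lies in $H_y$, because $H_y$ is affine and the weights $\theta_i(y)$ sum to $1$, and moreover
\[
|g(y)-v|\;\leq\;\sum_i\theta_i(y)\,|w_{x_i}-v_i|\;\leq\;\varepsilon,
\]
giving $\dist(g(y),H_y)\leq\varepsilon$.

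There is no serious obstacle once Proposition \ref{thni} is granted $\|\cH\|<+\infty$ from the preceding result; the one conceptual point to keep in mind is that a convex combination of vectors each close to $H_y$ is itself close to $H_y$, rather than drifting farther away, and this is precisely what the affine structure of the fibers guarantees.
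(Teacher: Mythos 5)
Your proof is correct and follows essentially the same route as the paper: choose near-shortest vectors $w_x\in H_x$, use Glaeser stability plus compactness to get a finite cover on which each $w_{x_i}$ is $\varepsilon$-close to all nearby fibers, and glue with a partition of unity. Your explicit verification that a convex combination of points $\varepsilon$-close to the affine fiber $H_y$ is itself $\varepsilon$-close is exactly the content of the paper's one-line inequality $\dist(g(y),H_y)\leq\sum_i\dist(w_{x_i},H_y)\vp_i(y)$.
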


\begin{proof}
Given $x\in Q$, we can find $w_x\in H_x$ such that $|w_x|\leq \|\cH\|+\varepsilon$.  We know that $\dist(w_x,H_y)\to 0$ as $y\to x$ ($y\in Q$), 
since $\cH$ is its own \glr.  Hence, there exists an open ball $B(x,2r_x)$ 
centered at $x$, such that
$$\label{thni.th}
\dist (w_x,H_y)<\varep\hensp{for all}y\in Q\cap B(x,2r_x).$$
The  compact space $Q$ may be covered by finitely many of the open balls $B(x,r_x)$ ($x\in Q$); say
$$\label{thni.fo}
Q\subset B(x_1,r_{x_1})\cup\dots\cup B(x_N,r_{x_N}).$$
For each $i=1,\dots,N$, we introduce a non-negative continuous function $\widetilde{\vp}_i$ on $\bR^n$, supported in $B(x_i,2r_{x_i})$ and equal to one on $B(x_i,r_{x_i})$.  We then define
$\vp_i(x)=\tvp_i(x)/ (\tvp_1(x)+\dots+\tvp_N(x))$ for $i=1,\dots,N$ and $x\in Q$.
(This makes sense, thanks for \eqref{thni.fo}.)

The $\vp_i$ form a partition of unity on $Q$:
\begin{itemize}
\item Each $\vp_i$ is a non-negative continuous function on $Q$, equal to zero outside $Q\cap B(x_i,2r_{x_i})$; and
\item $\sum^N_{i=1}\vp_i=1$ on  $Q$.
\end{itemize}
We define
\[
g(y)=\sum^N_{i=1}w_{x_i}\vp_i(y)\hensp{for} y\in Q.\]
Thus, $g$ is a continuous map from $Q$ into $\bR^r$.  Moreover, \eqref{thni.th} shows that  $\dist(w_{x_i},H_y)\leq \varep$ whenever $\vp_i(y)\ne 0$.  Therefore,
\begin{align*}
\dist (g(y),H_y)&\leq \sum^N_{i=1}\dist(w_{x_i}, H_y)\vp_i(y)\\
&\leq \varep\sum^N_{i=1}\vp_i(y)=\varep\hensp{for all} y\in Q.
\end{align*}
Also, for each $y\in Q$ we have
\[
|g(y)|\leq \sum^N_{i=1}|w_{x_i}| \vp_i(y)\leq \sum^N_{i=1}(\|\cH\|+\varep)\vp_i(y)=\|\cH\|+\varep.\]
  The proof of Proposition \ref{thni} is complete.
\end{proof}

\begin{cor} \label{foze}Let $\cH$ be a bundle with non-empty fibers, equal to its own \glr.  Then there exists a continuous map $g:Q\to\bR^r$, such that
$
\|\cH-g\| \leq \frac12\|\cH\|$, and 
$|g(y)| \leq 2\|\cH\|$ for all $y\in Q$. \end{cor}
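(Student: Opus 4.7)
The plan is simply to invoke Proposition \ref{thni} with a carefully chosen $\varepsilon$, after first dispatching the degenerate case.

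First, suppose $\|\cH\|=0$. Then for every $x\in Q$ we have $\dist(0,H_x)=0$. Since each $H_x$ is a non-empty affine subspace of $\bR^r$, it is closed in $\bR^r$, so $0\in H_x$. Taking $g\equiv 0$ then gives $\|\cH-g\|=\|\cH\|=0$ and $|g(y)|=0$, satisfying both conclusions.

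Now suppose $\|\cH\|>0$; note that $\|\cH\|<+\infty$ by the preceding Proposition. Apply Proposition \ref{thni} with $\varepsilon:=\tfrac12\|\cH\|$ to obtain a continuous map $g:Q\to\bR^r$ with
\[
\dist(g(y),H_y)\le \tfrac12\|\cH\|\quad\text{and}\quad |g(y)|\le \|\cH\|+\tfrac12\|\cH\|=\tfrac32\|\cH\|\le 2\|\cH\|
\]
for every $y\in Q$. The second inequality is precisely the size bound required in the conclusion. For the first, observe that for any affine subspace $H\subseteq\bR^r$ and vector $v\in\bR^r$ we have $\dist(0,H-v)=\dist(v,H)$; hence
\[
\|\cH-g\|=\sup_{y\in Q}\dist(0,H_y-g(y))=\sup_{y\in Q}\dist(g(y),H_y)\le \tfrac12\|\cH\|,
\]
as required.

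There is really no obstacle here; the only mild subtlety is to notice that the case $\|\cH\|=0$ must be handled separately (since Proposition \ref{thni} with $\varepsilon>0$ cannot by itself force $\|\cH-g\|=0$), and that this case follows immediately from the fact that affine subspaces of $\bR^r$ are closed.
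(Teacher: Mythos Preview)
Your proof is correct and follows exactly the same approach as the paper: apply Proposition \ref{thni} with $\varepsilon=\tfrac12\|\cH\|$ when $\|\cH\|>0$, and take $g=0$ when $\|\cH\|=0$. You merely spell out a few details (closedness of affine subspaces, the identity $\dist(0,H-v)=\dist(v,H)$) that the paper leaves implicit.
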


\begin{proof} If $\|\cH\|>0$, then we can just take $\varep=\frac12\|\cH\|$ in Proposition \ref{thni}.  If instead $\|\cH\|=0$, then we can just take $g=0$.\end{proof}

Now we can prove the existence of sections.  Let $\cH=(H_x)_{x\in Q}$ be a bundle.  Suppose the $H_x$ are all non-empty, and assume that $\cH$ is its own \glr.  By induction on $i=0,1,2,\dots$, we define continuous maps $f_i,g_i:Q\to \bR^r$.  We start with $f_0=g_0=0$.  Given $f_i$ and $g_i$, we apply Corollary \ref{foze} to the bundle $\cH-f_i$, to produce a continuous map $g_{i+1}:Q\to \bR^r$, such that
$\|(\cH-f_i)-g_{i+1}\|\leq \frac12 \|\cH-f_i\|$, and $|g_{i+1}(y)|\leq 2\|\cH-f_i\|$ for all $y\in Q$.

We then define $f_{i+1}=f_i+g_{i+1}$.  This completes our inductive definition of the $f_i$ and $g_i$.  Note that $f_0=0$, $\|\cH-f_{i+1}\|\leq \frac12 \|\cH-f_i\|$ for each $i$, and $|f_{i+1}(y)-f_i(y)|\leq 2\|\cH-f_i\|$ for each $y\in Q$, $i\geq 0$.  Therefore, $\|\cH-f_i\|\leq 2^{-i}\|\cH\|$ for each $i$, and $|f_{i+1}(y) - f_i(y)|\leq 2^{1-i} \|\cH\|$ for each $y\in Q$, $i\geq 0$.
In particular, the $f_i$ converge uniformly on $Q$ to a continuous map $f:Q\to \bR^r$, and 
$\|\cH-f_i\|\to 0$ as $i\to \infty$. 

Now, for any $y\in Q$, we have 
\begin{align*}
\dist(f(y),H_y)&=\lim_{i\to\infty}\dist(f_i(y),H_y)\\
&=\lim_{i\to\infty}\dist\bigl(0,H_y-f_i(y)\bigr)\leq \liminf_{i\to\infty}\|\cH-f_i\|=0.
\end{align*}
Thus, $f(y)\in H_y$ for each $y\in Q$.  Since also $f:Q\to \bR^r$ is a continuous map, we see that $f$ is a section of $\cH$.  This completes the proof of existence of sections.\hfill\qed
\end{say}

\begin{say}[Further problems and remarks]\label{foon.on}
We return to the equation
\begin{equation}\label{foon.on.1}
\phi_1f_1+\dots+\phi_rf_r=\vp\hensp{on}\bR^n,
\end{equation}
where $f_1,\dots,f_r$ are given polynomials.

Let $X$ be a function space, such as $C^m_{\rm loc}(\bR^n)$ or
 $C^\alpha_{\rm loc}(\bR^n)$ $(0<\alpha\leq 1)$.  It would be interesting to 
know how to decide whether the equation (\ref{foon.on.1}) admits a solution
 $\phi_1,\dots,\phi_r\in X$.  
 Some related examples are given in (\ref{counter.exmps}).
  If $\vp$ is  real-analytic, and if (\ref{foon.on.1})
admits a continuous solution, then we can take the continuous functions 
$\phi_i$ to be real-analytic outside the common zeros of the $f_i$. 
 To see this we invoke the following

\begin{thm}[Approximation Theorem, see \cite{narasimhan}]
 Let $\phi,\sigma:\Omega\to \bR$ be
 continuous functions on an open set $\Omega\subset \bR^n$, and 
suppose $\sigma>0$ on $\Omega$.  Then there exists a real-analytic 
function $\tilde \phi:\Omega\to\bR$ such that 
$|\tilde \phi(x)-\phi(x)|\leq \sigma(x)$
 for all $x\in  \Omega$.
\end{thm}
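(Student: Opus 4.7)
The plan is a Carleman-type construction: decompose $\phi$ via a smooth partition of unity into a locally finite sum of continuous functions with compact support in $\Omega$, and replace each summand by its convolution with a sufficiently narrow Gaussian. Since the heat kernel $G_t(z) = (4\pi t)^{-n/2}e^{-|z|^2/(4t)}$ extends holomorphically to $\c^n$, such a convolution is automatically real-analytic (indeed the restriction of an entire function on $\c^n$).

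Concretely, I would first exhaust $\Omega$ by compact sets $K_1 \subset K_2 \subset \cdots$ with $K_j \subset \inter(K_{j+1})$ and $\bigcup_j K_j = \Omega$, and pick a smooth partition of unity $\{\chi_j\}$ on $\Omega$ with $\chi_j$ supported in an open shell squeezed between $K_{j-2}$ and $K_{j+1}$, so that only finitely many $\chi_j$ are nonzero at any given point. Extend each $\chi_j\phi$ by zero to $\bR^n$ and set $\Phi_j^t := G_t * (\chi_j \phi)$. Writing $\sigma_k := \min_{K_k}\sigma > 0$, I would then choose widths $t_j \downarrow 0$ so that simultaneously
\begin{itemize}
\item[\textup{(a)}] $\sup_{\bR^n}|\Phi_j^{t_j} - \chi_j\phi| \leq 2^{-j-1}\sigma_{j+1}$, by the standard mollifier estimate applied to the uniformly continuous $\chi_j\phi$; and
\item[\textup{(b)}] $\sup_{K_k}|\Phi_j^{t_j}| \leq 2^{-j-2}\sigma_k$ for every $k \leq j-2$, using the Gaussian tail bound $|\Phi_j^{t_j}(x)| \leq \|\chi_j\phi\|_\infty\, e^{-d_{jk}^2/(4t_j)}$ with $d_{jk} := \dist(K_k, \supp\chi_j) > 0$, together with the fact that only finitely many $k$ are involved for fixed $j$.
\end{itemize}

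Setting $\tilde\phi := \sum_j \Phi_j^{t_j}$, condition (b) forces uniform convergence on each $K_k$, so $\tilde\phi$ is real-analytic on $\Omega$. At a point $x \in \Omega$, pick $k$ with $x \in K_k$ and split
\[
\tilde\phi(x) - \phi(x) = \sum_j \bigl(\Phi_j^{t_j}(x) - \chi_j(x)\phi(x)\bigr).
\]
For the finitely many ``nearby'' $j$ with $x \in \supp\chi_j \subset K_{j+1}$, (a) gives a bound $2^{-j-1}\sigma_{j+1} \leq 2^{-j-1}\sigma(x)$; for the remaining ``far'' $j$ one has $\chi_j(x)\phi(x)=0$ and $x\in K_k$ with $k \leq j-2$, so (b) gives $2^{-j-2}\sigma_k \leq 2^{-j-2}\sigma(x)$. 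Summing over $j$ yields $|\tilde\phi(x)-\phi(x)| \leq \sigma(x)$, as required. The main obstacle is the quantitative balance in selecting $t_j$: the Gaussian must be narrow enough to resolve $\chi_j\phi$ (condition (a)) yet so narrow that its tails, which leak onto distant compact sets $K_k$ where $\sigma_k$ may be extremely small, remain controlled (condition (b)); working with the pointwise lower bounds $\sigma_k$ in both conditions reduces this to a routine finite choice of $t_j$ at each stage.
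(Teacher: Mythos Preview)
The paper does not prove this theorem; it is quoted with a citation to Narasimhan and used as a black box. So there is no in-paper argument to compare against, and your Carleman-type construction is indeed the standard route.

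However, there is a real gap in the step where you pass from convergence to real-analyticity. You write that ``condition (b) forces uniform convergence on each $K_k$, so $\tilde\phi$ is real-analytic on $\Omega$.'' Uniform convergence of real-analytic functions on a \emph{real} compact set does not make the limit real-analytic: by Weierstrass approximation, every continuous function on $K_k$ is a uniform limit of polynomials. What you actually need is that the holomorphic extensions of the partial sums converge uniformly on some fixed complex neighborhood of each $K_k$ in $\c^n$; only then does the limit inherit holomorphy, hence real-analyticity on $K_k$.

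The repair is routine but must be made explicit. For each $k$, fix in advance a radius $\delta_k>0$ so small that the complex tube $\tilde K_k:=\{z\in\c^n:\dist(z,K_k)<\delta_k\}$ still has positive real distance to every $\supp\chi_j$ with $j$ in your ``far'' range. The entire extension of a Gaussian convolution satisfies
\[
\bigl|\Phi_j^{t}(x+iy)\bigr|\le e^{|y|^2/(4t)}\,(4\pi t)^{-n/2}\int_{\r^n} e^{-|x-w|^2/(4t)}\,|\chi_j\phi(w)|\,dw,
\]
so on $\tilde K_k$ you pay an extra factor $e^{\delta_k^2/(4t_j)}$ against a Gaussian tail of scale $e^{-c\,d_{jk}^2/t_j}$; as long as $\delta_k$ is smaller than the relevant separation, this is still driven to zero by shrinking $t_j$. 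Replace your condition (b) by the stronger requirement $\sup_{\tilde K_k}|\Phi_j^{t_j}|\le 2^{-j-2}\sigma_k$ for every $k$ in the far range. Then the tail converges uniformly on each $\tilde K_k$, the finitely many remaining terms are entire, and $\tilde\phi$ is holomorphic on a neighborhood of every $K_k$. The approximation estimate on $\Omega$ goes through exactly as you wrote it.
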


Once we know the Approximation Theorem, we can easily correct a continuous
 solution $\phi_1,\dots,\phi_r$ of (\ref{foon.on.1}) so that the  functions
 $\phi_i$ are real-analytic outside the common zeros of $f_1,\dots,f_r$. 
 We take
$\Omega=\{x\in \bR^n:f_i(x)\ne 0\hensp{for some} i\}$, and set 
$\sigma(x)=\sum_i (f_i(x))^2$ for $x\in \Omega$.

We obtain real-analytic functions $\tilde \phi_i$ on $\Omega$ such that
 $|\tilde \phi_i-\phi_i|\leq\sigma$ on $\Omega$.  Setting 
$h=\sum_i \tilde \phi_if_i-\vp= \sum_i(\tilde \phi_i-\phi_i)f_i$ on $\Omega$
 and then defining
\[
\left\{\begin{array}{l}
\phi^{\#}_i=\tilde \phi_i-\frac{h f_i}{f^2_1+\dots+f^2_r}\hensp{on}\Omega\\
\phi^{\#}_i=\phi_i\hensp{on} \bR^n\setminus \Omega\end{array}\right\},\]
we see that $\sum_i \phi^{\#}_i f_i=\vp$, with 
$\phi^{\#}_i$ continuous on $\bR^n$
 and real-analytic on $\Omega$.
\end{say}

\section{Computation of the solutions}\label{sec.3}

In this section, we show how to compute a continuous solution 
$( \phi_1 , \ldots , \phi_r ) $ of the equation  
\begin{equation}\label{1}
 \phi_1 f_1 + \ldots + \phi_r f_r = \phi ,
\end{equation}
 assuming such a solution exists. 
We start with an example, then spend several sections explaining how to compute Glaeser refinements and sections of bundles, and finally return to (1) in the general case.  

For our example, we pick Hochster's equation  
\begin{equation}\label{2}
\phi_1 x^2 + \phi_2 \ y^2 + \phi_3 \ xyz^2 = \phi \quad  \text{on} \ \ Q = [-1, 1]^{3} , 
\end{equation}
where $\phi$ is a given, continuous, real-valued function on $Q$. Our goal here is to compute a continuous solution of (\ref{2}),
 assuming such a solution exists. 

Suppose $\phi_1 , \phi_2 , \phi_3$ satisfy (\ref{2}). Then, for every positive integer $\nu$, we have  
$$   \phi_1 \big( \tfrac{1}{\nu} , 0, z \big)   \cdot \tfrac{1}{\nu^2} = \phi \big( \tfrac{1}{\nu} , 0, z \big) , $$ 
$$   \phi_2 \big( 0 , \  \tfrac{1}{\nu} , \ z \big)   \cdot \tfrac{1}{\nu^2} = \phi \big( 0 , \  \tfrac{1}{\nu} , \ z \big) , \ \text{and} $$ 
$$   \phi_1 \big( \tfrac{1}{\nu} , \tfrac{1}{\nu} , z \big)   \cdot \tfrac{1}{\nu^2} +   \phi_2 \big( \tfrac{1}{\nu} , \tfrac{1}{\nu} , z \big)   \cdot \ \tfrac{1}{\nu^2} +   \phi_3 \big( \tfrac{1}{\nu} , \  \tfrac{1}{\nu} , \ z \big)   \cdot \ \tfrac{z^2}{\nu^2} = \phi \big( \tfrac{1}{\nu} , \tfrac{1}{\nu} , z \big) $$
for all $z \in [-1, 1].$ Hence, it is natural to define  
\begin{eqnarray}
 \xi_{1}(z)& =& \lim_{\nu \to \infty} \nu^2 \cdot \phi \big( \tfrac{1}{\nu} , 0, z \big),\label{3} \\
\xi_{2}(z) &=& \lim_{\nu \to \infty} \nu^2 \cdot \phi \big( 0, \tfrac{1}{\nu} , z \big) \qtq{and}\label{4}\\
\xi_{3}(z) &=& \lim_{\nu \to \infty} \nu^2 \cdot \phi \big( \tfrac{1}{\nu} , \tfrac{1}{\nu} , z \big)  \qtq{ for $z \in [-1, 1]$.} \label{5}
\end{eqnarray}

If~(\ref{2}) has a continuous solution $\overrightarrow{\phi} = (\phi_1 , \phi_2 , \phi_3)$, then the limits~(\ref{3}) exist, and our solution $\overrightarrow{\phi}$ satisfies  
\begin{eqnarray}
 \phi_{1}(0, 0, z) = \xi_{1}(z),\quad
 \phi_{2}(0, 0, z) = \xi_{2}(z),
\qtq{and} \label{6}\\
\phi_{1}(0, 0, z) + \phi_2 (0, 0, z) + z^2 \phi_3(0, 0, z) = \xi_3(z)\label{7}
\end{eqnarray}
 for $z \in [-1, 1]$, so that
\begin{equation}\label{8}
 \phi_{3}(0, 0, z) = z^{-2} \ \cdot \  [ \xi_{3}(z) - \xi_{1}(z) - \xi_2(z) ] \quad \text{for} \ z \  \in [-1, 1] \smallsetminus \{ 0 \}.
\end{equation}
\indent To recover $\phi_{3}(0,0,0),$ we just pass to the limit in~(\ref{8}). Let us define
\begin{equation}\label{9}
 \xi = \lim_{\nu \to \infty} \nu^2 \cdot   \xi_{3} \big( \tfrac{1}{\nu} \big) - \xi_{1} \big( \tfrac{1}{\nu} \big) - \xi_{2} \big( \tfrac{1}{\nu} \big)   .
\end{equation}
If~(\ref{2}) has a continuous solution $\overrightarrow{\phi}$, then the limit~(\ref{9}) exists, and we have 
\begin{equation}\label{10}
\phi_{3}(0, 0, 0) = \xi.
\end{equation}
Thus, $\overrightarrow{\phi}(0, 0, z)(z \ \in \ [-1, 1])$ can be computed from the given function $\phi$. Note that $\phi_{3}(0, 0, 0)$ arises from $\phi$ by taking an iterated limit. 

Since we assumed that $\overrightarrow{\phi}$ is continuous, we have in particular
\begin{equation}\label{11}
\text{The functions } \ \ \phi_{i}(0, 0, z) \ \ (i=1, 2, 3) \text{ are continuous on } [-1, 1].
\end{equation}
\noindent From now on, we regard  
$\overrightarrow{\phi}(0, 0, z) = (\phi_{1}(0, 0, z), \phi_{2}(0, 0, z), \phi_{3}(0, 0, z))$  
as known. 

Let us now define
\begin{equation}\label{12}
\overrightarrow{\phi}^{\#}(x,y,z)  =  \overrightarrow{\phi}(x,y,z) - \overrightarrow{\phi}(0, 0, z)=
 (\phi^{\#}_{1}(x,y,z), \phi^{\#}_{2}(x,y,z), \phi^{\#}_{3}(x,y,z))
\end{equation}
and
\begin{equation}\label{13} \phi^{\#}(x,y,z) =  \phi (x,y,z) - [\phi_{1}(0,0,z) \cdot x^{2} + \phi_{2}(0, 0, z) \cdot y^{2} + \phi_{3}(0,0,z) \cdot xyz^{2}]  
\end{equation}
on $Q$.  
Then, since $\overrightarrow{\phi}$ is a continuous solution of~(\ref{2}), we see that  
\begin{equation}\label{14}
 \phi^{\#} \text{ and all the }  \phi^{\#}_{i} \text{ are continuous functions on } Q ; 
 \end{equation}
\begin{equation}\label{15}
 \phi^{\#}_{i}(0, 0, z) = 0  \qquad \text{ for all } z \ \in [-1, 1], \ \ \ i =1, 2, 3; \text{ and}  
 \end{equation}
\begin{equation}\label{16}
 \phi^{\#}_{1}(x,y,z) \cdot x^2 + \phi^{\#}_{2}(x,y,z) \cdot y^2 + \phi^{\#}_{3}(x,y,z) \cdot xyz^2 = \phi^{\#} (x,y,z) \text{ on } Q.
 \end{equation}
 
We don't know the functions $\phi^{\#}_{i}(i=1,2,3)$ , but $\phi^\#$ may be computed from the given function $\phi$ in~(\ref{2}), since we have already computed $\phi_{i}(0,0,z)  
(i=1,2,3). $  (See~(\ref{13}).)  

We now define $\overrightarrow{\Phi}^{\#} (x,y,z) = (\Phi^{\#}_{1} (x,y,z), \Phi^{\#}_{2} (x,y,z), \Phi^{\#}_{3} (x,y,z))$ to be the shortest vector $(v_1, v_2, v_3) \ \in \mathbb{R}^3$ such that
\begin{equation}\label{17}
 v_1 \cdot x^2 + v_2 \cdot y^2 + v_3 \cdot xyz^2 = \phi^{\#}(x,y,z). 
\end{equation}
\indent Thus,  
\begin{equation}\label{18}
 \Phi^{\#}_{1}(x,y,z) \cdot x^2 + \Phi^{\#}_{2}(x,y,z) \cdot y^2 + \Phi^{\#}_{3}(x,y,z) \cdot xyz^2 = \phi^{\#} (x,y,z) \text{ on } Q.
 \end{equation}
\indent Unless $x=y=0$, we have
\begin{equation}\label{19}
\begin{array}{lll}
 \Phi^{\#}_{1}(x,y,z) &=& \frac{x^2}{x^4 + y^4 + x^2y^2 z^4} \cdot \phi^{\#}(x,y,z),\\
 \Phi^{\#}_{2}(x,y,z)  &=& \frac{y^2}{x^4 + y^4 + x^2y^2 z^4} \cdot \phi^{\#}(x,y,z),\\
\Phi^{\#}_{3}(x,y,z) & =&\frac{xyz^2}{x^4 + y^4 + x^2y^2 z^4} \cdot \phi^{\#}(x,y,z)
\end{array}
 \end{equation}
\begin{equation}\label{22}
\text{ If } x = y = 0 , \ \ \text{ then }  \Phi^{\#}_{i}(x,y,z) = 0 \ \ \ \text{ for } \  i = 1,2,3.
 \end{equation} \hspace{.1in} 
\noindent Since $\phi^\#$ may be computed from $\phi$, the functions $\Phi^{\#}_{i} \ \ \ (i=1,2,3)$ may also be computed from $\phi$. 

Recall that $\overrightarrow{\phi}^{\#} = (\phi^{\#}_{1} ,  \phi^{\#}_{2} , \phi^{\#}_{3})$ satisfies~(\ref{16}). Since $\overrightarrow{\Phi} (x,y,z)$ was defined as the shortest vector satisfying~(\ref{17}), we learn that
\begin{equation}\label{23}
 \bigm| \ \overrightarrow{\Phi}^{\#}(x,y,z) \ \bigm| \ \leq  \ \bigm| \overrightarrow{\phi}^{\#}(x,y,z) \bigm| \ \ \text{ for all } (x , y  , z) \ \in \ Q .
 \end{equation}
Since also $\overrightarrow{\phi}^{\#}$ satisfies~(\ref{14}) and~(\ref{15}), it follows that 
\begin{equation}\label{24}
\Phi^{\#}_{i}(x,y,z) \ \to 0 \text{ as } (x, y,z) \to (0, 0, z'), \text{ for each } i=1,2,3 . 
 \end{equation}
Here, $z' \in [-1, 1]$ is arbitrary.  

We will now check that
\begin{equation}\label{25}
\Phi^{\#}_{1} ,  \Phi^{\#}_{2} , \Phi^{\#}_{3} \text{ are continuous functions on } Q .
 \end{equation}
Indeed, the  $\Phi^{\#}_{i}$ are continuous at each $(x,y,z) \ \in \ Q$ such that $(x,y) \neq (0,0)$, as we see at once from~(\ref{14}) and~(\ref{19}) $\cdots$~(\ref{21}). On the other hand,~(\ref{22}) and~(\ref{24}) tell us that the $\Phi^{\#}_{i}$ are continuous at each $(x,y,z) \ \in \ Q$ such that $(x,y) = (0, 0).$ Thus,~(\ref{25}) holds.  

Next, we set
\begin{equation}\label{26}
 \Phi_{i} (x,y,z) = \Phi^{\#}_{i}(x,y,z) \ + \ \phi_{i}(0, 0, z) \ \text{ for } (x,y,z) \ \in \ Q , \ \ \ i = 1,2,3.
 \end{equation}
\indent Since $\Phi^{\#}_{i}(x,y,z)$ and $\phi_{i}(0, 0, z)$ can be computed from $\phi$, the same is true of $\Phi_{i} (x, y, z)$.  

Also,~(\ref{11}) and~(\ref{25}) imply
\begin{equation}\label{27}
 \Phi_{1} , \Phi_{2} , \Phi_3 \text{ are continuous functions on } Q .
 \end{equation}
From~(\ref{13}),~(\ref{18}) and~(\ref{26}), we have
\begin{equation}\label{28}
 \Phi_{1} (x,y,z) \cdot x^2 +  \Phi_{2} (x, y, z) \cdot y^2 + \Phi_3 (x,y,z) \cdot xyz^2 = \phi(x,y,z) \text{ on } Q .
 \end{equation}
 
Note also that the $\Phi_{i}$ satisfy the estimate
\begin{equation}\label{29}
\max_{ \qquad \qquad  {x \epsilon Q}, \ \ {i=1,2,3}} \bigm| \Phi_{i} (x) \bigm| \ \leq \ C \hspace*{-.4in} \max_{ {\qquad \qquad x \epsilon Q}, \ \ {i=1,2,3}} \bigm| \phi_{i}(x) \bigm|
 \end{equation}
for an absolute constant $C$, as follows from~(\ref{13}),~(\ref{23}) and~(\ref{26}).  

Let us summarize the above discussion of equation~(\ref{2}). Given a function $\phi : Q \to \mathbb{R}$, we proceed as follows.  
\medskip

\noindent {\it Step 1}: We compute the limits~(\ref{3}),~(\ref{4}),~(\ref{5}) for each $z \ \in[-1, 1]$, to obtain the functions $\xi_{i}(z) \quad (i=1,2,3)$.  

\noindent {\it Step 2}: We compute the limit~(\ref{9}), to obtain the number $\xi$. 

\noindent {\it Step 3}: We read off the functions $\phi_{i}(0,0,z) \qquad (i=1,2,3)$ from~(\ref{6}),~(\ref{7}),~(\ref{8}),~(\ref{10}). 

\noindent {\it Step 4}: We compute the function $\phi^{\#}(x,y,z)$ from~(\ref{13}). 

\noindent {\it Step 5}: We compute the functions $\Phi^{\#}_{i}(x,y,z) \qquad (i=1,2,3) $ from 
\noindent ~(\ref{19}) $\cdots$~(\ref{22}). 

\noindent {\it Step 6}: We read off the functions $\Phi_{i}(x,y,z) \qquad (i=1,2,3)$ from~(\ref{26}).  
\medskip

 If, for our given $\phi$, equation~(\ref{2}) has a continuous solution $(\phi_1, \phi_2, \phi_3)$, then the limits exist in Steps 1 and 2, and the above procedure produces continuous functions $\Phi_1, \Phi_2, \Phi_3$ that solve equation~(\ref{2}) and satisfy estimate~(\ref{29}). 

\noindent If instead the equation~(\ref{2}) has no continuous solutions, then we cannot guarantee that the limits in Steps 1 and 2 exist. It may happen that those limits exist, but the functions $\Phi_1, \Phi_2, \Phi_3$ produced by our procedure are discontinuous.  

This concludes our discussion of example~(\ref{2}). We devote the next several sections to making calculations with bundles. We show how to pass from a given bundle to its iterated Glaeser refinements by means of formulas involving iterated limits. After recalling the construction of ``Whitney cubes'' (which will be used below), we then provide additional formulas to compute a section of a given Glaeser stable bundle with non-empty fibers. These results together allow us to compute a section of any given bundle for which a section exists. Finally, we apply our results on bundles, to provide a discussion of equation~(\ref{1}) in the general case, analogous to the discussion given above for example~(\ref{2}).  
 
\subsection{Computation of the Glaeser refinement}{\ }
\label{subsection.3.1}

We use the standard inner product on $\mathbb{R}^r$. We define a 
{\it homogeneous bundle} to be a family 
 $\cH^0 = (H^0_x)_{x \in Q}$ of vector subspaces $H^0_x \subset  \mathbb{R}^r$, indexed by the points $x$ of a closed cube $Q  \subset \mathbb{R}^n$. We allow $\{0\}$ and $\mathbb{R}^r$, but not the empty set, as vector subspaces of $\mathbb{R}^r$. Note that the fibers of a homogeneous bundle are vector subspaces of $\mathbb{R}^r$, while the fibers of a bundle are (possibly empty) affine subspaces of $\mathbb{R}^r$.  

Any bundle $\cH$ with non-empty fibers may be written uniquely in the form
\begin{equation}\label{1A}
\cH = (H_x)_{x \in Q} = (v(x) + H^0_x)_{x \in Q},
\end{equation}
where $
\cH^0 = (H_x^0)_{x \in Q}$ is a homogeneous bundle, and
$v(x) \perp \ H^0_x $ for each $ x \in Q.$
 
\indent Let $\widetilde{\cH}$ be the Glaeser refinement of $\cH$, and suppose $\widetilde{\cH}$ has non-empty fibers. Just as $\cH$ may be written in the form~(\ref{1A}), we can express $\widetilde{\cH}$ uniquely in the form
\begin{equation}\label{4A}
\widetilde{\cH} = (\widetilde{v}(x) + \widetilde{H}^0_x)_{x \in Q} ,
\end{equation}
where  $\widetilde{\cH}^0 = (\widetilde{H}^0_x)_{x \in Q}$
 is a homogeneous bundle, and 
$\widetilde{v}(x) \perp \widetilde{H}^0_x \text{ for each } x \in Q.$

One checks easily that $\widetilde{\cH}^0$ is the Glaeser refinement of $\cH^0$. 
The goal of this section is to understand how the vectors $\widetilde{v} (x) (x \in Q)$ depend on the vectors $v(y) (y \in Q)$ for fixed $\cH^0$. 

To do so, we introduce the sets
\begin{eqnarray}
E &=& \{(x, \lambda) \in Q \times \mathbb{R}^r : \lambda \perp H^0_x \}, \text{ and }\label{7A}\\
\Lambda(x) &=& \{ \widetilde{\lambda} \in \mathbb{R}^r : (x, \widetilde{\lambda} ) \text{ belongs to the closure of } E \} \text{ for } x \in Q.\label{8A}.
\end{eqnarray}

The following is immediate from the definitions~(\ref{7A}),~(\ref{8A}). 

\begin{claim}\label{9A}
Given $ \widetilde{\lambda} \in \Lambda (x),$ there exist points 
$ y^\nu \in Q $ and vectors $ \lambda^\nu \in \mathbb{R}^r (\nu  \geqslant 1),$
  such that  
$ y^\nu \to x $ and $ \lambda^\nu \to \widetilde{\lambda} \text{ as } \nu \to \infty ,$ and $ \lambda^\nu \perp H^0_{y^\nu}$ for each $ \nu.$\qed
\end{claim}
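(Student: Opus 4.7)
The plan is to unwind the definitions directly: by construction $\Lambda(x)$ is the slice at $x$ of the closure $\overline{E}\subset Q\times \mathbb{R}^r$, and since $Q\times\mathbb{R}^r$ is a metric space (as $Q$ is a closed cube in $\mathbb{R}^n$ and $\mathbb{R}^r$ is Euclidean), closure in $Q\times \mathbb{R}^r$ is characterized by sequential limits. So the claim should reduce to a one-line application of the sequential description of topological closure.

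More precisely, I would argue as follows. Suppose $\widetilde{\lambda}\in\Lambda(x)$. By the definition (\ref{8A}), the pair $(x,\widetilde{\lambda})$ lies in the closure of $E$ in $Q\times\mathbb{R}^r$. Equip $Q\times\mathbb{R}^r$ with, say, the product metric $d\bigl((x,\lambda),(y,\mu)\bigr)=|x-y|+|\lambda-\mu|$. For each integer $\nu\geq 1$, the ball of radius $1/\nu$ around $(x,\widetilde{\lambda})$ meets $E$; pick a point $(y^\nu,\lambda^\nu)\in E$ in that ball. Then $y^\nu\in Q$, $|y^\nu-x|<1/\nu$, $|\lambda^\nu-\widetilde{\lambda}|<1/\nu$, so $y^\nu\to x$ and $\lambda^\nu\to \widetilde{\lambda}$ as $\nu\to\infty$. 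Moreover $(y^\nu,\lambda^\nu)\in E$ means by (\ref{7A}) that $\lambda^\nu\perp H^0_{y^\nu}$, which is exactly the required orthogonality condition.

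There is really no obstacle here: the statement is an immediate consequence of the fact that in a metric space, points of the closure of a set are sequential limits of points in the set. The only thing worth noting is the choice of metric on the product, but any metric inducing the product topology works. Hence the claim follows, and no further argument is needed.
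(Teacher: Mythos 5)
Your argument is correct and is exactly what the paper has in mind: the paper states Claim \ref{9A} as immediate from the definitions \eqref{7A} and \eqref{8A}, and your unwinding via the sequential characterization of closure in the metric space $Q\times\mathbb{R}^r$ is the intended (and only) content. Nothing further is needed.
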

 
 Note that $E$ and $\Lambda (x)$ depend on $\cH^0$, but not on the vectors $v (y), \ y \in Q$. The basic properties of $\Lambda (x)$ are given by the
 following result. 

\begin{lem}\label{p12.lem} Let $x \in Q$. Then
\begin{enumerate}
\item \label{10A}
Each  $\widetilde{\lambda} \in \Lambda (x)$ is perpendicular to $ \widetilde{H}^0_x.$
\item \label{11A}
Given any vector  $ \widetilde{v} \in \mathbb{R}^r $ not belonging to 
$ \widetilde{H}^0_x, $  there exists a vector $ \lambda \in \Lambda(x)$ such that $ \lambda \cdot \widetilde{v} \neq 0.$
\item \label{12A}
The vector space $ (\widetilde{H}^0_x)^\perp \subset \mathbb{R}^r$
 has a basis $ \widetilde{\lambda}_1 (x),\ \ldots ,\   
\widetilde{\lambda}_s (x)  $ consisting entirely of vectors $ \widetilde{\lambda}_i (x) \in \Lambda (x).$
\end{enumerate}
 \end{lem}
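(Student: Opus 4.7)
My plan is to verify the three assertions in order, exploiting throughout the identification (stated just before the lemma) of $\widetilde{H}^0_x$ with the Glaeser refinement of the homogeneous bundle $\cH^0$ evaluated at $x$. In particular, every $h\in \widetilde{H}^0_x$ can be approximated by a sequence $h^\nu \in H^0_{y^\nu}$ along any sequence $y^\nu \to x$ in $Q$.

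For part (\ref{10A}), I will fix $\widetilde{\lambda}\in\Lambda(x)$ and $h\in \widetilde{H}^0_x$. Claim \ref{9A} produces $y^\nu\to x$ and $\lambda^\nu\to\widetilde{\lambda}$ with $\lambda^\nu\perp H^0_{y^\nu}$; by the preceding remark I can also choose $h^\nu\in H^0_{y^\nu}$ with $h^\nu\to h$. The orthogonality $\lambda^\nu\cdot h^\nu=0$ then passes to the limit, giving $\widetilde{\lambda}\cdot h=0$.

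For part (\ref{11A}), the key observation is that if $\widetilde{v}\notin \widetilde{H}^0_x$, then either $\widetilde{v}\notin H^0_x$, or else $\widetilde{v}\in H^0_x$ but $\dist(\widetilde{v},H^0_y)\not\to 0$ as $y\to x$. In both subcases I can extract a sequence $y^\nu\to x$ and a constant $\varepsilon>0$ with $\dist(\widetilde{v},H^0_{y^\nu})\geq\varepsilon$ (using the constant sequence $y^\nu\equiv x$ in the first subcase). Let $\lambda^\nu$ be the orthogonal projection of $\widetilde{v}$ onto $(H^0_{y^\nu})^{\perp}$; then $\lambda^\nu\perp H^0_{y^\nu}$, $|\lambda^\nu|=\dist(\widetilde{v},H^0_{y^\nu})\in[\varepsilon,|\widetilde{v}|]$, and $\lambda^\nu\cdot\widetilde{v}=|\lambda^\nu|^2\geq\varepsilon^2$. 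A compactness argument on the bounded sequence $(\lambda^\nu)$ then yields, after passing to a subsequence, a limit $\lambda\in\Lambda(x)$ with $\lambda\cdot\widetilde{v}\geq\varepsilon^2>0$.

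Part (\ref{12A}) will then be a short linear-algebra deduction: by (\ref{10A}) the linear span $V$ of $\Lambda(x)$ lies inside $(\widetilde{H}^0_x)^{\perp}$; if this containment were strict, taking orthogonal complements would give a vector $\widetilde{v}\in V^{\perp}\setminus\widetilde{H}^0_x$, but then (\ref{11A}) would produce some $\lambda\in\Lambda(x)\subseteq V$ with $\lambda\cdot\widetilde{v}\neq 0$, contradicting $\widetilde{v}\in V^{\perp}$. Hence $V=(\widetilde{H}^0_x)^{\perp}$, so a basis can be extracted from $\Lambda(x)$ itself. The main obstacle will be the compactness-and-projection step in (\ref{11A}); once the case split $\widetilde{v}\in H^0_x$ vs.\ $\widetilde{v}\notin H^0_x$ is handled cleanly, (\ref{10A}) and (\ref{12A}) follow essentially formally.
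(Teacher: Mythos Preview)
Your proof is correct and follows essentially the same approach as the paper. The only cosmetic differences are that in part~(\ref{11A}) you make the case split $\widetilde{v}\notin H^0_x$ versus $\widetilde{v}\in H^0_x$ explicit (the paper absorbs both into the single statement that $\dist(\widetilde{v},H^0_y)\not\to 0$, which already covers $y=x$), and you use the un-normalized projections $\lambda^\nu$ rather than the paper's unit vectors; your argument for (\ref{12A}) via the span $V$ of $\Lambda(x)$ is the same as the paper's observation that $\bigcap_{\widetilde{\lambda}\in\Lambda(x)}\widetilde{\lambda}^{\perp}=\widetilde{H}^0_x$, just stated dually.
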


{\it Proof}: To check~(\ref{10A}), let $\widetilde{\lambda} \in \Lambda (x)$ and let $\widetilde{v} \in \widetilde{H}^0_x$. We must show that $\widetilde{\lambda} \cdot \widetilde{v} = 0$. Let $y^{\nu} \in Q $ and $\lambda^{\nu} \in \mathbb{R}^r \ ( \nu \geqslant 1)$ be as in~(\ref{9}). Since $\widetilde{v} \in \widetilde{H}^0_x$ and $(\widetilde{H}^0_y)_{y \in Q}$ is the Glaeser refinement of $(H^0_y)_{y \in Q}$, we know that distance $( \widetilde{v}, H^0_y) \to 0$ as $y \to x$. In particular, distance $( \widetilde{v}, H^0_{y^{\nu}}) \to 0$ as $\nu \to \infty$. Hence, there exist $v^{\nu} \in H^0_{y^{\nu}} \ \ (v \geqslant 1)$ such that $v^{\nu} \to \widetilde{v}$ as $\nu \to \infty$. 
Since $v^{\nu} \in H^0_{y^{\nu}}$ and $\lambda^{\nu} \perp H^0_{y^{\nu}}$, we have $\lambda^{\nu} \cdot v^{\nu} = 0$ for each $\nu$. Since $\lambda^{\nu} \to \widetilde{\lambda}$ and $v^{\nu} \to \widetilde{v}$ as $\nu \to \infty$, it follows that $\widetilde{\lambda} \cdot \widetilde{v} = 0$, proving~(\ref{10A}). 

To check~(\ref{11A}), suppose $\widetilde{v} \in \mathbb{R}^r$ does not belong to  $\widetilde{H}^0_x$. Since $(\widetilde{H}^0_y)_{y \in Q}$ is the Glaeser refinement of $(H^0_y)_{y \in Q}$, we know that distance $(\widetilde{v}, H^0_y)$ does not tend to zero as $y \in Q$ tends to $x$. Hence there exist $\epsilon > 0$ and a sequence of points $y^{\nu} \in Q \ \ (\nu \geqslant 1)$, such that  
\begin{equation}\label{13A}
y^\nu \to x \text{ as } \nu \to \infty, \qtq{ but }
\dist (\widetilde{v} , H^0_{y^\nu}) \geqslant \epsilon \text{ for each } \nu.
\end{equation}
Thanks to~(\ref{14}), there exist unit vectors $\lambda^\nu \in \mathbb{R}^r \ \ (\nu \geqslant 1)$, 
such that
\begin{equation}\label{15A}
\lambda^\nu \perp H^0_{y^\nu}
\qtq{and} \lambda^\nu \cdot \widetilde{v} \geqslant \epsilon
\qtq{for each $\nu$. }
\end{equation}
Passing to a subsequence, we may assume that the vectors $\lambda^\nu$ tend to a limit $\widetilde{\lambda} \in \mathbb{R}^r $as $\nu \to \infty$. 

Comparing~(\ref{15A}) to~(\ref{7A}), we see that $(y^\nu, \lambda^\nu) \in E $ for each $\nu$. Since $y^\nu \to x$ and $\lambda^\nu \to \widetilde{\lambda}$ as $\nu \to \infty$, the point $(x, \widetilde{\lambda})$ belongs to the closure of $E$, hence $\widetilde{\lambda} \in \Lambda (x)$. 
Also, $\widetilde{\lambda} \cdot \widetilde{v} = \lim_{\nu \to \infty}\limits \lambda^\nu \cdot \widetilde{v} \geqslant \epsilon$ by~(\ref{16}); in particular, $\widetilde{\lambda} \cdot \widetilde{v} \neq 0$. The proof of~(\ref{11A}) is complete. 
Finally, to check~(\ref{12A}), we note that  
$$
\bigcap_{\widetilde{\lambda} \in \Lambda (x)}\limits (\widetilde{\lambda}^\perp) = \widetilde{H}^0_x, \quad \text{thanks to~(\ref{10}) and~(\ref{11}) .}
$$
Assertion~(\ref{12A}) now follows from linear algebra. 
The proof of Lemma \ref{p12.lem} is complete. \qed  

Let $\widetilde{\lambda}_1 (x), \cdots , \widetilde{\lambda}_s (x)$ be the basis for $(\widetilde{H}^0_x)^\perp$ given by~(\ref{12A}), and let 
$  \widetilde{\lambda}_{s+1} (x), \cdots , \widetilde{\lambda}_r (x) $
be a basis for $\widetilde{H}^0_x.$  Thus
\begin{equation}\label{18A}
 \widetilde{\lambda}_1 (x), \cdots , \widetilde{\lambda_r} (x) \  
\text{ form a basis for } \mathbb{R}^r.
\end{equation} 
For $1 \leq i \leq s$, the vector $\widetilde{\lambda}_i (x)$ belongs to $\Lambda(x)$. Hence, by~(\ref{9A}), there exist vectors $\lambda^\nu_i (x) \in \mathbb{R}^r$ and points $y^\nu_i (x) \in Q \ \ (\nu \geqslant 1)$, such that  
\begin{eqnarray}
y^\nu_i(x) \to x \ \text{ as } \nu \to \infty ,\label{19A}\\
\lambda^\nu_i (x) \to \widetilde{\lambda}_i (x) \text{ as } \nu \to \infty, \text{ and }\label{20A}\\
\lambda^\nu_i (x) \perp H^0_{y{^\nu_i} (x)} \text{ for each } \nu.\label{21A}
\end{eqnarray}
\noindent For $s + 1 \leq i \leq r$, we take $y^\nu_i (x) = x$ and $\lambda^\nu_i (x) = 0 \ \ (\nu \geqslant 1)$. 
Thus,~(\ref{19}),~(\ref{21}) hold also for $s + 1 \leq i \leq r$, although~(\ref{20A}) holds only for $1 \leqslant i \leqslant s$. 

We now return to the problem of computing $\widetilde{v} (x) ( x \in Q)$ for the bundles given by~(\ref{1A}) and~(\ref{4A}). The answer is as follows. 

\begin{lem}\label{22A} Given $x \in Q$, we have
$\widetilde{\lambda}_i (x) \cdot \widetilde{v} (x) = \lim_{\nu \to \infty}\limits \ \ \lambda^\nu_i (x) \cdot v (y^\nu_i (x))$ for $ i=1, \cdots , r.$ 
In particular, the limit in~(\ref{22A}) exists. 
\end{lem}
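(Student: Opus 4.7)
The plan is to dispose of the case $s+1 \le i \le r$ by inspection, and to handle the main case $1 \le i \le s$ by combining the definition of the Glaeser refinement with the orthogonality of the $\lambda^\nu_i(x)$ to the homogeneous fibers $H^0_{y^\nu_i(x)}$.

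For $s+1 \le i \le r$, both sides of the asserted equality vanish trivially. By construction $\widetilde{\lambda}_i(x) \in \widetilde{H}^0_x$, whereas $\widetilde{v}(x) \perp \widetilde{H}^0_x$, so the left-hand side $\widetilde{\lambda}_i(x) \cdot \widetilde{v}(x) = 0$. The convention $\lambda^\nu_i(x) = 0$ forces the right-hand side to be $0$ as well.

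Now fix $i$ with $1 \le i \le s$ and abbreviate $y^\nu := y^\nu_i(x)$, $\lambda^\nu := \lambda^\nu_i(x)$. The key input is that $\widetilde{v}(x) \in \widetilde{H}_x$ (since $0 \in \widetilde{H}^0_x$ and $\widetilde{H}_x = \widetilde{v}(x) + \widetilde{H}^0_x$). By the very definition of Glaeser refinement, this forces $\dist\bigl(\widetilde{v}(x), H_y\bigr) \to 0$ as $y \to x$ in $Q$. Specializing to the sequence $y^\nu \to x$ and writing $H_{y^\nu} = v(y^\nu) + H^0_{y^\nu}$, I can produce vectors $u^\nu \in H^0_{y^\nu}$ with
$$\bigl|\widetilde{v}(x) - v(y^\nu) - u^\nu\bigr| \longrightarrow 0.$$
Because $\lambda^\nu \to \widetilde{\lambda}_i(x)$, the sequence $\{\lambda^\nu\}$ is bounded, and taking inner products gives
$$\lambda^\nu \cdot \widetilde{v}(x) - \lambda^\nu \cdot v(y^\nu) - \lambda^\nu \cdot u^\nu \longrightarrow 0.$$
The orthogonality $\lambda^\nu \perp H^0_{y^\nu}$ established in (\ref{21A}) makes $\lambda^\nu \cdot u^\nu = 0$ identically, and then passing to the limit $\lambda^\nu \to \widetilde{\lambda}_i(x)$ in the first term yields the claim.

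The one conceptual point worth isolating is the observation that the unknown correction terms $u^\nu \in H^0_{y^\nu}$ -- over which we have no control beyond their existence -- drop out of the inner product with $\lambda^\nu$ precisely because the $\lambda^\nu$ were chosen perpendicular to $H^0_{y^\nu}$. Once this is noticed, no further obstacle remains: the proof is a routine limit argument using the boundedness of $\{\lambda^\nu\}$ and continuity of the inner product.
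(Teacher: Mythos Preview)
Your proof is correct and follows essentially the same approach as the paper's: both split into the trivial case $s+1\le i\le r$ and the main case $1\le i\le s$, and in the latter both use the Glaeser-refinement property of $\widetilde{v}(x)$ to produce correction vectors in $H^0_{y^\nu}$ which are then killed by the orthogonality $\lambda^\nu\perp H^0_{y^\nu}$. The only differences are cosmetic (order of cases, notation $u^\nu$ versus $w^\nu_i(x)$).
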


{\it Remarks}: Since $\widetilde{\lambda}_1 (x), \cdots , \widetilde{\lambda}_r (x)$ form a basis for $\mathbb{R}^r$,~(\ref{22A}) completely specifies the vector $\widetilde{v} (x)$. Note that the points $y^\nu_i (x)$ and the vectors $\widetilde{\lambda}_i (x), \ \lambda^\nu_i (x)$ depend only on $\cH^0$, not on the vectors $v(y) \ \ (y \in Q).$ 

 {\it Proof}: 
First suppose that $1 \leq i \leq s$. Since $\widetilde{v} (x) $ belongs to the fiber $\widetilde{v} (x) + \widetilde{H}^0_x$ of the Glaeser refinement of $(v(y) + H^0_y)_{y \in Q}$, we know that  
 $\dist(\widetilde{v} (x), v(y) + H^0_y) \to 0$ as $y \to x \ (y \in Q)$. 
In particular,  
 $\dist(\widetilde{v} (x), v(y^\nu_i (x)) + H^0_{{y^\nu_i} (x)}) \to 0 \ $ as $ \ \nu \to \infty$. 
Hence, there exist vectors $w^\nu_i(x) \in H^0_{{y^\nu_i} (x)}$ such that 
$v(y^\nu_i (x)) + w^\nu_i (x) \to \widetilde{v} (x) \ $ as $\ \nu \to \infty$. 
Since also  
$\lambda^\nu_i (x)  \to \widetilde{\lambda}_i (x)$ as $\nu \to \infty$,
 it follows that  
$\widetilde{\lambda}_i (x) \cdot \widetilde{v} (x) = \lim_{v \to \infty}\limits \ \lambda_i^\nu (x) \cdot [v(y^\nu_i (x)) + w^\nu_i (x)].$ 
 However, since $w^\nu_i (x) \in H^0_{{y^\nu_i} (x)}$ and $\lambda_i^\nu (x) \perp H^0_{{y^\nu_i} (x)}$, 
we have $\lambda_i^\nu (x) \cdot w^\nu_i (x) = 0$ for each $\nu$. 

Therefore,  
$\widetilde{\lambda}_i^\nu (x) \cdot \widetilde{v} (x) = \lim_{\nu \to \infty}\limits \ \lambda_i^\nu (x) \cdot v (y^\nu_i (x))$, i.e.~(\ref{22}) holds for $1 \leq i \leq s$. 

On the other hand, suppose $s+1 \leq i \leq r$. Then since  
$\widetilde{\lambda}_i(x) \in \widetilde{H}^0_x$ and $\widetilde{v} (x) \perp \widetilde{H}^0_x$, we have 
$\widetilde{\lambda}_i (x) \ \cdot \ \widetilde{v} (x) = 0$. Also, in this case we defined $\lambda^\nu_i (x) = 0$, 
hence $\lambda^\nu_i (x) \cdot v (y^\nu_i (x)) = 0$ for each $\nu$. Therefore,  
$\widetilde{\lambda}_i(x) \cdot \widetilde{v} (x) = 0 = \lim_{\nu \to \infty}\limits \lambda^\nu_i (x) \cdot v (y^\nu_i (x))$, so that~(\ref{22A}) holds also for
$s+1 \leq i \leq r$. The proof of Lemma \ref{22A} is complete. \qed 

\subsection{Computation of iterated Glaeser refinements}{\ }
\label{subsection.3.2}

In this section, we apply the results of the preceding section to study iterated Glaeser refinements. 
Let  
$\cH = (v(x) + H^0_x)_{x \in Q}$
 be a bundle, given in the form (\ref{1A}). 
We assume that $\cH$ has a section. Therefore, $\cH$ and all its iterated Glaeser refinements have non-empty fibers. For $\ell \geq 0$, we write the $\ell^{th}$ iterated Glaeser refinement in the form  
\begin{equation}\label{2B}
\cH^{( \ell )} = (v^\ell (x) + H^{0, \ell}_x)_{x \in Q}, 
\end{equation}
 where 
$\cH^{0, \ell} = (H^{0, \ell}_x)_{x \in Q}$ is a homogeneous bundle, and
$v^\ell (x) \perp H^{0, \ell}_x \ \ \text{ for each } x \in Q.$
(Again, we use the standard inner product on $\mathbb{R}^r$.) 
In particular, $\cH^{( 0 )} = \cH$, and  
\begin{equation}\label{5B}
\cH^{0,0} = (H^0_x)_{x \in Q}, \text{ with } H^0_x \text{ as in }~(\ref{1}).
\end{equation}
 One checks easily that $\cH^{0, \ell}$ is the $\ell ^{th}$ iterated Glaeser refinement of $\cH^{0, 0}$. Our goal here is to give formulas computing $v^\ell (x)$ in terms of the $v(y) (y \in Q)$ in~(\ref{1}). 

We proceed by induction on $\ell$. For $\ell = 0$, we have  
\begin{equation}\label{6B}
v^0(x) = v (x)  \text{ for all } x \in Q.
\end{equation}
For $\ell \geq 1$, we apply the results of the preceding section, to pass from $(v^{\ell - 1}(x))_{x \in Q}\ \ $  to $\ \ (v^\ell (x))_{x \in Q}$.

\begin{claim}\label{7B--12B} We obtain points   
$
y_i^{\ell , \nu} (x) \in Q \qquad (\nu \geq 1, \ \ 1 \leq i \leq r, \ \ x \in Q);
$
and vectors  
$\widetilde{\lambda}^\ell_i (x) \in \mathbb{R}^r \quad (1 \leq i \leq r, \ \ x \in Q), \widetilde{\lambda}^{\ell, \nu}_i (x) \ \ \ (1 \leq i \leq r, \quad \nu \geq 1, \quad x \in Q)
$
with the following properties. 
\begin{enumerate}
\item  
 The above points and vectors depend only on $ \cH^{0,0},$  
 not on the family of vectors $ (v(x))_{x \in Q}$,
\item 
$\widetilde{\lambda}^\ell_1 (x), \cdots , \widetilde{\lambda}^\ell_r (x)$
 form a basis of $ \mathbb{R}^r,$ for each $ \ell \geq 1, x \in Q.$
\item 
$y_i^{\ell , \nu} (x) \to x $ as $ \nu  \to \infty $ for each $ \ell \geq 1, \ \ 1 \leq i \leq r, \ \ x \in Q.$
\item \label{12B}
$
[\widetilde{\lambda}^\ell_i(x) \cdot v^\ell(x)] = \lim_{\nu \to \infty}\limits [\widetilde{\lambda}^{\ell , \nu}_i (x) \cdot v^{\ell - 1} (y^{\ell , \nu}_i (x))]$
 for each $ \ell \geq 1,\  1 \leq i \leq r, \  x \in Q.$
\end{enumerate}
\end{claim}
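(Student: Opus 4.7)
The plan is to prove the claim by induction on $\ell$, treating it as a bookkeeping consequence of Lemmas \ref{p12.lem} and \ref{22A} applied repeatedly. The base case $\ell=0$ needs nothing beyond \eqref{6B}: no points or vectors are required at level $0$, and $v^0(x)=v(x)$ by definition.

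For the inductive step, I would assume the points and vectors have been constructed for levels $1,\dots,\ell-1$ so that (1)--(4) hold, and in particular so that $(v^{\ell-1}(x))_{x\in Q}$ can be read off from $(v(x))_{x\in Q}$ via finitely many inner products and iterated limits determined entirely by $\cH^{0,0}$. I would then apply the construction of Section \ref{subsection.3.1} not to $\cH$ but to the bundle $\cH^{(\ell-1)}=(v^{\ell-1}(x)+H^{0,\ell-1}_x)_{x\in Q}$, whose Glaeser refinement is $\cH^{(\ell)}=(v^\ell(x)+H^{0,\ell}_x)_{x\in Q}$. Lemma \ref{p12.lem}(\ref{12A}), applied to $\cH^{0,\ell-1}$ in place of $\cH^0$, produces a basis $\widetilde{\lambda}^\ell_1(x),\dots,\widetilde{\lambda}^\ell_s(x)$ of $(H^{0,\ell}_x)^\perp$ consisting of vectors in $\Lambda^{(\ell)}(x)$, which by Claim \ref{9A} carry sequences $y_i^{\ell,\nu}(x)\to x$ and $\lambda_i^{\ell,\nu}(x)\to\widetilde{\lambda}^\ell_i(x)$ with $\lambda_i^{\ell,\nu}(x)\perp H^{0,\ell-1}_{y_i^{\ell,\nu}(x)}$. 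I complete the basis by choosing any basis $\widetilde{\lambda}^\ell_{s+1}(x),\dots,\widetilde{\lambda}^\ell_r(x)$ of $H^{0,\ell}_x$, and by convention set $y_i^{\ell,\nu}(x):=x$ and $\lambda_i^{\ell,\nu}(x):=0$ for $s+1\le i\le r$. This gives properties (2) and (3) immediately.

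Property (4) is then exactly Lemma \ref{22A} applied to the bundle $\cH^{(\ell-1)}$: for $1\le i\le s$ the displayed identity is the conclusion of that lemma, while for $s+1\le i\le r$ both sides vanish by construction (since $\widetilde{\lambda}^\ell_i(x)\in H^{0,\ell}_x$ while $v^\ell(x)\perp H^{0,\ell}_x$, and $\lambda_i^{\ell,\nu}(x)=0$). Property (1) is the only genuinely inductive point and deserves careful attention: one must verify that the data produced above truly depends only on $\cH^{0,0}$. This follows because $\cH^{0,\ell-1}$ is the $(\ell-1)$-fold Glaeser refinement of $\cH^{0,0}$, a construction involving only the homogeneous fibers; hence the set $E^{(\ell)}$ and the fibers $\Lambda^{(\ell)}(x)$ defined as in \eqref{7A}--\eqref{8A} for $\cH^{0,\ell-1}$ are functions of $\cH^{0,0}$ alone, and the choices of basis and of convergent sequences producing $\widetilde{\lambda}^\ell_i(x)$, $\widetilde{\lambda}^{\ell,\nu}_i(x)$, $y_i^{\ell,\nu}(x)$ can be made without ever inspecting $v$.

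The main conceptual obstacle is precisely this independence statement: one needs to be confident that the selection involved in Lemma \ref{p12.lem}(\ref{12A}) and the accompanying approximating sequences in Claim \ref{9A} can be performed uniformly in $\cH^{0,0}$, i.e.\ that nothing in those arguments secretly uses the affine offset $v$. Everything else is a transcription of the two preceding lemmas into the indexed notation of the claim.
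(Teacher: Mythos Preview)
Your proposal is correct and follows exactly the approach the paper takes: the claim is stated immediately after the sentence ``For $\ell\geq 1$, we apply the results of the preceding section, to pass from $(v^{\ell-1}(x))_{x\in Q}$ to $(v^\ell(x))_{x\in Q}$,'' and your write-up is simply a careful unpacking of that application of Lemmas~\ref{p12.lem} and~\ref{22A} at each inductive step. The only cosmetic discrepancy is that what you call $\lambda_i^{\ell,\nu}(x)$ is denoted $\widetilde{\lambda}^{\ell,\nu}_i(x)$ in the claim, but you already flagged this as a transcription of notation.
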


The last formula
 computes the $v^\ell(x) \ \  (x \in Q)$ in terms of the $v^{\ell - 1}(y) \ (y \in Q)$ for $\ell \geq 1$, completing our induction on $\ell$. 

Note that we have defined the basis vectors $\widetilde{\lambda}^\ell_1(x), \cdots , \widetilde{\lambda}^\ell_r (x)$ only for $\ell \geq 1$. For $\ell = 0,$ it is convenient to use the standard basis vectors for $\mathbb{R}^r$ , i.e., we define 
\begin{equation}\label{13B}
\begin{split}
\widetilde{\lambda}^0_i (x) = (0, 0, \cdots, 0, 1, 0, \cdots, 0) \in \mathbb{R}^r,  
\text{with the 1 in the } i^{th} \text{ slot.}
\end{split}
\end{equation}
It is convenient also to set  
\begin{equation}\label{14B}
\xi^\ell_i (x) = \widetilde{\lambda}^\ell_i (x) \cdot v^\ell (x) \text{ for } x \in Q , \ \ell \geq 0,\quad 1 \leq i \leq r,
\end{equation}
\noindent and to expand $\widetilde{\lambda}^{\ell , \nu}_i (x) \in \mathbb{R}^r$ in terms of the basis $\widetilde{\lambda}^{\ell - 1}_1 (y), \cdots , \widetilde{\lambda}^{\ell - 1}_r (y) \quad  \text{ for } y = y^{\ell , \nu}_i (x)$. Thus, for suitable coefficients 
$\beta^{\ell , \nu}_{i j} (x) \in \mathbb{R} \ \ (\ell \geq 1, \ \nu \geq 1 , \ \ 1 \leq i \leq r , \ \ 1 \leq j \leq r, \ \ x \in Q)$ 
we have 
\begin{equation}\label{15B}
\widetilde{\lambda}^{\ell , \nu}_i (x) = \sum^r\limits_{ij}\limits \beta^{\ell , \nu}_{ij} (x) \cdot \widetilde{\lambda}^{\ell - 1}_j \big( y^{\ell , \nu}_i (x) \big) \ \text{ for }  x \in Q, \ \ell \geq 1 , \nu \geq 1, \ \  1 \leq i \leq r.
\end{equation}
Note that  
the coefficients $ \beta^{\ell , \nu}_{i j} (x)$ depend only on $ \cH^{0, 0},  $
not on the vectors $ v(y) (y \in Q)$.
 
Putting~(\ref{14B}) and~(\ref{15B}) into~(\ref{7B--12B}.\ref{12B}), 
we obtain a recurrence relation for the $\xi^\ell_i (x)$:
\begin{equation}\label{17B}
\xi^\ell_i (x) = \lim_{\nu \to \infty}\limits \ \sum^r\limits_{j=1}\limits \beta^{\ell , \nu}_{i j} (x) \ \cdot \ \xi^{\ell - 1}_j \big( y^{\ell , \nu}_i (x) \big)
\qtq{for} \ell \geq 1, \ 1 \leq i \leq r, \ x \in Q.
\end{equation}
 For $\ell = 0$,~(\ref{6B}),~(\ref{13B}) and~(\ref{14B}) give 
\begin{equation}\label{18B}
\noindent \xi^0_i (x) = [ i^{th} \text{ component of } v(x)].
\end{equation}
Since $\beta^{\ell , \nu}_{i j} (x) \text{ and } y^{\ell , \nu}_i (x)$ are independent of the vectors $v(y) (y \in Q)$, our formulas~(\ref{17B}),~(\ref{18}) express each $\xi^{\ell}_{i} (x)$ as an iterated limit in terms of the vectors $v(y) (y \in Q)$. In particular, the $\xi^{\ell}_{i} (x)$ depend linearly on the $v(y) \ (y \in Q)$. 

We are particularly interested in the case $\ell = 2r + 1$, since the bundle $\cH^{2r+1}$ is Glaeser stable, as we proved in section X. 

Since $\widetilde{\lambda}^{2r+1}_1 (x), \cdots , \widetilde{\lambda}^{2r+1}_r (x)$ form a basis of $\mathbb{R}^r$ for each $x \in Q$, there exist vectors 
 $w_1 (x), \cdots, w_r (x) \in \mathbb{R}^r  $ for each $x \in Q$, 
such that 
\begin{equation}\label{19B}
v = \sum^r_{i=1}   \widetilde{\lambda}_i^{2r+1}(x) \cdot v   w_i (x) \text{ for any vector } v \in \mathbb{R}^r, \text{ and for any } x \in Q.
\end{equation}
Note that  
the vectors $ w_1(x), \cdots , w_r (x) \in \mathbb{R}^r $ depend only on 
$\cH^{0,0},  $ not on the vectors $ v(y) (y \in Q)$.
 
Taking $v = v^{2r+1} (x)$ in~(\ref{19B}), and recalling~(\ref{14B}), we see that 
\begin{equation}\label{21B}
\noindent v^{2r+1} (x) = \sum^r\limits_{i=1}\limits \xi^{2r+1}_i (x) w_i (x) \qquad \text{ for each } x \in Q. 
\end{equation}
Thus, we determine the $\xi^\ell_i (x)$ by the recursion~(\ref{17B}),~(\ref{18B}), and then compute $v^{2r+1} (x)$ from formula~(\ref{21B}). Since also $(H^{0,2r + 1}_x)_{x \in Q}$ is simply the $(2r + 1)^{rst}$ 

 Glaeser refinement of $\cH^{0,0}$, we have succeeded in computing the Glaeser stable bundle 
$(v^{2r+1} (x) + H^{0, 2r + 1}_x)_{x \in Q}$ 
 in terms of the initial bundle as in~(\ref{1A}). 

Our next task is to give a formula for a section of a Glaeser stable bundle. To carry this out, we will use ``Whitney cubes'', a standard construction which we explain below. 

\subsection{Whitney cubes}{\ }
 \label{subsection.3.3}

In this section, for the reader's convenience, we review ``Whitney cubes'' 
(see \cite{malgrange, St, whi}). 
We will work with closed cubes $Q \subset \mathbb{R}^n$ whose sides are parallel to the coordinate axes. We write $\ctr(x)$ and $\delta_Q$ to denote the center and side length of $Q$, respectively; and we write $Q^*$ to denote the cube with center $\ctr(Q)$ and side length $3 \delta$. 

To ``bisect'' $Q$ is to write it as a union of $2^n$ subcubes, each with side length $\frac{1}{2} \delta_Q$, in the obvious way; we call those $2^n$ subcubes the ``children'' of $Q$. 

Fix a cube $Q^o$. The ``dyadic cubes'' are the cube $Q^o$, the children of $Q^o$, the children of the children of $Q^o$, and so forth. Each dyadic $Q$ is a subcube of $Q^o$. If $Q$ is a dyadic cube other than $Q^o$, then $Q$ is a child of one and only one dyadic cube, which we call $Q^+$. Note that $Q^+ \subset Q^*$. 

Now let $E_1$ be a non-empty closed subset of $Q^o$. A dyadic cube $Q \neq Q^o$ will be called a ``Whitney cube'' if it satisfies
\begin{eqnarray}
\dist (Q^*, E_1) \geq \delta_Q, \text{ and}\label{1C}\\
\dist ((Q^+)^*, E_1) < \delta_{Q^+}.\label{2C}
\end{eqnarray}
The next result gives a few basic properties of Whitney cubes. In this section, we write $c, C, C^\prime$, etc. to denote constants depending only on the dimension $n$. These symbols need not denote the same constant in different occurrences. 

\begin{lem}\label{p16.lem} For each Whitney cube $Q$, we have
\begin{enumerate}
\item \label{3C}
$\delta_Q \leq \dist (Q^*, E_1) \leq C \delta_Q;$  in particular,
\item \label{4C}
$Q^* \cap E_1 = \phi.$
\item \label{5C}
The union of all Whitney cubes is $ Q^o \ \smallsetminus \ E_1.$
\item \label{6C}
Any given $ y \in Q^o \smallsetminus E_1 $ has a neighborhood that meets $ Q^*$
 for at most $ C $ distinct Whitney cubes $ Q$.
\end{enumerate}
\end{lem}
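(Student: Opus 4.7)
The plan is to prove the four assertions essentially in order, using only the defining inequalities \eqref{1C} and \eqref{2C} together with elementary geometry of dyadic cubes. Throughout I will use that $Q \subset Q^+$ and $\delta_{Q^+} = 2\delta_Q$, and that $Q^*$, $(Q^+)^*$ are the concentric triples; in particular $Q^* \subset (Q^+)^{**}$, where the double asterisk just denotes a concentric dilate by another fixed factor, so that $\mathrm{diam}(Q^* \cup (Q^+)^*) \le C\delta_Q$.

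\textbf{Step 1 (Assertion \ref{3C}).} The lower bound $\delta_Q \le \dist(Q^*, E_1)$ is exactly \eqref{1C}. For the upper bound, pick $p \in Q^*$ and any $e \in E_1$ with $|q - e| < \delta_{Q^+}$ for some $q \in (Q^+)^*$, which exists by \eqref{2C}. Since $Q \subset Q^+$, the sets $Q^*$ and $(Q^+)^*$ share the point $\ctr(Q)$ (or in any case lie in a common cube of side length at most $C\delta_Q$), so $|p-q| \le C\delta_Q$. Hence
\[
\dist(p, E_1) \le |p-q| + |q-e| \le C\delta_Q + \delta_{Q^+} \le C'\delta_Q,
\]
and taking the supremum over $p \in Q^*$ gives $\dist(Q^*, E_1) \le C'\delta_Q$.

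\textbf{Step 2 (Assertion \ref{4C}).} Immediate from Step 1, since $\dist(Q^*, E_1) \ge \delta_Q > 0$.

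\textbf{Step 3 (Assertion \ref{5C}).} Every Whitney cube $Q$ is dyadic and contained in $Q^o$, and by Step 2 disjoint from $E_1$, so contained in $Q^o \setminus E_1$. For the reverse inclusion, fix $y \in Q^o \setminus E_1$ and set $d := \dist(y, E_1) > 0$. Consider the (unique) descending chain of dyadic cubes containing $y$. For dyadic cubes $Q$ containing $y$ with $\delta_Q$ small enough (concretely $\delta_Q < d/(C+1)$ where $C$ bounds $\mathrm{diam}(Q^*)/\delta_Q$), every point of $Q^*$ is within $C\delta_Q$ of $y$, hence at distance at least $d - C\delta_Q \ge \delta_Q$ from $E_1$, so \eqref{1C} holds. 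Thus \eqref{1C} holds for all sufficiently small dyadic ancestors of $y$; on the other hand \eqref{1C} fails for $Q^o$ itself as soon as $y$ is close enough to $E_1$ (and if it holds for $Q^o$ we simply look at its children). Let $Q$ be the largest dyadic cube containing $y$ for which \eqref{1C} holds; then $Q \ne Q^o$ and its parent $Q^+$ violates \eqref{1C}, which is precisely \eqref{2C} for $Q$. Hence $y \in Q$ and $Q$ is a Whitney cube.

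\textbf{Step 4 (Assertion \ref{6C}).} Fix $y \in Q^o \setminus E_1$ and set $d := \dist(y, E_1)$. If $Q$ is a Whitney cube whose $Q^*$ meets the ball $B(y, d/4)$, then by Step 1 we have $\delta_Q \le \dist(Q^*, E_1) \le C\delta_Q$, and $\dist(Q^*, E_1) \in [d/2, 2d]$ by the triangle inequality. Therefore $\delta_Q$ is constrained to a fixed dyadic range $[cd, Cd]$, leaving only finitely many (bounded by a constant depending on $n$) admissible side lengths. For each such side length, the number of dyadic cubes of that side length whose $3$-fold dilate meets $B(y, d/4)$ is bounded by a constant depending only on $n$. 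Multiplying gives the asserted bound, so $B(y, d/4)$ is the required neighborhood.

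The main bookkeeping difficulty lies in Step 3, where one must confirm that the largest $Q$ containing $y$ with \eqref{1C} actually exists inside the dyadic hierarchy below $Q^o$; this is immediate once the threshold $\delta_Q < d/(C+1)$ is made explicit, so no serious obstacle arises. \qed
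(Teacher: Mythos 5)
Your proof follows essentially the same route as the paper's: \eqref{3C} from the defining inequalities and the triangle inequality, \eqref{4C} from \eqref{3C}, \eqref{5C} by taking the maximal dyadic cube containing $y$ that satisfies \eqref{1C} and observing that its parent must violate \eqref{1C}, and \eqref{6C} by pinning $\delta_Q$ to a bounded dyadic range comparable to $\dist(y,E_1)$ and counting. One point in Step 3 needs repair: whether \eqref{1C} holds for $Q^o$ does not depend on $y$ at all, and your fallback (``look at its children'') would not work --- if $Q^o$ satisfied \eqref{1C}, the child of $Q^o$ containing $y$ could satisfy \eqref{1C} while its parent $Q^o$ also did, so \eqref{2C} would fail for every cube in the chain and $y$ would lie in no Whitney cube. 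The correct observation is that \eqref{1C} always fails for $Q^o$: $E_1$ is a non-empty subset of $Q^o\subset (Q^o)^*$, so $\dist((Q^o)^*,E_1)=0<\delta_{Q^o}$. With that in place the maximal cube is a proper dyadic subcube and your argument goes through. (A small quibble in Step 4: the claimed containment $\dist(Q^*,E_1)\in[d/2,2d]$ should be $[cd,Cd]$ with dimensional constants, since one must absorb $\operatorname{diam}(Q^*)\approx\delta_Q$ in the triangle inequality; this does not affect the conclusion.)
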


{\it Proof}: Estimates~(\ref{3C}) follow at once from (1) and (2); and (4) is immediate from (3). 

To check~(\ref{5C}), we note first that each Whitney cube $Q$ is contained in $Q^o \smallsetminus E_1$, thanks to~(\ref{4C}) and our earlier remark that every dyadic cube is contained in $Q^o$. Conversely, let $x \in Q^o \smallsetminus E_1$ be given. Any small enough dyadic cube $\widehat{Q}$ containing $x$ will satisfy~(\ref{1C}). Fix such a $\widehat{Q}$. There are only finitely many dyadic cubes $Q$ containing $x$ with side length greater than or equal to $\delta_{\widehat{Q}}$. Hence, there exists a dyadic cube $Q \ni x$ satisfying~(\ref{1C}), whose side length is at least as large as that of any other dyadic cube $Q^\prime \ni x$ satisfying~(\ref{1C}) . We know that $Q \neq Q^o$, since~(\ref{1C}) fails for $Q^o$. Hence, $Q$ has a dyadic parent $Q^+$. We know that~(\ref{1C}) fails for $Q^+$, since the side length of $Q^+$ is greater than that of $Q$. It follows that $Q$ satisfies~(\ref{2C}). Thus $Q \ni x$ is a Whitney cube, completing the proof of~(\ref{5C}). 

We turn our attention to~(\ref{6C}). Let $y \in Q^o \smallsetminus E_1$. We set $r=10^{-3}$ distance $(y, E_1)$, and we prove that there are at most $C$ distinct Whitney cubes $Q$ for which $Q^*$ meets the ball $B(x,r)$. 

Indeed, let $Q$ be such a Whitney cube. Then there exists $z \in B (y, r) \cap Q^*$. By~(\ref{3D}), we have
\begin{equation}\label{7C}
\delta_Q \leq \dist (z, E_1) \leq C \delta_Q.  
\end{equation}
Since $z \in B(y,r)$, we know that 
$\lvert \dist (z, E_1) - \dist (y, E_1) \ \ \rvert \leq 10^{-3} \dist (y, E_1)$. 
 Hence
\begin{equation}\label{8C}
(1 - 10^{-3}) \dist (y, E_1) \leq \dist (z, E_1) \leq (1 + 10^{-3}) \dist (y, E_1).
\end{equation}
From~(\ref{7C}),~(\ref{8C}) we learn that 
\begin{equation}\label{9C}
c \dist (y, E_1) \leq \delta_Q \leq C \dist (y, E_1).
\end{equation}
Since $z \in B(y,r) \cap Q^*$, we know also that 
\begin{equation}\label{10C}
\dist (y,Q^*) \leq \dist (y, E_1).
\end{equation}
\noindent For fixed $y$, there are at most $C$ distinct dyadic cubes that satisfy~(\ref{9C}),~(\ref{10C}). 

Thus,~(\ref{6}) holds and Lemma \ref{p16.lem} is proven. \qed 
\medskip

The next result provides a partition of unity adapted to the geometry of the Whitney cubes. 

\begin{lem}\label{p17.lem}
 There exists a collection of real-valued functions $\theta_Q$ on $Q^o$, indexed by the Whitney cubes $Q$, satisfying the following conditions.
\begin{enumerate}
\item \label{11C}
 Each $ \theta_Q $ is a non-negative continuous function on $ Q^o.$
\item \label{12C}
For each Whitney cube $ Q,$ the function $ \theta_Q $ is zero on $ Q^o \smallsetminus Q^*.$
\item \label{13C}
$ \textstyle{\sum_Q} \theta_Q = 1$  on $ Q^o \smallsetminus E_1.$
\end{enumerate}
\end{lem}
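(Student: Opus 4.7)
The plan is the standard Whitney partition-of-unity recipe: build a non-negative continuous bump $\widetilde{\theta}_Q$ supported in $Q^*$ and equal to $1$ on $Q$ for each Whitney cube $Q$, then normalize by dividing by the sum.

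\smallskip

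Concretely, I would take $\widetilde{\theta}_Q$ to be a product of one-dimensional tent functions in the coordinates, scaled so that the product equals $1$ on $Q$ and vanishes outside $Q^*$. Set $S(x) = \sum_Q \widetilde{\theta}_Q(x)$ for $x \in Q^o$, where the sum runs over all Whitney cubes. Part~(\ref{6C}) of Lemma~\ref{p16.lem} guarantees that at most $C$ of the summands are nonzero in a neighborhood of any $y \in Q^o \smallsetminus E_1$, so $S$ is a well-defined continuous function on $Q^o \smallsetminus E_1$. Part~(\ref{5C}) says that each such $x$ lies in some Whitney cube, so $\widetilde{\theta}_Q(x) = 1$ for that $Q$, and hence $S(x) \geq 1$. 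I then define
\[
\theta_Q(x) \;=\; \begin{cases} \widetilde{\theta}_Q(x)/S(x), & x \in Q^o \smallsetminus E_1,\\[2pt] 0, & x \in E_1. \end{cases}
\]
Non-negativity is clear; (\ref{12C}) follows from the fact that $\widetilde{\theta}_Q$ vanishes outside $Q^*$; (\ref{13C}) is the identity $\sum_Q \widetilde{\theta}_Q / S \equiv 1$ on $Q^o \smallsetminus E_1$; and continuity on $Q^o \smallsetminus E_1$ is immediate since there $\theta_Q$ is locally a quotient of a finite sum of continuous functions with denominator $\geq 1$.

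\smallskip

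The one delicate point, and the main obstacle, is continuity of each $\theta_Q$ at the points of $E_1$. For this I would use the separation estimate~(\ref{3C}) of Lemma~\ref{p16.lem}: $\dist(Q^*, E_1) \geq \delta_Q > 0$. Hence for any $y \in E_1$ and any $x$ with $|x - y| < \delta_Q$, we have $x \notin Q^*$ and therefore $\theta_Q(x) = 0$. Thus $\theta_Q$ vanishes on an entire neighborhood of $y$, giving continuity at $y$ and completing (\ref{11C}). Notably, local finiteness is not even needed near $E_1$: each single $\theta_Q$ already vanishes on a full neighborhood of each point of $E_1$, thanks to the fixed positive separation $\delta_Q$ between $Q^*$ and $E_1$.
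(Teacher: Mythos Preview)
Your proposal is correct and follows essentially the same route as the paper: construct bump functions $\widetilde{\theta}_Q$ supported in $Q^*$ and equal to $1$ on $Q$, then normalize by the locally finite sum using parts~(\ref{5C}) and~(\ref{6C}) of Lemma~\ref{p16.lem}. In fact you supply more detail than the paper does, explicitly verifying continuity at points of $E_1$ via the separation estimate~(\ref{3C}), whereas the paper simply asserts that the resulting $\theta_Q$ ``are easily seen to satisfy'' the required conditions.
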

 
{\it Proof}: Let $\widetilde{\theta}(x)$ be a non-negative, continuous function on $\mathbb{R}^n$, such that 
 $\widetilde{\theta}(x) = 1 \quad$ for $x = (x_{1}, \cdots , x_n)$ with $\max \ \{ \lvert x_1 \rvert, \cdots , \lvert x_n \rvert \} \leq \frac{1}{2}$ 
 and  
$\widetilde{\theta}(x) = 0 \quad \text{ for } x = (x_1, \cdots , x_n) \text{ with } \max \ \{ \lvert x_1 \ \rvert, \cdots , \lvert x_n \rvert \} \geq 1$. 

 For each Whitney cube $Q$, define  
 $\widetilde{\theta}_Q(x) = \widetilde{\theta} \big( \frac{x - \ctr(Q)}{\delta_Q} \big), \text{ for } x \in \mathbb{R}^n$. 
 Thus, $\widetilde{\theta}_Q$ is a non-negative continuous function on $\mathbb{R}^n$, equal to $1 \text{ on } Q$, and equal to $0 \text{ outside } Q^*$. 
\noindent It follows easily, thanks to~(\ref{5C}) and~(\ref{6C}), that $\sum_{Q^\prime}\limits \widetilde{\theta}_{Q^\prime}$ is a non-negative continuous function on $Q^o \smallsetminus E_1$, greater than or equal to one at every point of $Q^o \smallsetminus E_1$. 

Consequently, the functions $\theta_Q$, defined by  
 $\theta_Q (x) = \widetilde{\theta}_Q (x) \diagup   \sum_{Q^\prime}\limits \ \ \widetilde{\theta}_{Q^\prime} (x)  $ for $x \in Q^o \smallsetminus E_1$, 
 $\theta_Q (x) = 0 \ \  \text{ for } x \in E_1$, 
 are easily seen to satisfy~(\ref{11C}),~(\ref{12C}),~(\ref{13C}). \qed
\medskip

Additional basic properties of Whitney cubes, and sharper versions of Lemma 
\ref{p17.lem} may be found in \cite{malgrange, St, whi}. 

The partition of unity $\{\theta_Q\}$ on $Q^o \smallsetminus E_1$ is called the ``Whitney partition of unity''. 
 
\subsection{The Glaeser--stable case}
 \label{subsection.3.4}

 In this section, we suppose we are given a Glaeser-stable bundle with 
non-empty fibers, written in the form 
\begin{equation}\label{1D}
 \cH = (v(x) + H^0_x)_{x \in Q}, 
\end{equation}
 where 
$\cH^0 = (H^0_x)_{x \in Q} $  is a homogeneous bundle, and 
\begin{equation}\label{3D}
v(x) \bot H^0_x \quad \text{ for each } x \in Q.
\end{equation}
 (As before, we use the standard inner product on $\mathbb{R}^r$.) 
 Our goal here is to give a formula for a section $F$ of the bundle $\cH$. 
We will take  
\begin{eqnarray}
F(x) = \sum_{y \in S(x)}\limits A(x,y) v(y) \in \mathbb{R}^r \text{ for each } x \in Q, \text{ where}\label{4D}\\
 S(x) \subset Q \text{ is a finite set for each } x \in Q,
\text{ and}\label{5D}\\
A(x,y): \mathbb{R}^r \to \mathbb{R}^r \text{ is a linear map, for each } x \in Q, y \in S(x).\label{6D}
\end{eqnarray}
 Here  
the sets $ S(x)$ and the linear maps $ A(x,y)$ are determined by $ \cH^0; $
  they do not depend on the family of vectors $ (v(x))_{x \in Q}.$
 
We will establish the following result. 

\begin{thm}\label{p18.thm}
 We can pick the $S(x)$ and $A(x,y)$ so that~(\ref{5D}),~(\ref{6D}) hold, and the function $F: Q \to \mathbb{R}^r$, defined by~(\ref{4D}), is a section of the bundle $\cH$. Moreover, that section satisfies 
\begin{enumerate}
\item \label{8D}
$\max_{\quad x \in Q} \ \lvert F(x) \ \rvert \leq C \sup_{ x \in Q} 
\lvert v(x) \rvert , $ where $ C $  depends only on $ n $ and $ r.$
\item \label{9D}
Furthermore, each of the sets $ S(x)$ contains at most $ d $ points,  
where $ d $ depends only on $ n $ and $ r.$
\end{enumerate}
\end{thm}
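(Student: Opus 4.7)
The plan is to mimic the existence-of-sections argument (\ref{35.8}) but engineer each partition-of-unity step so that it produces a \emph{finite} linear combination of the $v(y)$'s whose length at any given $x$ is bounded only in terms of $n$ and $r$. Because $\cH$ is Glaeser-stable with non-empty fibers, we may, at each $x \in Q$, take $v(x) \in H_x$ itself as the representative used in the Michael partition-of-unity construction; so each intermediate Michael iterate is already a finite $\mathbb{R}$-linear combination of $v$-values. What must be engineered is the termination of the iteration after boundedly many steps.

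To do this, I would stratify $Q$ by the closed sets
\[
E_k := \{x \in Q : \dim H^0_x \leq k\}, \qquad k = -1, 0, 1, \dots, r,
\]
which are closed by the lower-semicontinuity $\dim H^0_x \leq \liminf_{y\to x}\dim H^0_y$ from (\ref{psl.3}) applied to the Glaeser-stable homogeneous bundle $\cH^0$; thus $\emptyset = E_{-1} \subset E_0 \subset \cdots \subset E_r = Q$. I would then define $F$ by induction on $k$. On $E_0$ the fiber $H_x = \{v(x)\}$ is a single point, forcing $F(x) = v(x)$. Assuming $F$ has been built on $E_{k-1}$ as a finite linear combination of $v$-values, I extend to $E_k$ by a Whitney decomposition of $Q$ relative to the closed set $E_{k-1}$, using Lemmas \ref{p16.lem} and \ref{p17.lem}. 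For each Whitney cube $Q'$ I pick a representative point $x_{Q'} \in Q'$ and set
\[
F(x) \;=\; \sum_{Q'} \theta_{Q'}(x)\bigl[v(x_{Q'}) + c_{Q'}(x)\bigr]
\qquad \text{for } x \in Q \setminus E_{k-1},
\]
where each correction $c_{Q'}(x) \in H^0_{x_{Q'}}$ is chosen (using the inductive knowledge of $F$ on $E_{k-1}$ together with the explicit formulas of Lemma \ref{22A}) so that this definition matches the previously defined $F$ continuously across $E_{k-1}$; the Glaeser-stability of $\cH$, giving $\dist(v(x_{Q'}), H_y) \to 0$ as $y \to x_{Q'}$, is exactly what makes this continuity possible. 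Restricting to $E_k$ yields $F$ on $E_k$.

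By local finiteness (Lemma \ref{p16.lem}(\ref{6C})), at each $x$ only a bounded number $C_n$ of Whitney cubes have $x \in (Q')^{*}$, so extending by one stratum multiplies the size of $S(x)$ by a factor depending only on $n$ (and on $r$, through the size of each correction $c_{Q'}$). After at most $r+1$ inductive steps we obtain $F$ on $Q = E_r$ with $|S(x)| \leq d$ depending only on $n,r$. The norm bound (\ref{8D}) follows by the same induction: since $\sum_{Q'}\theta_{Q'}\equiv 1$ on $Q\setminus E_{k-1}$ and $|v(x_{Q'})| \leq \sup_y |v(y)|$, and the corrections $c_{Q'}$ are controlled geometrically, we get $|F(x)| \leq C\sup_y |v(y)|$ with $C$ depending only on $n,r$.

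The main obstacle is the construction of the corrections $c_{Q'}(x) \in H^0_{x_{Q'}}$: one must match the already-built $F$ across the closed stratum $E_{k-1}$ while preserving both the linear dependence on the $v(y)$'s and the uniform bound on how many $v(y)$'s are used at any point. This rests on the iterated-limit formulas (\ref{17B}) of Section \ref{subsection.3.2} and on Lemma \ref{22A}, which express the orthogonal representative of any Glaeser-refinement fiber as a finite $\mathbb{R}$-linear combination of nearby $v(y)$-values. Threading those formulas through the stratification $E_0 \subset \cdots \subset E_r$, together with Whitney local finiteness, is what pins down the linear maps $A(x,y)$ and yields the constants in the theorem.
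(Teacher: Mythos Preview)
Your overall architecture---stratify $Q$ by fiber dimension, single out a lowest stratum, set $F=v$ there, and patch across the complement by a Whitney partition of unity, inducting on the number of strata---is exactly the paper's strategy.  The gap is in the inductive step, specifically in how you guarantee $F(x)\in H_x$.

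In your formula $F(x)=\sum_{Q'}\theta_{Q'}(x)\bigl[v(x_{Q'})+c_{Q'}(x)\bigr]$ with $c_{Q'}(x)\in H^0_{x_{Q'}}$, each summand lies in $H_{x_{Q'}}$, the fiber over the \emph{representative point}, not in $H_x$.  A convex combination of points of $H_{x_{Q'}}$ for varying $Q'$ has no reason to lie in $H_x$.  The paper avoids this by a genuinely recursive step: on each Whitney cube $Q_\nu^{*}$ (which misses the lowest stratum $E_1$) it applies the full inductive hypothesis to the translated bundle $\cH_\nu=(v(y)-v(x_\nu)+H^0_y)_{y\in Q_\nu^{*}\cap Q}$, which is Glaeser-stable with strictly fewer strata.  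This produces a section $F_\nu$ of $\cH_\nu$, so $F_\nu(x)+v(x_\nu)\in v(x)+H^0_x=H_x$ for every $x\in Q_\nu^{*}\cap Q$; now the convex combination $\sum_\nu\theta_\nu(x)[F_\nu(x)+v(x_\nu)]$ does lie in $H_x$.  Here $x_\nu$ is chosen in $E_1$ (not in $Q_\nu$), which is what makes $\sup_y|v_\nu(y)|$ small near $E_1$ and drives the continuity and norm bounds.

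Your proposed corrections $c_{Q'}(x)$, justified by reference to Lemma~\ref{22A} and the iterated-limit recursion~(\ref{17B}), do not supply this.  Those formulas compute the orthogonal representative $\widetilde v(x)$ of a Glaeser refinement, i.e.\ they pass from one bundle to a smaller one; they do not produce a section of the bundle you already have.  What is needed in the inductive step is a \emph{section} on each Whitney cube, and that is precisely what the inductive hypothesis (fewer strata) provides.  Once you replace your correction term by the inductively constructed $F_\nu$, the bound on $\#S(x)$ follows as you say (at most $C_n$ cubes touch $x$, each contributing $d_{\Lambda-1}$ points from $S_\nu(x)$ plus the single point $x_\nu$), and the norm bound follows from $|F_\nu(x)|\le C\sup_y|v_\nu(y)|$ together with~(\ref{48D}) and~(\ref{50D}).
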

 
\noindent {\it Note}: Since $v(x)$ is the shortest vector in $v(x) + H^0_x$ by~(\ref{3D}), it follows that  
$\sup_{\ x \in Q} \ \lvert v(x) \rvert \ = \sup_{\ x \in Q}$ distance $(0, v(x) + H^0_x) = \parallel \cH \parallel < \infty$; see our earlier discussion  of Michael's Theorem. 

\noindent {\it Proof}: Roughly speaking, the idea of our proof is as follows. We partition $Q$ into finitely many ``strata'', among which we single out the ``lowest stratum'' $E_1$. For $x \in E_1$, we simply set $F(x) = v(x)$. To define $F$ on $Q \smallsetminus E_1$, we cover $Q \smallsetminus E_1$ by Whitney cubes $Q_\nu. \text{ Each } Q{^*_{\nu}} \text{ fails to meet } E_1$, by definition, and therefore has fewer strata than $Q$. Hence, by induction on the number of strata, we can produce a formula for a section $F_{\nu}$ of the bundle $\cH$ restricted to $Q{^*_\nu}$. Patching together the $F_{\nu}$ by using the Whitney partition of unity, we define our section $F$ on $Q \smallsetminus E_1$, and complete the proof of Theorem \ref{p18.thm}. 

Let us begin our proof. For $k=0, 1, \cdots, r,$ the $k^{th}$ ``stratum'' of $\cH$ is defined by 
\begin{equation}\label{10D}
E(k) = \{x \in Q: \dim \ H^0_x = k \}.
\end{equation}
\noindent The ``number of strata'' of $\cH$ is defined as the number of non-empty $E(k)$; this number is at least 1 and at most $r+1$. 
\noindent We write $E_1$ to denote the stratum $E(k_{\min})$, where $k_{\min}$ is the least $k$ such that $E(k)$ is non-empty. We call $E_1$ the ``lowest stratum''. 

We will prove Theorem  \ref{p18.thm} by induction on the number of strata, allowing the constants $C \text{ and } d$ on~(\ref{8D}),~(\ref{9D}), to depend on the  number of strata, as well as on $n \text{ and } r$.  Since the number of strata is at most $r+1$, such an induction will yield Theorem  \ref{p18.thm} as stated. 

Thus, we fix a positive integer $\Lambda$, and assume the inductive hypothesis: 
\begin{enumerate}
\item[(H1)] Theorem  \ref{p18.thm} holds, with constants 
$ C_{\Lambda - 1}, d_{\Lambda - 1} $  in~(\ref{8}),~(\ref{9}),
whenever the number of strata is less than $ \Lambda.$
\end{enumerate}

We will then prove Theorem  \ref{p18.thm}, with constants $C_\Lambda, d_\Lambda$ in~(\ref{8D}),~(\ref{9D}), whenever the number of strata is equal to $\Lambda$. Here, $C_\Lambda$ and $d_\Lambda$ are determined by $C_{\Lambda -1}, d_{\Lambda -1}, n \text{ and } r$. To do so, we start with~(\ref{1D}),~(\ref{3}), and assume that  
\begin{enumerate}
\item[(H2)] The number of strata of $ \cH $ is equal to $ \Lambda.$
\end{enumerate}
We must produce sets $S(x)$ and linear maps $A(x,y)$ satisfying~(\ref{5D}) $\cdots$~(\ref{9D}), with constants $C_\Lambda, d_\Lambda$ depending only on $C_{\Lambda-1}, d_{\Lambda -1}, n, r. $ This will complete our induction, and establish Theorem  \ref{p18.thm}. 

 For the rest of the proof of Theorem  \ref{p18.thm}, we write $c, C, C^\prime$, etc. to denote constants determined by $C_{\Lambda - 1}, d_{\Lambda -1}, n, r$. These symbols need not denote the same constant in different occurrences. 

The following useful remark is a simple consequence of our assumption that the bundle~(\ref{1D}) is Glaeser stable. Let $ x \in E(k), $ and let
\begin{equation}\label{13D}
 v_1, \cdots , v_{k+1} \ \in \ \ v (x) + H^0_x 
\end{equation}
 be the  vertices of a non-degenerate affine $ k$-simplex in 
$ \mathbb{R}^r. $
Given $ \epsilon > 0 \text{ there exists } \delta > 0 $ such that for any 
$ y \in Q \cap B(x, \delta), $
there exist $ v^\prime_1 , \cdots , v^\prime_{k+1} \in v(y) + H^0_y $ satisfying 
$ \lvert v^\prime_i - v_i  \rvert \ < \ \epsilon$ for each $ i$. 
Here, as usual, $B(x, \delta)$ denotes the ball of radius $\delta$ about $x$. 

Taking $\epsilon$ small enough in~(\ref{13D}), we conclude that $v^\prime_1, \cdots , v^\prime_{k+1} \in v(y) + H^0_y$ are the vertices of a non-degenerate affine $k$-simplex in $\mathbb{R}^r$. Therefore,~(\ref{13D}) yields at once 
that if $ x \in E(k)$, then $ \ \dim
 \ H^0_y \geq k $ for all 
$ y \in Q $
sufficiently close to $ x$.
 In particular, 
the lowest stratum $ E_1 $ is a non-empty closed subset of $ Q$.
  Also, for each $k=0, 1, 2, \cdots , r$,~(\ref{13D}) shows that
the map
\begin{equation}\label{16D}
 x \mapsto v(x) + H^0_x 
\end{equation}
 is continuous from $ E(k)$ to the space of  
 all affine $ k$-dimensional subspaces of $ \mathbb{R}^r$.

Since each $H^0_x$  is a vector subspace of $\mathbb{R}^r$,
 we learn from~(\ref{3D}) and~(\ref{16D}) that 
the map $ x \mapsto v(x) $ is continuous on each $ E(k)$.
  In particular, 
\begin{equation}\label{18D}
 x \mapsto v(x) \text{ is continuous on } E_1.
\end{equation}
 
  Next, we introduce the Whitney cubes $\{Q_\nu \}$ and the Whitney partition of unity $\{\theta_\nu\}$ for the closed set $E_1 \subset Q$. From the previous section, we have the following results. We write $\delta_\nu$ for the side length of the Whitney cube $Q_\nu$. 
Note that
\begin{eqnarray}
\delta_\nu \leq \dist (Q^*_\nu, E_1) \leq C \delta_\nu \text{ for each } \nu.
\label{19D}\\
Q^*_\nu \cap E_1 = \phi \quad \text{ for each } \nu.\label{20D}\\
\bigcup_\nu\limits Q_\nu = Q \smallsetminus E_1.\label{21D}\\
\begin{array}{c}
\text{ Any given } y \in Q \smallsetminus E_1 \text{ has a neighborhood that meets }  \\
Q^*_\nu \text{ for at most } C \text{ distinct } Q_\nu.\label{22D}
\end{array}\\
\begin{array}{c}\text{Each } \theta_\nu \text{ is a non-negative continuous function on } Q,  \\
\text{vanishing outside } Q \cap Q_\nu^*.\end{array}\label{23D}\\
\sum_\nu\limits \theta_\nu(x) = 1 \quad \text{ if } x \in Q \smallsetminus E_1, \quad 0 \text{ if } x \in E_1.\label{24D}
\end{eqnarray}
Thanks to~(\ref{19}), we can pick points 
$ x_\nu \in E_1$
  such that 
\begin{equation}\label{26D}
\dist (x_\nu, Q^*_\nu) \leq C \delta_\nu.
\end{equation}
We next prove a continuity property of the fibers $v(x) + H^0_x$. 

\begin{lem}\label{p20.lem} Given $x \in E_1$ and $\epsilon > 0$, there exists
 $\delta > 0$ for which the following holds. 
Let $Q_\nu$ be a Whitney cube such that distance $(x, Q^*_\nu) < \delta$.  
 Then 
\begin{enumerate} 
\item \label{27D}
$ \lvert v(x) - v(x_\nu) \rvert < \epsilon $, and 
\item \label{28D}
$\dist (v(x), v(y) + H^0_y) < \epsilon $ for all $ y \in Q^*_\nu \cap Q.$
\end{enumerate}
\end{lem}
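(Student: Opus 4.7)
The plan is to chase distances: the hypothesis $\dist(x, Q^*_\nu) < \delta$ with $x \in E_1$ forces the Whitney cube $Q_\nu$ to be small, hence all points of $Q^*_\nu \cap Q$ and the base point $x_\nu$ lie close to $x$; then two continuity properties — the continuity of $v$ on $E_1$ from \eqref{18D}, and the Glaeser stability of $\cH$ — give both conclusions.

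First I would bound $\delta_\nu$ in terms of $\delta$. Since $x \in E_1$, the hypothesis gives $\dist(Q^*_\nu, E_1) \le \dist(Q^*_\nu, x) < \delta$, and combined with the Whitney estimate $\delta_\nu \le \dist(Q^*_\nu, E_1)$ from \eqref{19D} this yields $\delta_\nu < \delta$. Since $\diam Q^*_\nu \le C\delta_\nu \le C\delta$, every $y \in Q^*_\nu$ satisfies
\[
|y - x| \;\le\; \diam Q^*_\nu \;+\; \dist(x, Q^*_\nu) \;\le\; (C+1)\,\delta,
\]
and similarly $|x_\nu - x| \le \dist(x_\nu, Q^*_\nu) + \diam Q^*_\nu + \dist(x, Q^*_\nu) \le C'\delta$ using \eqref{26D}.

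For \eqref{27D}: since $x, x_\nu \in E_1$ and $v$ is continuous on $E_1$ by \eqref{18D}, there exists $\delta_1 > 0$ (depending on $x$ and $\epsilon$) such that $|x' - x| < \delta_1$ with $x' \in E_1$ forces $|v(x) - v(x')| < \epsilon$. Choosing $\delta < \delta_1/C'$ gives the bound.

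For \eqref{28D}: I will use that $\cH$ is its own Glaeser refinement, so by \eqref{thfi.4} applied to the point $v(x) \in v(x) + H^0_x$, we have $\dist(v(x), v(y) + H^0_y) \to 0$ as $y \to x$ within $Q$. Hence there exists $\delta_2 > 0$ such that $|y - x| < \delta_2$ and $y \in Q$ imply $\dist(v(x), v(y) + H^0_y) < \epsilon$. Choosing $\delta < \delta_2/(C+1)$ gives the bound for all $y \in Q^*_\nu \cap Q$. Taking $\delta = \min\{\delta_1/C', \delta_2/(C+1)\}$ simultaneously secures both conclusions.

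There is no real obstacle — the proof is a diagram chase through the Whitney-cube geometry — the only thing to be careful about is that the constants $C, C'$ come from the dimension alone (from Lemma \ref{p16.lem} and \eqref{26D}), so picking $\delta$ small depending only on $\epsilon$, $x$, and these absolute constants is legitimate.
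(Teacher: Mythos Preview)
Your proof is correct and follows essentially the same route as the paper: bound $\delta_\nu<\delta$ via \eqref{19D} and $x\in E_1$, then use the triangle inequality through $Q^*_\nu$ together with \eqref{26D} to get $|x-x_\nu|\le C'\delta$ and $|y-x|\le C'\delta$, and finally invoke \eqref{18D} for part \eqref{27D} and Glaeser stability for part \eqref{28D}. The only cosmetic difference is that you write down explicit choices $\delta=\min\{\delta_1/C',\delta_2/(C+1)\}$, whereas the paper simply says ``take $\delta$ small enough''.
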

 
 {\it Proof}: Fix $x \in E_1$ and $\epsilon > 0$. Let $\delta > 0$ be a small enough number, to be picked later. Let $Q_\nu$ be a Whitney cube such that  
\begin{equation}\label{29D}
\dist (x, Q^*_\nu)< \delta.
\end{equation}
Then, by~(\ref{19}), we have 
\begin{equation}\label{30D}
\delta_\nu \leq \dist (E_1, Q^*_\nu) \leq \dist (x, Q^*_\nu)< \delta,
\end{equation}
hence,~(\ref{26D}) and~(\ref{29D}) yield the estimates  
\begin{equation}\label{31D}
\begin{split}
\lvert x - x_\nu \rvert \ \leq \ \dist (x, Q^*_\nu) + \text{ diameter } (Q^*_\nu) + \dist (Q^*_\nu, x_\nu)  
\leq \delta + C \delta_\nu \leq C^\prime \delta.
\end{split}
\end{equation}
Since $x$ and $x_\nu$ belong to $E_1$,~(\ref{31D}) implies~(\ref{27D}) thanks to~(\ref{18D}), provided we take $\delta$ small enough. Also, for any $y \in Q^*_\nu \cap  Q$, we learn from ~(\ref{29D}),~(\ref{30D}) that  

  $\lvert y - x \rvert \ \leq \text{ diameter } (Q^*_\nu) + \dist(x, Q^*_\nu)< C \delta_\nu + C \delta \leq C^\prime \delta$.  

Since the bundle $(v(z) + H^0_z)_{z \in Q}$ is Glaeser stable, it follows that~(\ref{28}) holds, provided we take $\delta$ small enough. 

We now pick $\delta > 0$ small enough that the above arguments go through. Then~(\ref{27}) and~(\ref{28}) hold. The proof of Lemma \ref{p20.lem}  is complete. \qed

We return to the proof of Theorem  \ref{p18.thm}. For each Whitney cube $Q_\nu$, we prepare to apply our inductive hypothesis~(H1) to the family of affine subspaces  
\begin{equation}\label{32D}
\cH_\nu = (v(y) - v(x_\nu) + H^0_y)_{y \in Q^*_\nu \cap Q}.
\end{equation}
Since $Q^*_\nu \cap Q$ is a closed rectangular box, but not necessarily a cube, it may happen that~(\ref{32D}) fails to be a bundle. The cure is simply to fix an affine map $\rho_\nu: \mathbb{R}^n \to \mathbb{R}^n$, such that $\rho_\nu(Q^o) = Q^*_\nu \cap Q$, where $Q^o$ denotes the unit cube. 

The family of affine spaces 
\begin{equation}\label{33D}
\check{\cH}_\nu = \big( v(\rho_\nu \check{y}) - v(x_\nu) + H^0_{\rho_\nu \check{y}} \big)_{\check{y} \in Q^o}
\qtq{is then a bundle.} 
\end{equation}
We write~(\ref{32D}) in the form
\begin{equation}\label{34D}
\cH_\nu = \big( v_\nu(y) + H^0_y \big)_{y \in Q^*_\nu \cap Q}, \text{ where} 
\end{equation}
\begin{equation}\label{35D}
v_\nu(y) \bot H^0_y \text{ for each } y \in Q^*_\nu \cap Q.
\end{equation}
The vector $v_\nu(y)$ is given by 
\begin{equation}\label{36D}
v_\nu(y) = \Pi_y v(y) - \Pi_y v(x_\nu) \text{ for } y \in Q^*_\nu \cap Q, \text{ where}
\end{equation}
$\Pi_y$ denotes the orthogonal projection from $\mathbb{R}^r$ onto the orthocomplement of $H^0_y$. 

Passing to the bundle $\check{\cH}_\nu$, we find that  
\begin{equation}\label{37D}
\check{\cH}_\nu = ( \check{v}_\nu (\check{y}) + H^0_{\rho_\nu \check{y}})_{\check{y} \in Q^o}, \text{ with}
\end{equation}
\begin{equation}\label{38D}
\check{v}_\nu (\check{y}) \bot H^0_{\rho_\nu \check{y}} \text{ for each } \check{y} \in Q^o.
\end{equation}
Here, $\check{v}_\nu (\check{y})$ is given by 
\begin{equation}\label{39D}
\check{v}_\nu (\check{y}) = v_\nu(\rho_\nu \check{y}).
\end{equation}
It is easy to check that $\check{\cH}_\nu$ is a Glaeser stable bundle with non-empty fibers. Moreover, from~(\ref{12}) and~(\ref{20}), we see that the function $y \mapsto \dim H^0_y$ takes at most $\Lambda - 1$ values as $y$ ranges over $Q^*_\nu \cap Q$. Therefore, the bundle $\check{\cH}_\nu$ has at most 
$\Lambda - 1$ strata.

Thus, our inductive hypothesis~(\ref{11}) applies to the bundle $\check{\cH}_\nu$. Consequently, we obtain the following results for the family of affine spaces $\cH_\nu$. 

We obtain sets 
\begin{equation}\label{40D}
S_\nu (x) \subset Q^*_\nu \cap Q \quad \text{ for each } x \in Q^*_\nu \cap Q,
\end{equation}
   and linear maps 
\begin{equation}\label{41D}
A_\nu (x,y) \ : \ \mathbb{R}^r \to \mathbb{R}^r  \text{ for each } x \in Q^*_\nu \cap Q, y \in S_\nu(x).
\end{equation}
\begin{equation}\label{42D}
\text{The sets } S_\nu(x) \text{ each contain at most } C \text{ points.}
\end{equation}
\begin{equation}\label{43D}
\text{The } S_\nu(x) \text{ and } A_\nu(x,y) \text{ are determined by } (H^0_z)_{z \in Q^*_\nu \cap Q}.
\end{equation}
Moreover, setting  
\begin{equation}\label{44D}
F_\nu(x) = \sum_{y \in S_\nu (x)}\limits \ A_\nu(x,y) v_\nu(y) \quad \text{ for } \quad x \in Q^*_\nu \cap Q ,
\end{equation}
we find that 
\begin{equation}\label{45D}
F_\nu \text{ is continuous on }  Q^*_\nu \cap Q ,
\end{equation}
\begin{equation}\label{46D}
F_\nu(x) \in v_\nu (x) + H^0_x = v(x) - v(x_\nu) + H^0_x \quad \text{ for each } \ \ x \in Q^*_\nu \cap Q ,
\end{equation}
and
\begin{equation}\label{47D}
\max_{\quad x \in Q^*_\nu \cap Q}\limits \quad \bigm| F_\nu(x) \bigm| \ \leq C \sup_{\quad y \in Q^*_\nu \cap Q} \bigm| v_\nu(y) \bigm|.
\end{equation}
Let us estimate the right-hand side of~(\ref{47D}). For any $Q_\nu$, formula~(\ref{36D}) shows that  
\begin{equation}\label{48D}
\sup_{\quad y \in Q^*_\nu \cap Q} \bigm| v_\nu(y) \bigm| \ \leq 2  \sup_{\quad y \in Q}  \bigm| v(y) \bigm|.
\end{equation}
Moreover, let $x \in E_1, \ \epsilon > 0$ be given, and let $\delta$ be as in 
Lemma \ref{p20.lem}. Given any $Q_\nu$ such that distance $(x, Q^*_\nu) < \delta$, and given any $y \in Q^*_\nu \cap Q$, Lemma  \ref{p20.lem} tells us that 

  $\bigm| v(x) - v(x_\nu) \bigm| \ < \ \epsilon$  
  and 
distance $(v(x), v(y) + H^0_y) < \epsilon$. 

  Consequently,  
\begin{equation}\label{49D}
\dist (0, v(y) - v(x_\nu) + H^0_y) < 2 \epsilon \text{ and } \bigm| v(x) - v(x_\nu) \bigm| < \epsilon.
\end{equation}
From~(\ref{32D}),~(\ref{34D}),~(\ref{35D}), we see that $v_\nu(y)$ is the shortest vector in $v(y) - v(x_\nu) + H^0_y$.  Hence,~(\ref{49D}) yields the estimate  
  $\bigm| v_\nu(y) \bigm| \ < 2 \epsilon$. 

Therefore, we obtain the following result. 
 Let $ x \in E_1 $ and $ \epsilon > 0 $ be given. Let $ \delta $ be as in 
Lemma \ref{p20.lem}. Then, for any $ Q_\nu $ such that distance $ (x, Q^*_\nu) < \delta,$
 we have 
\begin{equation}\label{50D}
\sup_{\quad y \in Q^*_{\nu} \cap Q}\limits \quad \bigm| v_\nu(y) \bigm| \ \leq \ 2 \epsilon , \ \text{ and } \ \bigm| v(x) - v(x_\nu) \bigm| < \epsilon.
\end{equation}
From~(\ref{47D}),~(\ref{48D}),~(\ref{50D}), we see that  
\begin{equation}\label{51D}
\max_{\qquad x \in Q^*_\nu \cap \ Q}\limits \ \bigm| F_\nu(x) \bigm| \leq C \sup_{\qquad y \in Q} \bigm| v(y) \bigm|
\end{equation}
 for each $\nu$, and that the following holds. 
 Let $ x \in E_1 $ and $ \epsilon > 0 $ be given.
 Let $ \delta  $ be be as in Lemma \ref{p20.lem}, and let   
$y \in Q^*_\nu \cap Q \cap B(x, \delta)$.  Then 
 \begin{equation}\label{52D}
  \bigm|  F_\nu(y)  \bigm| \ \leq C \epsilon , \text{ and }  \bigm|  v(x) - v(x_\nu) \bigm| < \epsilon.
\end{equation}
We now define a map $F: Q \to \mathbb{R}^r$, by setting  
\begin{equation}\label{53D}
F(x) = v(x) \text{ for } x \in E_1, \text{ and }
\end{equation}
\begin{equation}\label{54D}
F(x) = \sum_\nu\limits \ \theta_\nu(x) \ \cdot \   F_\nu(x) + v(x_\nu)   \  \text{ for } x \in Q \smallsetminus E_1.
\end{equation}
Note that~(\ref{54D}) makes sense, because the sum contains finitely many non-zero terms, and because $\theta_\nu = 0$ outside the set where $F_\nu$ is defined. 

  We will show that $F$ is given in terms of the $(v(y))_{y \in Q}$ by a formula of the form~(\ref{4D}), and that conditions~(\ref{5D}) $\cdots$~(\ref{9D}) are satisfied. As we noted just after~(H2),
 this will complete our induction on $\Lambda$, and establish Theorem  \ref{p18.thm}. 

First, we check that our $F(x)$ is given by~(\ref{4D}), for suitable $S(x), A(x,y)$. We proceed by cases. If $x \in E_1$, then already~(\ref{53D}) has the form~(\ref{4D}), with  
\begin{equation}\label{55D}
S(x) = \{ x \} \text{ and } A(x, y) = \text{ identity.}
\end{equation}
Suppose $x \in Q \smallsetminus E_1$. Then $F(x)$ is defined by~(\ref{54D}). 

  Thanks to~(\ref{23D}), we may restrict the sum in~(\ref{54D}) to those $\nu$ such that $x \in Q^*_\nu$. For each such $\nu$, we substitute~(\ref{36D}) into~(\ref{44D}), and then substitute the resulting formula for $F_\nu(x)$ into~(\ref{54D}). We find that  
\begin{equation}\label{56D}
F(x) = \sum_{Q_\nu^* \ni x}\limits \ \theta_\nu(x) \ \cdot \   v(x_\nu) + \sum_{y \in S_\nu (x)}\limits \ A_\nu(x,y) \ \cdot \ \big( \Pi_y v(y) - \Pi_y v(x_\nu) \big)  
\end{equation}
 
which is a formula of the form~(\ref{4}). 

Thus, in all cases, $F$ is given by a formula~(\ref{4}). Moreover, examining~(\ref{55D}) and~(\ref{56D}) (and recalling~(\ref{40D}) $\cdots$~(\ref{43D})
as well as~(\ref{22})), we see that~(\ref{5}),~(\ref{6}),~(\ref{7}) hold, and that in our formula~(\ref{4}) for $F$, each $S(x)$ contains at most $C$ points. Thus~(\ref{9}) holds, with a suitable $d_\Lambda$ in place of $d$. 

It remains to prove~(\ref{8}), and to show that our $F$ is a section of the bundle $\cH$. Thus, we must establish the following. 
\begin{eqnarray}
F: Q \to \mathbb{R}^r \text{ is continuous.}\label{57D}\\
F(x) \in \ v(x) + H^0_x \text{ for each } x \in Q.\label{58D}\\
\big| F(x) \big| \leq C \sup_{y \in Q} \big| v(y) \big| \text{ for each } x \in Q.\label{59D}
\end{eqnarray}
 
The proof of Theorem  \ref{p18.thm} is reduced to proving~(\ref{57D}),~(\ref{58D}),~(\ref{59D}). 

Let us prove~(\ref{57D}). Fix $x \in Q$; we show that $F$ is continuous at $x$. If $x \notin E_1$, then~(\ref{22D}),~(\ref{23D}),~(\ref{45D}) and~(\ref{54D}) easily imply that $F$ is continuous at $x$. 

On the other hand, suppose $x \in E_1$. To show that $F$ is continuous at $x$, we must prove that 
\begin{equation}\label{60D}
\lim_{y \to x,  y \in E_1}
 v(y) = v(x)\qtq{and that}
\end{equation}
\begin{equation}\label{61D}
\lim_{y \to x,  y \in Q \smallsetminus E_1}
      \ \ \sum_\nu\limits \ \theta_\nu(y)   F_\nu(y) + v (x_\nu)   = v(x).
\end{equation}
We obtain~(\ref{60D}) as an immediate consequence of~(\ref{18D}). To prove~(\ref{61D}), we bring in~(\ref{52D}). Let $\epsilon > 0$, and let $\delta >0$ arise from $ \epsilon, x$ as in~(\ref{52D}). Let $y \in Q \smallsetminus E_1$; and suppose $\big| y - x \big| < \delta$. For each $\nu$ such that $y \in Q^*_\nu$,~(\ref{52D}) gives 
\begin{equation}\label{62D}
\big| \theta_\nu(y) \ \cdot \ [F_\nu(y) + v(x_\nu) - v(x) ] \big| \leq C \epsilon \theta_\nu(y).
\end{equation}
For each $\nu$ such that $y \notin Q_\nu^*$,~(\ref{62D}) holds trivially, since $\theta_\nu(y) = 0$. Thus,~(\ref{62D}) holds for all $\nu$. 
Summing on $\nu$, and recalling~(\ref{24D}), we conclude that  

  $\big| \sum_\nu\limits \ \theta_\nu(y) \ \cdot \   F_\nu(y) + v(x_\nu)   - v(x) \big| \leq C \epsilon$. 

This holds for any $y \in Q \smallsetminus E_1$ such that $\big| y - x \big| < \delta$. The proof of~(\ref{61D}) is complete. Thus,~(\ref{57D}) is now proven. 

To prove~(\ref{58D}), we again proceed by cases. If $x \in E_1$, then~(\ref{58D}) holds trivially, by~(\ref{53D}). On the other hand, suppose $x \in Q \smallsetminus E_1$. Then~(\ref{46D}) gives $[ F_\nu(x) + v(x_\nu) ] \in \ v(x) + H^0_x \quad $ for each $\nu$ such that $Q_\nu^* \ni x$. 

  Since also $\theta_\nu(x) = 0 \quad $ for $x \notin Q^*_\nu$, and since $\sum_\nu\limits \theta_\nu(x) = 1$, 
  it follows that 

  $\sum_\nu\limits \theta_\nu(x) \ \cdot \ [F_\nu(x) + v(x_\nu)] \ \in \ v(x) + H^0_x$, i.e., 

  $F(x) \in \ v(x) + H^0_x$. Thus,~(\ref{58D}) holds in all cases. 

Finally, we check~(\ref{59D}). For $x \in E_1$,~(\ref{59D}) is trivial from the definition~(\ref{53D}). On the other hand, suppose $x \in Q \smallsetminus E_1$. For each $\nu$ such that $Q^*_\nu \ni x$,~(\ref{51D}) gives
\begin{equation}\label{63D}
\big| \theta_\nu(x) \ \cdot \ [F_\nu(x) + v(x_\nu)] \big| \leq C \theta_\nu(x) \cdot \sup_{\quad y \in Q} \big| v(y) \big|.
\end{equation}
Estimate~(\ref{63D}) also holds trivially for $x \notin Q^*_\nu$, since then $\theta_\nu(x) = 0$. 
  Thus,~(\ref{63D}) holds for all $\nu$. Summing on $\nu$, we find that 
$$
\big| F(x) \big| \leq \sum_\nu \big| \theta_\nu(x)  \cdot  [F_\nu(x) + v(x_\nu)] \big| \ \leq \ C \sup_{ y \in Q} \big| v(y) \big|  \cdot \sum_\nu \theta_\nu(x) 
  = C \sup_{y \in Q} \big| v(y) \big|,
$$
 thanks to~(\ref{24D}) and~(\ref{54D}). 

  Thus~(\ref{59D}) holds in all cases. The proof of Theorem  \ref{p18.thm} is complete.\qed
\medskip

  Let $\widetilde{F}$ be any section of the bundle $\cH$ in Theorem  \ref{p18.thm}. For each $x \in Q$, we have $\lvert v (x) \rvert \leq \lvert \widetilde{F} (x) \rvert$, since $\widetilde{F} (x) \in v (x) + H^0_x$ and $v (x) \bot H^0_x$. Therefore, the section $F$ produced by Theorem  \ref{p18.thm} satisfies the estimate  
$\max_{x \in Q} \lvert F (x) \rvert \leq C \cdot \max_{x \in Q} \lvert \widetilde{F} (x) \rvert$, where $C$ depends only on $n, r$.

\subsection{Computing the section of a bundle}{\ }
 \label{subsection.3.5}

Here, we combine our results from the last few sections. 
  Let 
\begin{equation}\label{1E}
 \cH = (v(x) + H^0_x)_{x \in Q} \text{ be a bundle, where}
\end{equation}
\begin{equation}\label{2E}
 \cH^0 = (H^0_x)_{x \in Q} \text{ is a homogeneous bundle, and}
\end{equation}
\begin{equation}\label{3E}
 v(x) \bot H^0_x \quad \text{ for each } x \in Q.
\end{equation}
  Suppose $\cH$ has a section. Then the iterated Glaeser refinements of $\cH$ have non-empty fibers, and may therefore be written as 
\begin{equation}\label{4E}
 \cH^\ell = (v^\ell (x) + H^{0, \ell}_x)_{x \in Q} \text{ where}
\end{equation}
\begin{equation}\label{5E}
 \cH^{0, \ell} = (H^{0, \ell}_x)_{x \in Q} \text{ is a homogeneous bundle, and}
\end{equation}
\begin{equation}\label{6E}
 v^\ell (x) \bot H^{0, \ell}_x \quad \text{ for each } x \in Q.
\end{equation}
  Let $\xi^\ell_i (x) \in \mathbb{R}, \ y^{\ell, \nu}_i (x) \in Q, \ \beta^{\ell , \nu}_{ij} (x) \in \mathbb{R}, \ w_i (x) \in \mathbb{R}^r$ be as in section 
\label{subsection.3.2}. Thus, 
\begin{equation}\label{7E}
 \xi^0_i (x) = i^{th} \text{ component of } v(x), \text{ for } x \in Q;
\end{equation}
\begin{equation}\label{8E}
 \xi^\ell_i (x) = \lim_{\quad \nu \to \infty} \ \sum^r\limits_{j=1}\limits \ \beta^{\ell , \nu}_{ij} (x) \ \xi^{\ell - 1}_j \big( y^{\ell, \nu}_i (x) \big)
\end{equation}
 for $ x \in Q,  
1 \leq \ell \leq 2r + 1, \quad 1 \leq i \leq r$,
  and 
\begin{equation}\label{9E}
 v^{2r+1} (x) = \sum^r\limits_{i=1}\limits \ \xi^{2r + 1}_i (x) w_i (x) \quad \text{ for } x \in Q.
\end{equation}
  Recall that  
$\beta_{ij}^{\ell , \nu} (x), y^{\ell , \nu}_i (x) $    and $ w_i (x) $ are determined by the homogeneous 
bundle $ \cH^0, $ independently of the vectors $ (v(z))_{z \in Q}.$
The bundle $\cH^{2r+1} = (v^{2r+1}(x) + H^{0, 2r+1}_x)_{x \in Q}$ is Glaeser stable, with non-empty fibers. Hence, the results of section  \ref{subsection.3.4} 
apply to $\cH^{2r+1}$. 
Thus, we obtain a section of $\cH^{2r+1}$ of the form 
\begin{equation}\label{11E}
F(x) = \sum_{y \in S (x)}\limits \ A (x,y) v^{2r+1} (y) \qquad (\text{all } x \in Q),
\end{equation}
 where 
$S (x) \subset Q $ and $ \# (S (x) ) \leq d \quad $ for each $ x \in Q $; and 
$ A (x,y): \mathbb{R}^r \to \mathbb{R}^r $ is a linear map,   for each $ x \in Q, y \in S (x).$
  Our section $F$  satisfies the estimate 
\begin{equation}\label{13E}
\max_{\quad x \in Q} \big| F(x) \big| \leq C \max_{\quad x \in Q} \big| \widetilde{F} (x) \big| , \text{ for any section }  \widetilde{F} \text{ of } \cH^{2r + 1}.
\end{equation}
Here, $ d $ and $ C $ depend only on $ n $ and $ r; $ and 
the $ S(x) $  and $ A(x,y) $ are determined by $ \cH^{0, 2r + 1}$, 
independently of the vectors $ v^{2r+1}(z) \ (z \in Q)$.
 
Recall that the bundles $\cH$ and $\cH^{2r+1}$have the same sections. Therefore, substituting~(\ref{9E}) into~(\ref{11E}), and setting 
\begin{equation}\label{15E}
A_i(x,y) = A(x,y) w_i(y) \ \in \mathbb{R}^r \quad \text{ for } x \in Q, \ y \in S(x), \ i = 1, \cdots, r,
\end{equation}
  we find that 
\begin{equation}\label{16E}
F(x) = \sum_{y \in S(x)}\limits \ \sum^r\limits_1\limits \ \ \xi^{2r+1}_i (y) A_i(x,y) \text{ for all } x \in Q.
\end{equation}
  Moreover, 
$F $ is a section of $ \cH$, and
\begin{equation}\label{18E}
\max_{\quad x \in Q} \big| F(x) \big| \leq C \max_{\quad x \in Q} \big| \widetilde{F}(x) \big| \text{ for any section } \widetilde{F} \text{ of } \cH.
\end{equation}
  Furthermore  
 The $ A_i(x,y) $ are determined by $ \cH^0,$ independently   
of the family of vectors $(v(z))_{z \in Q}$.
 
 Thus, we can compute a section of $\cH$ by starting with~(\ref{7E}), then computing the $\xi^\ell_i (x)$ using the recursion~(\ref{8E}), and finally applying~(\ref{16E}) once we know the $\xi^{2r+1}_i (x)$. In particular, we guarantee that the limits in~(\ref{8E}) exist. Here, of course, we make essential use of our assumption that $\cH$ has a section.    

\subsection{Computing a continuous solution of linear equations}{\ }\label{subsection.3.6}

We apply the results of the preceding section, to find continuous solutions of  
\begin{equation}\label{1F}
\phi_1 f_1 + \cdots + \phi_r f_r = \phi \ \text{ on } Q.
\end{equation}
 
  Such a solution $(\phi_1,  \cdots , \phi_r)$ is a section of the bundle 
\begin{equation}\label{2F}
\cH = (H_x)_{x \in Q}, \text{ where}
\end{equation}
\begin{equation}\label{3F}
H_x = \{v = (v_1, \cdots , v_r) \in \mathbb{R}^r: \ v_1 f_1 (x) + \cdots + v_r f_r (x) = \phi (x) \}.
\end{equation}
  We write $\cH$ in the form
\begin{equation}\label{4F}
\cH = (v(x) + H^0_x)_{x \in Q} , \text{ where}
\end{equation}
\begin{equation}\label{5F}
H^0_x = \{v = (v_1, \cdots , v_r) \in \mathbb{R}^r: v_1 f_1 (x) + \cdots + v_r f_r (x) = 0 \},\text{ and}
\end{equation}
\begin{equation}\label{6F}
v(x) = \phi (x) \cdot (\widetilde{\xi}_1(x), \cdots , \widetilde{\xi}_r(x)); \text{ here,}
\end{equation}
\begin{equation}\label{7F}
\widetilde{\xi}_i(x) = 
\left\{
\begin{array}{l} 
\quad 0 \quad  \qtq{if} f_1(x) = f_2(x) = \cdots = f_r(x) = 0\\
{f_i(x)}/\bigl(f_1^2(x) + \cdots + f_r^2(x)\bigr)
\qtq{otherwise.}
\end{array}
\right.
\end{equation}
Note that 
\begin{equation}\label{9F}
v(x) \bot H^0_x \ \ \text{ for each } x \in Q.
\end{equation}
 
Specializing the discussion in the preceding section to the bundle 
  ~(\ref{4E}) $\cdots$~(\ref{8E}), we obtain the following objects:
\begin{itemize}
 \item coefficients $\beta^{\ell , \nu}_{ij} (x) \in \mathbb{R}, \ $ for $x \in Q, \ 1 \leq \ell \leq 2r+1, \ \nu \geq 1, 1 \leq i, j \leq r$;

 \item points $y^{\ell , \nu}_i (x) \in Q, \quad \text{ for } x \in Q, \ 1 \leq \ell \leq 2r+1, \ \nu \geq 1, \ 1 \leq i \leq r$;

\item finite sets $S(x) \subset Q, \quad$ for $x \in Q$; and

\item vectors $A_i (x,y) \in \mathbb{R}^{r}, \quad$ for $x \in Q, y \in S(x), \ 1 \leq i \leq r$.
\end{itemize}

These objects depend only on the functions $f_1, \cdots , f_r$. 

We write $A_{ij}(x,y)$ to denote the $i^{th}$ component of the vector $A_j (x,y)$. 

To attempt to solve equation~(\ref{1F}), we use the following 

\begin{procedure} \label{10F}
 First, compute $ \xi^\ell_i (x) \in \mathbb{R}, $ for all $ x \in Q, \ \ 0 \leq \ell \leq 2r+1, \ 1 \leq i \leq r,$ 
by the recursion:
\begin{eqnarray}
\xi^0_i (x) &=& \widetilde{\xi}_i (x) \cdot \phi (x) \quad \text{ for } \ 1 \leq i \leq r; \ \text{ and}\label{11F}
\\
\xi^\ell_i (x) &=& \lim_{\ \nu \to \infty}  \textstyle{ \sum^r_{j=1}} \ \beta^{\ell , \nu}_{ij} (x) \cdot \xi^{\ell - 1}_j (y^{\ell , \nu}_i (x))
\label{12F}
\end{eqnarray}
 for $ \ 1 \leq i \leq r,  
1 \leq \ell \leq 2r+1.$

Then define functions $\Phi_1, \cdots , \Phi_r: Q \to \mathbb{R} \ $, by setting
\begin{equation}\label{13F}
\Phi_i (x) = \sum_{y \in S (x)}\limits \ \sum^r\limits_{j=1}\limits \ A_{ij} (x, y) \cdot \xi^{2r+1}_j  (y)\qtq{for} \ \ x \in Q, \ \ 1 \leq i \leq r
\end{equation}

If, for some $x \in Q$ and $i = 1, \cdots , r$, the limit in~(\ref{12F}) fails to exist, then our procedure~(\ref{10F}) fails. Otherwise, procedure~(\ref{10F}) produces functions $\Phi_1, \cdots , \Phi_r: Q \ \to \mathbb{R}$. These functions may or may not be continuous. 
\end{procedure}

The next result follows at once from the discussion in the preceding section. It tells us that, if equation~(\ref{1E}) has a continuous solution, then procedure~(\ref{10F}) produces an essentially optimal continuous solution of~(\ref{1E}). 

\begin{thm}\label{p26.thm}
\begin{enumerate}
 \item The objects $\widetilde{\xi}_i (x), \beta^{\ell , \nu}_{ij} (x), y^{\ell , \nu}_i (x), S(x), \text{ and } A_{ij} (x,y)$, used in procedure~(\ref{10F}), depend only on $f_1, \cdots , f_r$, and not on the function $\phi$.

 \item For each $x \in Q$, the set $S (x) \subset Q$ contains at most $d$ points, where $d$ depends only on $n$ and $r$.

\item Let $\phi : Q \to \mathbb{R}$, and let $\phi_1, \cdots , \phi_r : Q \to \mathbb{R}$ be continuous functions 
such that $\quad \phi_1 f_1 + \cdots + \phi_r f_r = \phi$ on $Q. \ \ $Then procedure~(\ref{10F}) succeeds, the resulting functions $\Phi_1, \cdots , \Phi_r: Q \to \mathbb{R}$ are continuous, and  
$\ \Phi_1 f_1 + \cdots + \Phi_r f_r = \phi$ on $Q$. 
Moreover,
$$ 
\max_{\begin{subarray}{1}
        {x \in Q}\\
1 \leq i \leq r
       \end{subarray} }
 \qquad \big| \Phi_i (x) \big| \leq C \cdot  \notag
\max_{\begin{subarray}{1}
\ {x \in Q}\\
1 \leq i \leq r
	\end{subarray} }
 \big| \phi_i (x) \big| 
$$
where $C$ depends only on $n, r$. 
\end{enumerate}
\end{thm}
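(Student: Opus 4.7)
My plan is to assemble the theorem from the machinery already developed in Sections \ref{subsection.3.1}--\ref{subsection.3.5}, specialized to the bundle attached to the equation $\phi_1 f_1 + \cdots + \phi_r f_r = \phi$. The key observation to begin with is the direct sum decomposition (\ref{4F})--(\ref{7F}): the fiber $H_x$ from (\ref{3F}) is an affine subspace of $\bR^r$, and writing it as $v(x) + H^0_x$ with $v(x) \perp H^0_x$ yields precisely the two formulas in (\ref{6F})--(\ref{7F}), since $v(x)$ must be the shortest vector in $H_x$, i.e.\ the orthogonal projection of $0$ onto $H_x$.

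Parts (1) and (2) are then essentially bookkeeping. The homogeneous bundle $\cH^0$ in (\ref{5F}) is built from $f_1,\ldots,f_r$ alone, so by the explicit constructions in Sections \ref{subsection.3.1} and \ref{subsection.3.2} the basis vectors $\widetilde{\lambda}_i^\ell(x)$, the nearby points $y^{\ell,\nu}_i(x)$, the coefficients $\beta^{\ell,\nu}_{ij}(x)$, and the vectors $w_i(x)$ are all determined by $\cH^0$; the dependence on $\phi$ enters only through $v(x)$ in the initial datum (\ref{7E}). Similarly, the sets $S(x)$ and linear maps $A(x,y)$, hence the vectors $A_i(x,y)$ in (\ref{15E}), come from Theorem \ref{p18.thm} applied to $\cH^{2r+1}$, whose homogeneous part $\cH^{0,2r+1}$ is the $(2r{+}1)^{\text{st}}$ Glaeser refinement of $\cH^{0,0}$ and so depends only on the $f_i$. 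This proves (1). Part (2) is immediate from statement (\ref{9D}) of Theorem \ref{p18.thm}: every $S(x)$ has at most $d$ points, with $d$ depending only on $n$ and $r$.

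For part (3), suppose continuous $\phi_1,\ldots,\phi_r$ with $\sum_i \phi_i f_i = \phi$ exist. Then $(\phi_1,\ldots,\phi_r)$ is a section of the bundle $\cH$ in (\ref{2F})--(\ref{3F}), so by the Stabilization Lemma \ref{35.7} and identity (\ref{thfi.6}), all iterated Glaeser refinements $\cH^\ell$ have non-empty fibers and $\cH^{2r+1}$ is Glaeser stable. Consequently the recursion (\ref{8E}) converges for every $x$ and every $\ell \leq 2r+1$, which is exactly the statement that the limit in (\ref{12F}) exists; i.e.\ Procedure \ref{10F} does not fail. Theorem \ref{p18.thm} applied to $\cH^{2r+1}$ then furnishes a continuous section $F$ of $\cH^{2r+1}$ given by the formula (\ref{11E}); substituting (\ref{9E}) yields precisely (\ref{16E}), which in coordinates is (\ref{13F}). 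Since $\cH$ and $\cH^{2r+1}$ have the same sections, $F = (\Phi_1,\ldots,\Phi_r)$ is a section of $\cH$, meaning $\sum_i \Phi_i f_i = \phi$ on $Q$ with each $\Phi_i$ continuous.

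The remaining point is the estimate. From Theorem \ref{p18.thm}\,(\ref{8D}) the section $F$ of $\cH^{2r+1}$ satisfies $\max_x |F(x)| \leq C \max_x |v^{2r+1}(x)|$; and as observed at the end of Section \ref{subsection.3.4} any other section $\widetilde F$ of $\cH^{2r+1}$ obeys $|v^{2r+1}(x)| \leq |\widetilde F(x)|$ because $v^{2r+1}(x) \perp H^{0,2r+1}_x$ and $\widetilde F(x) \in v^{2r+1}(x) + H^{0,2r+1}_x$. Taking $\widetilde F = (\phi_1,\ldots,\phi_r)$, which is a section of $\cH$ and hence of $\cH^{2r+1}$, yields
\[
\max_{x,i} |\Phi_i(x)| \leq C \max_{x,i} |\phi_i(x)|,
\]
completing the proof. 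The only real obstacle I anticipate is a notational one, namely verifying that substituting (\ref{9E}) into (\ref{11E}) really produces the component-wise formula (\ref{13F}) with $A_{ij}$ as defined; this is a straightforward unwinding once one tracks the identification between $v^{2r+1}(y)$ and the coordinates $\xi^{2r+1}_j(y)$ through the basis $\widetilde{\lambda}^{2r+1}_j(y)$ and the dual vectors $w_i(y)$.
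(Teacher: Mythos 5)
Your proposal is correct and follows essentially the same route as the paper: the paper itself states that Theorem \ref{p26.thm} ``follows at once from the discussion in the preceding section,'' and your argument is precisely that discussion assembled in order --- the orthogonal decomposition (\ref{4F})--(\ref{7F}), the independence of the auxiliary objects from $\phi$, the existence of the limits in the recursion once a section exists, Theorem \ref{p18.thm} applied to the Glaeser-stable bundle $\cH^{2r+1}$, and the norm comparison $|v^{2r+1}(x)|\leq|\widetilde F(x)|$ for the final estimate. Nothing is missing.
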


For particular functions $f_1 , \cdots , f_r$, it is a tedious, routine exercise to go through the arguments in the past several sections, and compute the $\widetilde{\xi}_i (x), \beta^{\ell , \nu}_i (x), y^{\ell , \nu}_i (x), S(x)$ and $A_{ij} (x,y)$ used in our Procedure~(\ref{10F}). We invite the reader to carry this out for the case of Hochster's equation  \ref{rpwt}.4, and to compare the resulting formulas with those given in Section \ref{sec.3}.

\section{Algebraic geometry approach}

The following simple example illustrates this method.

\begin{exmp}\label{simple.exmp}
Which functions $\phi$ on $\r^2_{xy}$ can be written in the form
\begin{equation}\label{simple.exmp.1}
\phi=\phi_1x^2+\phi_2y^2
\end{equation}
where $\phi_1, \phi_2$ are continuous on $\r^2$?
(We know that the Pointwise Tests (\ref{rpwt}) give an answer in this case,
but the following method will generalize better.)

An obvious necessary condition is that $\phi$ should vanish to
order 2 at the origin. This is, however, not sufficient
since $xy$ can not be written in this form.

To see what happens, we blow up the origin.
The resulting real algebraic variety $p:B_0\r^2\to \r^2$ can be covered by
two charts; one given by coordinates
$x_1=x/y, y_1=y$ the other by coordinates
$x_2=x, y_2=y/x$. Working in the first chart,
pulling back (\ref{simple.exmp.1}) we get the equation
\begin{equation}\label{simple.exmp.2}
\phi\circ p=(\phi_1\circ p)\cdot x_1^2y_1^2+(\phi_2\circ p)\cdot y_1^2.
\end{equation}
The right hand side is divisible by $y_1^2$, so we have our first
condition
\medskip

{\it (\ref{simple.exmp}.1) First test.} Is $( \phi\circ p)/y_1^2$ continuous?
\medskip

If the answer is yes, then we divide by $y_1^2$,
set $\psi:=( \phi\circ p)/y_1^2$ 
and try to solve
\begin{equation}\label{simple.exmp.4}
\psi=\psi_1\cdot x_1^2+\psi_2.
\end{equation}
This  always has a  continuous solution, but we need
a solution where $\psi_i= \phi_i\circ p$ for some
$\phi_i$. Clearly, the $\psi_i$ have to be constant
along the line $(y_1=0)$. This is easily seen to be the only
restriction. We thus set $y_1=0$ and try to solve
\begin{equation}\label{simple.exmp.5}
\psi(x_1, 0)=r_1 x_1^2+r_2\qtq{where $r_i\in \r$.}
\end{equation}
The original 2 variable problem has been reduced to a
1 variable question.
Solvability is easy to decide using either of the following.
\medskip

{\it (\ref{simple.exmp}.2.i) Second test, Wronskian form.}
The following determinant is identically zero
$$
\left|
\begin{array}{ccc}
1 & 1 &  1\\
a^2 & b^2 & c^2 \\
\psi(a, 0) &\psi(b, 0) &\psi(c, 0) 
\end{array}
\right|
$$
\medskip

{\it (\ref{simple.exmp}.2.ii) Second test, finite set form.}
For every $a,b,c\in \r$ there are  $r_i:=r_i(a,b,c)\in \r$
(possibly depending on $a,b,c$)
such that
$$
\psi(a, 0)=r_1a^2+r_2,\quad 
\psi(b, 0)=r_1b^2+r_2\qtq{and}
\psi(c, 0)=r_1c^2+r_2.
$$
(In principle we should check what happens on the second chart,
but in this case it gives nothing new.)
\end{exmp}

Working on $\r^n$, let us now consider the general case
$$
\phi=\tsum_i \phi_i f_i.
$$
As in (\ref{simple.exmp}), we start by blowing up
either the common zero set $Z=(f_1=\cdots=f_r=0)$, or,
what is computationally easier, the ideal
$(f_1,\dots,f_r)$. We get a real algebraic variety
$p:Y\to \r^n$.

Working in various coordinate charts on $Y$,
we get  analogs of the First test (\ref{simple.exmp}.1)
and new equations
$$
\psi=\tsum_i \psi_i g_i.
$$
The solvability again needs to be checked only on an
$(n-1)$-dimensional real algebraic subvariety $Y_E\subset Y$.
One sees, however, that the second tests (\ref{simple.exmp}.2.i--ii)
are both equivalent to the Pointwise tests (\ref{rpwt}),
thus not sufficient in general.

Instead, we focus on what kind of question we
need to solve on $Y_E$. This leads to the 
following  concept.

\begin{defn}\label{desc.prob.efn} A {\it descent problem}  is a compound
object 
$$
{\mathbf D}=\bigl(p:Y\to X, f:p^*E\to F\bigr)
$$ 
 consisting of a 
 proper morphism  of real algebraic varieties $p:Y\to X$,
 an algebraic   vector bundle $E$ on $X$,  an algebraic 
   vector bundle $F$ on $Y$
and an algebraic  vector bundle map
$f:p^*E\to F$.
(See (\ref{r.basic}) for  the basic notions related to
real algebraic varieties.)

Our aim is to understand the image of
$f\circ p^*:C^0(X,E)\to C^0(Y,F)$.
\end{defn}

We have the following analog of (\ref{simple.exmp}.2.ii).

\begin{defn}\label{fin.set.test.defn} 
Let ${\mathbf D}=\bigl(p:Y\to X, f:p^*E\to F\bigr)$
be a descent problem and $\phi_Y\in C^0(Y,F)$. We say that
$\phi_Y$ satisfies the {\it finite set test} if for 
every $y_1,\dots, y_m\in Y$ there is a
$\phi_X=\phi_{X,y_1,\dots, y_m}\in C^0(X,E)$
(possibly depending on $y_1,\dots, y_m$)
such that
$$
\phi_Y(y_i)=f\circ p^*(\phi_X)(y_i)\qtq{for  $i=1,\dots,m$.}
$$
\end{defn}

\begin{defn}\label{fin.det.defn} 
 A  descent problem  ${\mathbf D}=\bigl(p:Y\to X, f:p^*E\to F\bigr)$
is called {\it finitely determined} if
 for every  $\phi_Y\in  C^0(Y, F)$ the following 
are equivalent.
\begin{enumerate} 
\item  $\phi_Y\in \im \bigl[f\circ p^*:C^0(X,E)\to C^0(Y, F)\bigr]$.
\item  $\phi_Y$ satisfies the  finite set test.
\end{enumerate}
\end{defn}

\begin{say}[Outline of the main result]\label{outline}
Our theorem (\ref{main.thm.v2}) 
gives an algorithm to decide the answer to  Question \ref{rq1}.
The precise formulation 
is somewhat technical to state,
so here is a rough explanation of what kind of answer it gives
and what we mean by an ``algorithm.''
There are three main parts.

{\it Part 1.} First, starting with $\r^n$ and $f_1,\dots, f_r$
we construct a finitely determined
descent problem  ${\mathbf D}=\bigl(p:Y\to \r^n, f:p^*E\to F\bigr)$.
This is purely
algebraic, can be effectively carried out and
independent of $\phi$.

{\it Part 2.} There is a partially defined
``twisted pull-back'' map $p^{(*)}: C^0(\r^n)\map C^0(Y,F)$ 
(\ref{rel.desc.prob.sefn})
which  is obtained as an iteration of three kinds of steps
\begin{enumerate}
\item We compose a function  by a
real algebraic map.
\item We create a vector function out of several functions or
decompose a vector function into its coordinate functions.
\item We  choose  local (real analytic) coordinates $\{y_i\}$
and ask if a certain  function of the form
$\psi_{j+1}:=\psi_j\cdot\textstyle{\prod_i} y_i^{-m_{i}}$  
is continuous or not where $m_{i}\in \z$.
 \end{enumerate}
If any of the answers is no,
then the original $\phi$ can not be written as $\sum_i \phi_if_i$
and we are done.
If all the answers are  yes, then
we end up with $p^{(*)}\phi\in C^0(Y,F)$.

{\it Part 3.} We show that $\phi=\sum_i \phi_if_i$
is solvable iff $p^{(*)}\phi\in C^0(Y,F)$
satisfies the finite set test (\ref{fin.set.test.defn}).

By following the proof, one can actually write down solutions
$\phi_i$, but this relies on some artificial choices.
The main ingredient that we need is to choose extensions of
certain  functions defined on closed
semialgebraic subsets to the whole $\r^n$.
In general, there does not seem to be any natural extension,
and we do not know if it makes sense to ask for the ``best possible''
solution or not.
\end{say}

{\it Negative aspects.} There are two difficulties in carrying out
this procedure in any given case.
First, in practice, (3) of Part 2 may not be
effectively doable.
Second,  we may need to compose $\psi_{j+1}$ with a 
real algebraic map $r_{j+1}$
such that $\psi_{j}$ vanishes on the image of $r_{j+1}$.
 Thus we really need to compute limits
and work with the resulting functions.
This also makes it  difficult to interpret our answer
on $\r^n$ directly.

{\it Positive aspects.} 
On the other hand, just knowing that the answer has the above 
general structure already has some useful consequences.


First, the general framework works for other classes of functions;
for instance the same algebraic set-up also applies
in case $\phi$ and the $\phi_i$ are H\"older continuous.

Another
consequence we obtain is that if $\phi=\sum_i \phi_if_i$ is solvable and
$\phi$ has certain additional properties,
then one can also find a solution $\phi=\sum_i \psi_if_i$
where the $\psi_i$ also have these   additional  properties.
We list two  such examples below; see also (\ref{foon.on}).
For the proof, see  (\ref{pf.of.main.thm.induct}) and (\ref{pf.of.r.main.cors}).


\begin{cor} \label{r.main.cors}
Fix $f_1,\dots, f_r$ and assume that
$\phi=\sum_i \phi_if_i$ is solvable. Then:
\begin{enumerate}
\item If $\phi$ is semialgebraic
(\ref{r.basic}) then  there is a solution 
$\phi=\sum_i \psi_if_i$ such that the $\psi_i$ are also  semialgebraic.
\item Let $U\subset \r^n\setminus Z$ be an open set such that
  $\phi$ is  $C^m$ on $U$ for some $m\in \{1,2, \dots, \infty, \omega\}$.
Then there is a solution $\phi=\sum_i \psi_if_i$ such that
the $\psi_i$ are  also $C^m$ on $U$.
\end{enumerate}
\end{cor}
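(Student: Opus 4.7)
The plan is to handle the two parts separately: part (2) by directly modifying any given continuous solution in the style of \ref{foon.on}, and part (1) by tracking semialgebraicity through the algorithm sketched in \ref{outline}.

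For (2), I would start with any continuous solution $(\phi_1,\dots,\phi_r)$, which exists by hypothesis. Since $U\subset\r^n\setminus Z$, the function $\sigma:=\sum_i f_i^2$ is strictly positive on $U$. A standard $C^m$-approximation argument (mollification with a locally finite partition of unity on $U$ for $m<\infty$; the Approximation Theorem in \ref{foon.on} for $m=\omega$) produces $C^m$ functions $\tilde\phi_i$ on $U$ with $|\tilde\phi_i-\phi_i|\le\varepsilon$ on $U$, for any preassigned continuous $\varepsilon>0$ on $U$. I pick $\varepsilon$ so that $\varepsilon\to 0$ as $x$ approaches $\partial U$ or infinity within $U$. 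Set $h:=\tsum_i\tilde\phi_i f_i-\phi=\tsum_i(\tilde\phi_i-\phi_i)f_i$ on $U$, so $|h|\le\varepsilon\cdot\sum_i|f_i|$ and hence $hf_i/\sigma\to 0$ at $\partial U$. Define
$$
\psi_i:=\tilde\phi_i-\frac{hf_i}{\sigma}\ \text{on }U,\qquad \psi_i:=\phi_i\ \text{on }\r^n\setminus U.
$$
On $U$, $\psi_i$ is $C^m$ because $\sigma>0$ and $\tilde\phi_i,f_i,\phi$ are $C^m$ there; across $\partial U$, both expressions tend to $\phi_i$, so $\psi_i$ is continuous on $\r^n$. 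A one-line computation gives $\tsum_i\psi_i f_i=\phi$ on $U$ (the correction cancels $h$) and $\tsum_i\psi_i f_i=\phi$ on $\r^n\setminus U$ tautologically.

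For (1), the key observation is that every step in the algorithm of \ref{outline} preserves the class of continuous semialgebraic functions. The descent problem $\mathbf{D}=(p:Y\to\r^n,\ f:p^*E\to F)$ is constructed purely algebraically from $f_1,\dots,f_r$, so all its data are semialgebraic. The twisted pull-back $p^{(*)}\phi$ is obtained by finitely many iterations of composition with real algebraic maps, rearrangement of vector components, and division by a monomial $\prod_i y_i^{m_i}$ when the quotient is continuous---each such operation sends continuous semialgebraic functions to continuous semialgebraic functions. Hence, starting from semialgebraic $\phi$, the element $p^{(*)}\phi\in C^0(Y,F)$ is semialgebraic. By Part 3 of \ref{outline}, the finite-set test (\ref{fin.set.test.defn}) then admits a solution $\phi_X\in C^0(\r^n,E)$; the two ``artificial choices'' required to build it are (a) extensions of continuous functions from closed semialgebraic subsets of $\r^n$, and (b) the partition-of-unity gluing that appears in the Michael-type construction of sections in section 2. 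Both admit semialgebraic versions---the semialgebraic Tietze extension theorem, and the existence of continuous semialgebraic partitions of unity subordinate to finite open semialgebraic coverings---so the resulting $\phi_X$, and hence the corresponding $\psi_i$, can be chosen semialgebraic.

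The main obstacle in (2) is a careful choice of the approximation error $\varepsilon$: it must vanish fast enough at $\partial U$ so that the correction $hf_i/\sigma$ is continuous across $\partial U$, even though $\sigma$ itself may tend to $0$ at $\partial U$. This is the $C^m$ analogue of the estimate in \ref{foon.on}. The main obstacle in (1) is ensuring that the internal ``artificial choices'' in the algorithm of \ref{outline} can be systematically replaced by semialgebraic ones without disturbing any preceding step; once the semialgebraic Tietze and semialgebraic partition-of-unity statements are in hand, this is routine induction along the iterative structure of $p^{(*)}$.
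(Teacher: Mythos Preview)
Your part (2) is correct and in fact more direct than the route the paper indicates. The paper's stated proof of (2) is embedded in the inductive argument (\ref{pf.of.main.thm.induct}): at the step where a section over $Z$ is extended to all of $X$, one invokes Whitney's theorem to make the extension real analytic (hence $C^m$) off $Z$, and then observes that the remaining algebraic manipulations in (\ref{red.step.main}) preserve this regularity. Your approach instead takes any continuous solution and corrects it on $U$ via the approximation-plus-projection formula, exactly as in (\ref{foon.on}) but with the Approximation Theorem replaced by the appropriate $C^m$ density statement. This bypasses the Section 4 machinery entirely; it works because $|f_if_j|/\sigma\le\tfrac12$, so the correction $hf_i/\sigma$ is controlled by $\varepsilon$ alone and the worry you flag about $\sigma\to 0$ at $\partial U$ is harmless.

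For part (1) your overall plan—propagate semialgebraicity through the Section 4 algorithm—matches the paper's proof in (\ref{pf.of.r.main.cors}). But your identification of the ``artificial choices'' is off. You list as (b) the Michael-type partition-of-unity gluing from Section 2; the Section 4 construction never invokes that, and it matters that it does not: the Michael argument in (\ref{pes}) produces the section as a \emph{uniform limit} $f=\lim_i f_i$ of successive corrections, and a uniform limit of semialgebraic functions need not be semialgebraic, so a ``semialgebraic partition of unity'' does not rescue that route. The paper's actual mechanism is the axiomatic framework (\ref{function.classes.ass}): one checks that $S^0$ satisfies properties (1)--(5) and that the pair $S^0\subset C^0$ satisfies the division property (6). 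Then the proof of Theorem \ref{main.thm.v2}, read verbatim for the class $S^0$, needs only the semialgebraic extension property (your item (a)) at the lifting step in (\ref{pf.of.main.thm.induct}); everything else in (\ref{red.step.main}) is pull-backs, fiber products, splittings of algebraic bundle maps, and $S_m$-invariance, all of which stay inside $S^0$. So drop the reference to Michael and replace it by the verification that $S^0$ satisfies (\ref{function.classes.ass}.1--6); that is the actual content, and once you have it the argument is the same as the paper's.
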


\begin{exmps}\label{counter.exmps} The next series of examples shows 
several possible variants of
 (\ref{r.main.cors}) that fail.



(1) Here $\phi$ is a polynomial, but the $\phi_i$ must have
very small H\"older exponents.

For $m\geq 1$, take $\phi:=x^{2m}+(x^{2m-1}-y^{2m+1})^2$
and $f_1=x^{2m+2}+y^{2m+2}$. There is only one solution,
$$
\phi_1=\frac{x^{2m}+(x^{2m-1}-y^{2m+1})^2}{x^{2m+2}+y^{2m+2}}.
$$
We claim that it is H\"older with exponent $\frac2{2m-1}$.
The exponent is achieved along the curve
$x^{2m-1}-y^{2m+1}=0$, parametrized as
$\bigl(t^{(2m+1)/(2m-1)}, t\bigr)$.

 
(2) Here $\phi$ is $C^n$, there is a $C^0$ solution but no 
H\"older solution.

On 
$[-\frac12, \frac12]\subset \r^1$ set $f=x^n$ and $\phi=x^n/\log|x|$.
Then $\phi$ is $C^n$  and $\phi=\frac1{\log|x|}\cdot f$.
Note that $\frac1{\log|x|}$ is continuous but not H\"older.
(These can be extended to $\r^1$ in many ways.)

(3) Question: If  $\phi$ is $C^{\infty}$ and there is a $C^0$ solution,
 is there always a  H\"older solution?

(4) Let $g(x)$ be a real analytic function. Set $f_1:=y$ and
$\phi:=\sin\bigl(g(x)y\bigr)$. Then 
$\phi_1:=\phi/y$ is also real analytic and
$\phi=\phi_1\cdot f_1$ is the only solution.
Note that $|\phi(x,y)|\leq 1$ everywhere yet
$\phi_1(x,0)=g(x)$ can grow arbitrary fast.

(5) In general
there is no solution $\phi=\sum_i \psi_if_i$ such that
$\supp\psi_i\subset \supp \phi$ for every $i$.
As an example, take $f_1=x^2+x^4, f_2=x^2+y^2$ and 
$$
\phi(x,y)=
\left\{
\begin{array}{cl}
x^4-y^2&\qtq{if $y^2\geq x^4$ and}\\
0&\qtq{if $y^2\leq x^4$.}
\end{array}
\right.
$$
Note that $\phi=f_1-\phi_2f_2$ where
$$
\phi_2(x,y)=
\left\{
\begin{array}{cl}
1&\qtq{if $y^2\geq x^4$ and}\\
\frac{x^2+x^4}{x^2+y^2}&\qtq{if $y^2\leq x^4$.}
\end{array}
\right.
$$
Let $\phi=\phi_1\cdot(x^2+x^4)+\psi_2\cdot(x^2+y^2)$ be any
continuous solution. Setting $x=0$ we get that
$-y^2=\psi_2(0,y)\cdot y^2$, hence $\psi_2(0,0)=-1$.
Thus $\supp \psi_2$ can not be contained in 
$\supp\phi$.

On the other hand, given any solution
$\phi=\sum_i \phi_if_i$, let $\chi$ be a function that is 1 on $\supp\phi$ and
0 outside a small neighborhood of it. Then
 $\phi=\chi\phi=\sum (\chi\phi_i)f_i$. 
Thus we do have solutions whose support 
is close to $\supp \phi$.

\end{exmps}

\subsection{Descent problems and their scions}{\ }

\begin{say}[Basic set-up]\label{r.basic}
From now on, $X$ denotes  a fixed
 real algebraic variety.
We always think of $X$ as the real points of  a complex affine algebraic
 variety  $X_{\c}$ that is  defined 
by real equations. 
(All our algebraic varieties are assumed reduced,
that is, a function is zero iff it is zero at every point).

By a {\it projective variety over $X$}
we mean the real points of a closed 
subvariety  
$Y\subset X\times \c\p^N$.
Every such $Y$ is again
 the set of real points of  a complex affine algebraic
 variety   $Y_{\c}\subset X_{\c}\times \c\p^N$
 that is  defined 
by real equations.
For instance,  $X\times \r\p^N$ is contained 
in the affine variety which is the complement
of the hypersurface $(\sum y_i^2=0)$ where $y_i$ are the coordinates on $\p^N$.
 
A  variety $Y$ over $X$
comes equipped with a  morphism $p:Y\to X$ to $X$,
given by the first projection of $X\times \c\p^N$.
Given such $p_i:Y_i\to X$, a morphism between them is a
morphism of real 
algebraic varieties $\phi:Y_1\to Y_2$ such that
$p_1=p_2\circ \phi$.

Given $p_i:Y_i\to X$, their {\it fiber product} is
$$
Y_1\times_XY_2:=\bigl\{(y_1, y_2): p_1(y_1)=p_2(y_2)\bigr\}
\subset Y_1\times Y_2.
$$
This comes with a natural projection
$p:Y_1\times_XY_2\to X$
and $p^{-1}(x)=p_1^{-1}(x)\times p_2^{-1}(x)$ for every $x\in X$.
(Note, however, that even if the $Y_i$ are smooth,
their fiber product can be very singular.)
If $X$ is irreducible, we are frequently
interested only in those irreducible components that dominate $X$;
called the {\it dominant components}.

$\reg(Y)$ denotes the ring of all regular functions on  $Y$.
These are locally quotients of polynomials
$p(x)/q(x)$ where $q(x)$ is nowhere zero.

By an  algebraic {\it vector bundle} on $Y$
we mean the restriction of a complex algebraic
 vector bundle from $Y_{\c}$ to $Y$.
All such  vector bundles can be given by patching
trivial bundles on a 
Zariski open cover $X=\cup_i U_i$  using  transition functions in
$\reg(U_i\cap U_j)$. 
(Note that the latter condition is not quite equivalent to our definition, 
but this is not important for us, cf.\ \cite[Chap.12]{bcr}.)

 Note that there are two natural topologies on  a real algebraic variety $Y$,
the Euclidean topology and the Zariski topology.
The closed sets of the latter are exactly the
closed subvarieties of  $Y$.
A Zariski closed (resp.\ open) subset of $Y$ is also
 Euclidean  closed (resp.\ open).

A closed {\it basic semialgebraic subset} of $Y$ is defined by
finitely many inequalities $g_i\geq 0$. Using finite intersections and
complements we get all semialgebraic subsets.
A function is semialgebraic iff its graph is semialgebraic.
See  \cite[Chap.2]{bcr} for  a detailed treatment.
\end{say}

We need various ways of modifying descent problems.
The following  definition is chosen to 
consist of simple and computable steps yet
be broad enough
for the proofs to work.
(It should become clear that several variants of the definition would 
also work. We found the present one convenient to use.)

\begin{defn}[Scions of  descent problems]\label{rel.desc.prob.sefn}
Let ${\mathbf D}=\bigl(p:Y\to X, f:p^*E\to F\bigr)$  be a descent problem.
A {\it scion} of  ${\mathbf D}$
is any  descent problem 
${\mathbf D}_s=\bigl(p_s:Y_s\to X, f_s:p_s^*E\to F_s\bigr)$
that can be obtained by  repeated application of
the following procedures.
\begin{enumerate}
\item   For a proper morphism $r:Y_1\to Y$ set
$$
r^*{\mathbf D}:=
\bigl(p\circ r:Y_1\to X, r^*f: (p\circ r)^*E\to r^*F\bigr).
$$
As a special case, if $Z\subset X$ is a closed subvariety then the scion 
${\mathbf D}_Z=\bigl(p_Z:Y_Z\to Z, f_Z:p_Z^*(E|_Z)\to F|_{Y_Z}\bigr)$ 
(where $Y_Z:=p^{-1}(Z)$)
is called  the {\it restriction} of  ${\mathbf D}$ to   $Z$.
\item  Given $Y_w$, assume that there are several  proper morphisms 
$r_i:Y_w\to Y$  such that the composites
$p_w:=p\circ r_i$ are all the same. Set
$$
(r_1,\dots, r_m)^*{\mathbf D}:=
\bigl(p_w:Y_w\to X, \tsum_{i=1}^mr_i^*f: p_w^*E\to \tsum_{i=1}^mr_i^*F\bigr)
$$
where $\tsum_{i=1}^mr_i^*f$ is the natural diagonal map.
\item Assume that $f$ factors as 
$p^*E\stackrel{q}{\to} F'\stackrel{j}{\into} F$ 
where 
$F'$ is a vector bundle and
$\rank_y j=\rank_y F'$ for all $y$ in  a 
Euclidean dense Zariski open subset  $Y^0\subset Y$.
Then  set
$$
{\mathbf D}':=\bigl(p:Y\to X, f':=q:p^*E\to F'\bigr).
$$
\end{enumerate}
(The choice of $Y^0$  is actually a quite subtle point. 
Algebraic maps have constant rank over a suitable 
Zariski open subset and we want this open set to
determine what happens with an arbitrary continuous function.
This is why $Y^0$ is assumed Euclidean dense, not just
Zariski dense. If $Y$ is smooth, these are equivalent properties,
but not if $Y$ is singular.
As an example, consider
the Whitney umbrella  $Y:=(x^2=y^2z)\subset \r^3$.
Here $Y\setminus (x=y=0)$ is Zariski open  and Zariski dense.
Its Euclidean  closure does not contain the ``handle''
$(x=y=0,\ z<0)$, so it is
 not Euclidean dense.)

Each scion remembers all of its forebears.
That is, two scions are considered the ``same'' only if they
have been constructed by an identical sequence of procedures.
This is  quite important since the vector bundle  $F_s$
 on a scion ${\mathbf D}_s$ does depend on the
whole sequence.

Every scion comes with a {\it structure map}
$r_s:Y_s\to Y$.

If $\phi\in C^0(Y,F)$ then $r^*\phi\in C^0(Y_1,r^*F)$
and $\tsum_{i=1}^mr_i^*\phi\in C^0(Y_w,\tsum_{i=1}^mr_i^*F)$
are well defined. 
In (3) above,  $j:C^0(Y,F')\to C^0(Y,F)$ is an injection,
hence there is at most one $\phi'\in C^0(Y,F')$
such that $j(\phi')=\phi$.
Iterating these, for any scion ${\mathbf D}_s$ of ${\mathbf D}$
with  structure map
$r_s:Y_s\to Y$
we get a partially defined map, called the {\it twisted pull-back},
$$
r_s^{(*)}: C^0(Y,F)\map C^0(Y_s, F_s).
$$
We will need to know which functions
$\phi$ are in the domain of a twisted pull-back map.
A complete answer is given in (\ref{loc.lift.test}).

The twisted pull-back map sits in a commutative square
$$
\begin{array}{ccc}
C^0(Y,F) & \stackrel{r_s^{(*)}}{\map} & C^0(Y_s, F_s)\\
\uparrow && \uparrow \\
C^0(X,E) & = & C^0(X,E).
\end{array}
$$
If the structure map $r_s:Y_s\to Y$ is surjective, then 
$r^{(*)}: C^0(Y,F)\map C^0(Y_s, F_s)$ is injective (on its domain).
In this case,  understanding the image of
$f\circ p^*:C^0(X,E)\to C^0(Y,F)$ is pretty much
equivalent to understanding the image of
$f_s\circ p_s^*:C^0(X,E)\to C^0(Y_s,F_s)$.


\end{defn}

We are now ready to state our main result, first in the inductive form.

\begin{prop}\label{main.thm.induct} Let
${\mathbf D}=\bigl(p:Y\to X, f:p^*E\to F\bigr)$
be a descent problem. Then
 there is a   scion
 ${\mathbf D}_s=\bigl(p_s:Y_s\to X, f_s:p_s^*E\to F_s\bigr)$
with  surjective  structure map $r_s:Y_s\to Y$
and a closed subvariety $Z\subset X$ 
such that $\dim Z< \dim X$
and for every  $\phi\in  C^0(Y, F)$ following 
are equivalent.
\begin{enumerate} 
\item  $\phi\in \im \bigl[f\circ p^*:C^0(X,E)\to C^0(Y, F)\bigr]$.
\item  $r_s^{(*)}\phi$ is defined and
 $r_s^{(*)}\phi\in \im \bigl[f_s\circ p_s^*:C^0(X,E)\to C^0(Y_s, F_s)\bigr]$,
\item 
\begin{enumerate} 
\item $r_s^{(*)}\phi$  satisfies the finite set test
 (\ref{fin.set.test.defn}) and
\item  $\phi|_{Y_Z}\in
 \im \bigl[f_Z\circ p_Z^*:C^0(Z,E|_Z)\to C^0(Y_Z, F_Z)\bigr]$,
where  the scion \linebreak
${\mathbf D}_Z=\bigl(p_Z:Y_Z\to Z, f_Z:p_Z^*(E|_Z)\to F_Z\bigr)$
is  the restriction of  ${\mathbf D}_s$ to   $Z$
(\ref{rel.desc.prob.sefn}.1).
\end{enumerate} 
\end{enumerate} 
\end{prop}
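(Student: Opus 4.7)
The plan is to dispatch (1)$\Rightarrow$(2)$\Rightarrow$(3) formally, then construct ${\mathbf D}_s$ and $Z$ by a sequence of scion operations that normalize $f$, and finally prove the hard direction (3)$\Rightarrow$(1) by combining the Glaeser--Michael theory of Section~2 with a patching argument along $Z$. The easy implications are immediate: if $\phi = f\circ p^*(\phi_X)$, then $r_s^{(*)}\phi = f_s\circ p_s^*(\phi_X)$ once we arrange during the construction of ${\mathbf D}_s$ that the twisted pull-back is defined on the image of $f\circ p^*$; the finite set test (3a) is obviously necessary; and restriction to $Z$ supplies (3b).

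For the construction of ${\mathbf D}_s$ and $Z$, I would first apply procedure~(\ref{rel.desc.prob.sefn}.1) to resolve singularities of $Y$, then iteratively blow up along the loci where $\rank f$ drops until the image subsheaf $\im(f) \subset F$ becomes a subbundle $F'$ on a Euclidean-dense Zariski open $Y^0$. Apply procedure~(\ref{rel.desc.prob.sefn}.3) to replace $F$ with $F'$ and $f$ with the induced surjection. To encode the constraint that a candidate lift on $p^*E$ actually descend to a section of $E$ on $X$, apply procedure~(\ref{rel.desc.prob.sefn}.2) to the two projections from (a resolution of) the fiber product $Y \times_X Y$. Take $Z \subset X$ to be the algebraic closure of the image of the union of blow-up centers, non-generic fiber loci, and $Y \setminus Y^0$; each contributes a proper subvariety, so $\dim Z < \dim X$.

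For the hard direction (3)$\Rightarrow$(1), over $X \setminus Z$ the scion operations have rendered $f_s$ a surjective bundle map of constant rank, so the equation $f_s \circ p_s^*(\phi_X) = r_s^{(*)}\phi$ defines an affine-subspace bundle $\cH$ on $X \setminus Z$ in the sense of (\ref{thfi}). The key claim is that the finite set test (3a) on $Y_s$ is equivalent to the Glaeser-stable refinement $\cH^{2r+1}$ having non-empty fibers. The structure map $r_s \colon Y_s \to Y$ must be engineered so as to record the iterated limits along algebraic arcs that define successive Glaeser refinements; by the Stabilization Lemma~\ref{35.7}, $2r+1$ such steps capture every obstruction. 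Granting the claim, Lemma~\ref{35.8} (Michael's theorem) produces a continuous lift $\phi_X^\circ$ on compact subsets of $X \setminus Z$, while (3b) provides a continuous lift $\phi_X^Z$ on $Z$. I would patch them by choosing the Michael selection $\phi_X^\circ$ so that its fiber values approach $\phi_X^Z$ along $Z$, then gluing via a partition of unity in a neighborhood of $Z$.

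The main obstacle is aligning the algebraic construction of ${\mathbf D}_s$ with the analytic iteration producing $\cH^{2r+1}$: I expect procedure~(\ref{rel.desc.prob.sefn}.2) to be applied iteratively (roughly $2r+1$ times), each layer of the scion encoding the compatibility condition added at the corresponding Glaeser step, all while keeping $r_s$ surjective and confining the new bad loci to $Z$. A secondary difficulty is the boundary-matching in the patching: selecting the Michael section on $X \setminus Z$ to realize prescribed limit values along $Z$ is a constrained selection problem that may require a further semialgebraic refinement of the scion before Michael's theorem can be applied with the required boundary behavior.
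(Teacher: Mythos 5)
Your easy implications are fine, but the hard direction (3)$\Rightarrow$(1) rests on a ``key claim'' --- that the finite set test on the engineered scion is equivalent to non-emptiness of the fibers of the Glaeser-stable refinement $\cH^{2r+1}$ --- which you do not prove, and which is in fact the entire content of the theorem. The finite set test is a condition on finite subsets, whereas the higher Glaeser refinements are defined by iterated limits; Hochster's example (\ref{rpwt}.4) shows exactly that the gap between the two is where the difficulty lives. Saying that $r_s$ ``must be engineered so as to record the iterated limits along algebraic arcs'' names the problem rather than solving it, and a single application of procedure (\ref{rel.desc.prob.sefn}.2) to $Y\times_XY$ does not suffice: one also needs the $(n+1)$-fold fiber product ($n=\rank E$) to make $f$ fiberwise injective (Proposition \ref{irred.fiber.solution}), and one needs the pseudo-Galois varieties $W^{(m)}_X$ of (\ref{W(m).defn}) built from the Stein factorization to handle fibers with varying numbers of connected components --- Example \ref{only.nash.exmp} shows the kernel of $f\circ p^*$ is otherwise only semialgebraic, so no Zariski-closed $Z$ of the kind you describe will confine the bad behaviour. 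The paper's mechanism is entirely different from yours and does not invoke Section 2 at all: after these constructions the value of the putative $\phi_X$ at $x$ is determined by the values of $r_s^{(*)}\phi$ at finitely many points of the fiber, so the finite set test produces a well-defined candidate function, whose continuity then follows from the strong descent property (\ref{function.classes.ass}.4+5) rather than from a Michael selection.

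Your patching step is a second genuine gap. Michael's theorem (\ref{35.8}) is stated for compact $Q$ and, more importantly, gives no control over the boundary values of the selection along $Z$, so ``choosing the Michael selection so that its fiber values approach $\phi_X^Z$'' is a constrained selection problem you have not solved. The paper sidesteps this entirely in (\ref{pf.of.main.thm.induct}): one first extends the lift $\phi_Z$ given by (3b) to some $\phi_X\in C^0(X,E)$ using the extension property (\ref{basic.props}), subtracts $f_s(p_s^*\phi_X)$, and thereby reduces to a section vanishing on $Y_Z$; Proposition \ref{red.step.main} then handles such sections, and the vanishing along $Z$ (which contains the boundaries of the strata $X^0_m$) makes the pieces defined over the different strata glue automatically by the closed patching condition (\ref{sections.semialg.sets}.1). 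You should incorporate this subtraction at the outset; without it, no partition-of-unity argument will produce a section of an affine (non-linear) bundle, since convex combinations of approximate sections need not improve the boundary mismatch.
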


We can now set $X_1:=Z$, ${\mathbf D}_1:={\mathbf D}_Z$
apply (\ref{main.thm.induct})
to ${\mathbf D}_1$ and get a  descent problem 
 ${\mathbf D}_2:=\bigl({\mathbf D}_1\bigr)_Z$.
Repeating this, 
we obtain  descent problems
 ${\mathbf D}_i=\bigl(p_i:Y_i\to X, f_i:p_i^*E\to F_i\bigr)$
such that 
 the dimension of $p_i(Y_i)$ drops at every step.
Eventually we reach the case where $p_i(Y_i)$ consists of points.
Then the  finite set test (\ref{fin.set.test.defn})
gives the complete answer.
The disjoint union of all the $Y_i$ can be viewed as 
a single scion, hence we get the following algebraic answer to
Question \ref{rq1}.

\begin{thm}\label{main.thm.v2} Let
${\mathbf D}=\bigl(p:Y\to X, f:p^*E\to F\bigr)$
be a descent problem. Then it has a finitely determined scion
 ${\mathbf D}_w=\bigl(p_w:Y_w\to X, f_w:p_w^*E\to F_w\bigr)$
with surjective structure map $r_w:Y_w\to Y$.

That is,   for every  $\phi\in  C^0(Y, F)$ following 
are equivalent.
\begin{enumerate} 
\item  $\phi\in \im \bigl[f\circ p^*:C^0(X,E)\to C^0(Y, F)\bigr]$.
\item  The  twisted pull-back $r_w^{(*)}\phi$ is defined and
 it is contained in the image of 
$f_w\circ p_w^*:C^0(X,E)\to C^0(Y_w, F_w)$,
\item The  twisted pull-back $r_w^{(*)}\phi$ is defined and
 satisfies the  finite set test (\ref{fin.set.test.defn}).\qed 
\end{enumerate} 
\end{thm}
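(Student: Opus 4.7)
The plan is to deduce Theorem \ref{main.thm.v2} from Proposition \ref{main.thm.induct} by induction on $\dim X$, assembling the finitely determined scion ${\mathbf D}_w$ as a disjoint union of the scions produced at each level of the recursion. For the base case $\dim X = 0$, $X$ is a finite set of closed points $x_1,\dots,x_k$; I take ${\mathbf D}_w := {\mathbf D}$ with identity structure map. The equation $\phi = f \circ p^* \phi_X$ then reduces over each point $x_i$ to finding a vector $e_i := \phi_X(x_i) \in E_{x_i}$ satisfying $\phi(y) = f(y) e_i$ for all $y$ in the compact fiber $Y_{x_i} := p^{-1}(x_i)$. The finite set test guarantees that every finite linear subsystem in the unknown $e_i \in E_{x_i}$ is consistent; since $E_{x_i}$ is finite dimensional, the family of nonempty affine solution sets (indexed by finite subsets of $Y_{x_i}$) has the finite intersection property and admits a common solution.

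For the inductive step, assume the theorem for descent problems whose base has dimension less than $\dim X$. Apply Proposition \ref{main.thm.induct} to ${\mathbf D}$: this yields a scion ${\mathbf D}_s$ of ${\mathbf D}$ with surjective structure map $r_s \colon Y_s \to Y$, a subvariety $Z \subsetneq X$ with $\dim Z < \dim X$, and the restriction ${\mathbf D}_Z$ of ${\mathbf D}_s$ to $Z$ (which is itself a scion of ${\mathbf D}$). Applying the inductive hypothesis to ${\mathbf D}_Z$ produces a finitely determined scion ${\mathbf D}_{Z,w}$ of ${\mathbf D}_Z$ with surjective structure map $r_{Z,w} \colon Y_{Z,w} \to Y_Z$. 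Define ${\mathbf D}_w$ to be the disjoint union of ${\mathbf D}_s$ and ${\mathbf D}_{Z,w}$: set $Y_w := Y_s \sqcup Y_{Z,w}$ with structure map $r_w$ equal to $r_s$ on $Y_s$ and to the composition $Y_{Z,w} \xrightarrow{r_{Z,w}} Y_Z \hookrightarrow Y$ on $Y_{Z,w}$, and let $F_w, f_w$ be the componentwise assemblies of $F_s, F_{Z,w}$ and $f_s, f_{Z,w}$. The structure map $r_w$ is surjective because $r_s$ already is.

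It remains to verify the three equivalences. The implications (1) $\Rightarrow$ (2) $\Rightarrow$ (3) are immediate: any continuous $\phi_X \in C^0(X,E)$ solving $\phi = f \circ p^*(\phi_X)$ functorially pulls back to give $r_w^{(*)}\phi = f_w \circ p_w^*(\phi_X)$, and such a section trivially satisfies the finite set test. The content lies in (3) $\Rightarrow$ (1): if $r_w^{(*)}\phi$ is defined and satisfies the finite set test, then its restriction to $Y_s$ is $r_s^{(*)}\phi$ and supplies condition (3)(a) of Proposition \ref{main.thm.induct}, while its restriction to $Y_{Z,w}$ is the twisted pull-back of $\phi|_{Y_Z}$ through the scion ${\mathbf D}_{Z,w}$ of ${\mathbf D}_Z$; by the finite determination of ${\mathbf D}_{Z,w}$ coming from the inductive hypothesis, $\phi|_{Y_Z}$ lies in the image of $f_Z \circ p_Z^* \colon C^0(Z, E|_Z) \to C^0(Y_Z, F_Z)$, supplying condition (3)(b). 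Proposition \ref{main.thm.induct} then yields $\phi \in \operatorname{im}[f \circ p^* \colon C^0(X, E) \to C^0(Y, F)]$, which is (1).

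The main obstacle is verifying that the disjoint-union assembly ${\mathbf D}_w$ genuinely meets the scion definition (\ref{rel.desc.prob.sefn}); since $F_s$ and $F_{Z,w}$ typically differ, $F_w$ cannot be obtained from $F$ via a single pull-back as in procedure (1). The remedy is to interleave the scion operations used in constructing ${\mathbf D}_s$ and ${\mathbf D}_{Z,w}$, observing that procedures (1), (2), (3) of (\ref{rel.desc.prob.sefn}) all act locally on each connected component of the source: Zariski and Euclidean density, rank conditions, and the factorizations $p^*E \to F' \hookrightarrow F$ all split coherently across the disjoint decomposition, so each operation originally performed on one component can be applied to the disjoint union while trivially fixing the other component. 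Once this bookkeeping is confirmed, the iteration produces the desired finitely determined scion.
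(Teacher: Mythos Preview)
Your proof is correct and follows essentially the same approach as the paper: the paper derives Theorem~\ref{main.thm.v2} from Proposition~\ref{main.thm.induct} by iterating the passage ${\mathbf D}\mapsto{\mathbf D}_Z$ until the base is zero-dimensional and then taking the disjoint union of all the scions produced along the way. You have recast this iteration as a formal induction on $\dim X$ and have been more explicit than the paper about two points it leaves implicit---the zero-dimensional base case (your finite-intersection argument in $E_{x_i}$) and the verification that a disjoint union of scions is again a scion---both of which are handled correctly.
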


The proof of (\ref{main.thm.v2}) works  for  many subclasses of continuous
functions as well. Next we axiomatize the necessary properties and
describe the main examples.

\subsection{Subclasses of continuous functions}{\ }

\begin{ass}\label{function.classes.ass}
For  real algebraic varieties  $Z$  we consider vector subspaces
$C^*\bigl(Z\bigr)\subset C^0\bigl(Z\bigr)$
 that satisfy
the following properties.
\begin{enumerate}
\item (Local property) If $Z=\cup_i U_i$ is an open cover of $Z$
then $\phi\in C^*\bigl(Z\bigr)$ iff
$\phi|_{U_i}\in C^*\bigl(U_i\bigr)$ for every $i$.
\item ($\reg(Z)$-module) If $\phi\in C^*\bigl(Z\bigr)$
and $h\in \reg\bigl(Z\bigr)$ is a regular function
(\ref{r.basic}) then
 $h\cdot \phi\in C^*\bigl(Z\bigr)$.
\item (Pull-back) For every morphism 
$g:Z_1\to Z_2$, composing with 
$g$ maps
$C^*\bigl(Z_2\bigr)$ to $C^*\bigl(Z_1\bigr)$.
\item (Descent property)
Let $g:Z_1\to Z_2$ be a proper,  surjective   morphism, 
 $\phi\in C^0\bigl(Z_2\bigr)$ and assume that
$\phi\circ g\in C^*\bigl(Z_1\bigr)$.
Then $\phi\in C^*\bigl(Z_2\bigr)$.
\item (Extension property) Let $Z_1\subset Z_2$ be a closed 
semialgebraic subset (\ref{sections.semialg.sets}).
Then the twisted pull-back map
$C^*\bigl(Z_2\bigr)\to C^*\bigl(Z_1\bigr) $ is surjective.
\end{enumerate}
Since every  closed 
semialgebraic subset is the image of a proper morphism
(\ref{sections.semialg.sets}),
we can unite (4) and (5) and avoid using
semialgebraic subsets as follows.
\begin{enumerate}
\item[(4+5)] (Strong descent property) 
Let $g:Z_1\to Z_2$ be a proper   morphism and 
 $\psi\in C^*\bigl(Z_2\bigr)$. Then
$\psi=\phi\circ g$ for some  $\phi\in C^*\bigl(Z_2\bigr)$
iff $\psi$ is constant on every fiber of $g$.
\end{enumerate}
The following additional condition 
comparing 2 classes $C_1^*\subset C_2^*$ is also of interest.
\begin{enumerate}\setcounter{enumi}{5}
\item (Division property) 
Let  $h\in \reg\bigl(Z\bigr)$ be any function whose zero set is nowhere 
Euclidean dense.
If $\phi\in C_1^*\bigl(Z\bigr)$
 and 
 $\phi/h\in C_2^*\bigl(Z\bigr)$ then
$\phi/h\in C_1^*\bigl(Z\bigr)$.
\end{enumerate}
\end{ass}

\begin{exmp}\label{function.classes.exmp} 
Here are some natural examples satisfying the
assumptions (\ref{function.classes.ass}.1--5).
\begin{enumerate}
\item $C^0\bigl(Z\bigr)$, the set of all
continuous functions on  $Z$.
\item $C^h\bigl(Z\bigr)$, the set of all locally 
H\"older continuous functions on $Z$.
\item $S^0\bigl(Z\bigr)$, the set of
continuous 
semialgebraic  functions on $Z$.
\end{enumerate}
Moreover, the pairs $S^0\subset C^0$ and $S^0\subset C^h$
both satisfy (\ref{function.classes.ass}.6).
(By contrast, by (\ref{counter.exmps}.2), the pair $C^h\subset C^0$ does not
 satisfy (\ref{function.classes.ass}.6).)
\end{exmp}

\begin{say}[Proof of (\ref{r.main.cors}.1)]\label{pf.of.r.main.cors}
More generally, consider two
 classes $C_1^*\subset C_2^*$ that satisfy
(\ref{function.classes.ass}.1--5) and also
(\ref{function.classes.ass}.6). 
Let ${\mathbf D}$ be a descent problem and $\phi\in C_1^*(Y,F)$.
We claim that if
$\phi=f\circ p^*(\phi_X)$ is solvable with
$ \phi_X\in C_2^*(X,E)$ then it also has a solution
$\phi=f\circ p^*(\psi_X)$ where
$ \psi_X\in C_1^*(X,E)$.

To see this, let ${\mathbf D}_w$ be a scion as in  (\ref{main.thm.v2}).
By our assumption, the twisted pull-back $r_w^{(*)}\phi$  is  in
$C_2^*\bigl(Y_w, F_w)$ and it satisfies the  finite set test.
For the  finite set test it does not matter what type of functions we
work with. Thus we need to show that $r_w^{(*)}\phi$  is  in
$C_1^*\bigl(Y_w, F_w)$.

In a scion construction, this holds for steps as in
(\ref{rel.desc.prob.sefn}.1--2) by (\ref{function.classes.ass}.3).
 The key question is (\ref{rel.desc.prob.sefn}.3). 
The solution given in (\ref{loc.lift.test}) shows that
it is equivalent to (\ref{function.classes.ass}.6).\qed
\end{say}

\begin{say}[$C^*$-valued functions over semialgebraic sets]
\label{sections.semialg.sets}
 Let $S\subset Z$ be a closed semialgebraic subset.
We can think of  $S$ as the  image of a proper  morphism
$g:W\to Z$ (cf.\ \cite[Sec.2.7]{bcr}).
One can define $C^*(S)$ either as the image of
 $C^*(Z)$ in $C^0(S)$ or as the
preimage of $C^*(W)$ under the pull-back by $g$.
By (\ref{function.classes.ass}.4+5), these two are equivalent.

We also have the following 
\begin{enumerate}
\item (Closed patching condition) Let 
$S_i\subset Z$ be  closed semialgebraic subsets.
Let $\phi_i\in C^*(S_i)$ and assume that
$\phi_i|_{S_i\cap S_j}=\phi_j|_{S_i\cap S_j}$ for every $i,j$.

Then there is a  unique $\phi\in C^*\bigl(\cup_iS_i\bigr)$
such that $\phi|_{S_i}=\phi_i$ for every $i$.
\end{enumerate}
To see this, realize each $S_i $ as the  image of some proper  morphism
$g_i:W_i\to Z$. Let $W:=\amalg_i W_i$ be their disjoint union
and $g:W\to Z$ the corresponding morphism.
Define $\psi\in C^*(W)$ by the conditions
$\psi|_{W_i}=\phi_i\circ g_i$.

The patching condition guarantees that
$\psi$ is constant on the fibers of $g$.
Thus, by (\ref{function.classes.ass}.4+5), $\psi=\phi\circ g$
for some $\phi\in C^*\bigl(\cup_iS_i\bigr)$.

These arguments also show that each $C^*(Z)$ is in fact a module over
$S^0(Z)$, the ring of continuous semialgebraic functions.

\end{say}

\begin{defn}[$C^*$-valued sections]\label{basic.props}

By Serre's theorems, every vector bundle on a complex affine  variety 
  can be written as a
quotient bundle of a trivial bundle and also as a
subbundle of a trivial bundle. Furthermore,
every extension of vector bundles splits.
 
Thus,  on a real algebraic variety, 
every algebraic vector bundle  can be written as a
quotient bundle (and a subbundle) of a trivial bundle
and every constant rank map of  vector bundles splits.

Let $F$ be an algebraic  vector bundle on $Z$
and $Z=\cup_i U_i$ an open cover such that
$F|_{U_i}$ is trivial of rank $r$ for every $i$. Let
$$
C^*\bigl(Z, F\bigr)\subset C^0\bigl(Z, F\bigr)
$$
denote the set of those sections $\phi\in C^0\bigl(Z, F\bigr)$
such that
$\phi|_{U_i}\in C^*\bigl(U_i\bigr)^r$ for every $i$.
If $C^*$ satisfies the properties (\ref{function.classes.ass}.1--2),
 this is independent of the
trivializations and the choice of the covering.

If  $C^*$ satisfies the properties (\ref{function.classes.ass}.1--6)
then their natural analogs also hold for $C^*\bigl(Z, F\bigr)$.
This is clear for the properties (\ref{function.classes.ass}.2--4)
and (\ref{function.classes.ass}.6).

In order to check the extension property (\ref{function.classes.ass}.5)
first note that  we have the following.
\begin{enumerate}
\item
 Let  $f:F_1\to F_2$ be a surjection of 
vector bundles. Then 
$f:C^*\bigl(Z, F_1\bigr)\to C^*\bigl(Z, F_2\bigr)$ is surjective. 
\end{enumerate}
Now  let  $Z_1\subset Z_2$ be an closed subvariety
and $F$ a vector bundle on $Z_2$. Write it as  a quotient of a
trivial bundle $\c_{Z_2}^N$. Every section 
$\phi_1\in C^*\bigl(Z_1, F|_{Z_1}\bigr)$ lifts to a
section in $C^*\bigl(Z_1, \c_{Z_1}^N\bigr)$
which in turn extends to  a
section in $C^*\bigl(Z_2, \c_{Z_2}^N\bigr)$ by (\ref{function.classes.ass}.6).
The image of this lift in $C^*\bigl(Z_2, F|_{Z_2}\bigr)$
gives the required lifting of  $\phi_1$.
\end{defn}

\subsection{Local tests and reduction steps}{\ }

Next we consider various  descent problems
whose solution is unique, if it exists.

\begin{say}[Pull-back test] \label{pull-back.test}
Let $g:Z_1\to Z_2$ be a proper surjection of real algebraic varieties.
Let $F$ be a vector bundle on $Z_2$ and
$\phi_1\in C^*\bigl(Z_1, g^*F\bigr)$. When can we write
$\phi_1=g^*\phi_2$ for some  $\phi_2\in C^*\bigl(Z_2, F\bigr)$?

{\it Answer:} By  (\ref{function.classes.ass}.4), such a $\phi_2$ exists
iff $\phi_1$ is constant on every fiber of $g$.  This can be checked
as follows.

Take the fiber product $Z_3:=Z_1\times_{Z_2}Z_1$ with projections
$\pi_i:Z_3\to Z_1$ for $i=1,2$. Note that $F_3:=\pi_1^*g^*F$ is naturally
isomorphic to $\pi_2^*g^*F$. We see that 
 $\phi_1$ is constant on every fiber of $g$ iff
$$
 \pi_1^*\phi_1- \pi_2^*\phi_1\in C^*\bigl(Z_3, F_3\bigr)
\qtq{is identically 0.} 
$$
Note that this solves descent problems 
${\mathbf D}=\bigl(p:Y\to X, f:p^*E\cong F\bigr)$
where $f$ is an isomorphism.
We use two simple cases.
\begin{enumerate}
\item Assume that there  is a closed subset  $Z\subset X$ such that
$p$ induces  an isomorphism $Y\setminus p^{-1}(Z)\to X\setminus Z$
and $\phi_Y\in C^0\bigl(Y, p^*E\bigr)$  vanishes along
$p^{-1}(Z)$. Then there is a $\phi_X\in C^0\bigl(X, E\bigr)$
such that $\phi_Y=p^*\phi_X$ 
(and $\phi_X$  vanishes along $Z$.)
\item Assume that there is a finite group $G$ acting on $Y$
such that $G$ acts transitively on every fiber of
$\bigl(Y\setminus(\phi_Y=0)\bigr)\to X$. 
Then  there is a $\phi_X\in C^0\bigl(X, E\bigr)$
such that $\phi_Y=p^*\phi_X$. 
\end{enumerate}
\end{say}

\begin{say}[Wronskian test]
\label{2.good.cases}
 Let $\phi,f_1,\dots, f_r$ be  functions on a set $Z$.
Assume that the $f_i$ are linearly independent.
Then $\phi$ is a  linear combination of the $f_i$
(with constant coefficients)
iff the  determinant 
$$
\left|
\begin{array}{cccc}
f_1({\mathbf z}_1) & \cdots & f_1({\mathbf z}_r) &  f_1({\mathbf z}_{r+1})\\
\vdots &&\vdots & \vdots \\
f_r({\mathbf z}_1) & \cdots & f_r({\mathbf z}_r) &   f_r({\mathbf z}_{r+1})\\
\phi({\mathbf z}_1) &\cdots & \phi({\mathbf z}_r) &   \phi({\mathbf z}_{r+1})
\end{array}
\right|
$$
is identically zero as a function on $Z^{r+1}$.

Proof. Since the $f_i$ are linearly independent, 
there are  ${\mathbf z}_1,\dots, {\mathbf z}_r\in Z$
such that 
the upper left $r\times r$ subdeterminant of  is nonzero.
Fix these  ${\mathbf z}_1,\dots, {\mathbf z}_r$ and solve the linear system
$$
\phi({\mathbf z}_i)=\tsum_j\ 
\lambda_j f_j({\mathbf z}_i)\qtq{for $i=1,\dots,r$.}
$$
Replace $\phi$ by $\psi:=\phi-\sum_i \lambda_if_i$
and let ${\mathbf z}_{r+1}$ vary.
Then our determinant is
$$
\left|
\begin{array}{cccc}
f_1({\mathbf z}_1) & \cdots & f_1({\mathbf z}_r) &  f_1({\mathbf z}_{r+1})\\
\vdots &&\vdots & \vdots \\
f_r({\mathbf z}_1) & \cdots & f_r({\mathbf z}_r) &   f_r({\mathbf z}_{r+1})\\
0 &\cdots & 0 &   \psi({\mathbf z}_{r+1})
\end{array}
\right|
$$
and it vanishes iff $\psi({\mathbf z}_{r+1})$
is identically zero.
That is, when $\phi\equiv \sum_j \lambda_j f_j$.\qed
\end{say}

\begin{say}[Linear combination test]\label{lin.comb.test.I}
Let $Z$ be  a  real algebraic variety,  $F$ a vector bundle on $Z$
and  $f_1,\dots, f_r$ linearly independent algebraic sections of $F$.

Given 
$\phi\in C^*\bigl(Z, F\bigr)$, when can we write
$\phi=\sum_i \lambda_if_i$ for some  $\lambda_i\in  \c$?

{\it Answer:} One can either write down a determinental
criterion similar to (\ref{2.good.cases}) or
 reduce this to the Wronskian test
 as follows. 

Consider  $q:\p(F)\to X$, the  space of 1-dimensional
quotients of $F$. Let $u:q^*F\to Q$ be the universal quotient line bundle.
Then $\phi=\sum_i \lambda_if_i$
iff 
$$
u\circ q^*(\phi)=\tsum_i \lambda_i\cdot u\circ q^*(f_i).
$$
The latter is enough to check on a Zariski open cover
of $\p(F)$ where $Q$ is trivial. Thus we recover the
 Wronskian test.\qed
\end{say}

\begin{say}[Membership test for sheaf injections]\label{loc.lift.test}
 Let $Z$ be a real algebraic variety, $E,F$ algebraic vector bundles
and $h: E\to F$  a vector bundle 
map such that $\rank h=\rank E$   on a 
Euclidean dense Zariski open set $Z^0\subset Z$.
Given a section $\phi\in C^*\bigl(Z, F\bigr)$, 
when is it
in the image of $h:C^*\bigl(Z, E\bigr)\to
C^*\bigl(Z, F\bigr)$?

{\it Answer:} Over  $Z^0$, there is a
quotient map
$q:F|_{Z^0}\to Q_{Z^0}$ where $\rank Q_{Z^0}=\rank F-\rank E$ and  
 $\im \bigl(h|_{Z^0}\bigr)=\ker q$.
Then the first lifting condition is:

(1) $q(\phi)=0$. Note that, in the local coordinate functions of $\phi$,
this is a linear condition with polynomial coefficients.

By (\ref{basic.props}.3), $h|_{Z^0}$ has an algebraic splitting
$s:F|_{Z^0}\to E|_{Z^0}$. Note that $s$ is not unique on $E$ but it is
unique on the image of $h$. 
Thus the second  condition says:

(2) The section  $s\bigl(\phi|_{Z^0}\bigr)\in
C^*\bigl(Z^0, E|_{Z^0}\bigr)$ 
extends to a section  of $C^*\bigl(Z, E\bigr)$. 

In order to make this more explicit, choose
local algebraic trivializations of $E$ and of $F$.
Then $\phi$ is given by coordinate functions
$(\phi_1,\dots, \phi_m)$ and $s$ is given by a matrix
$(s_{ij})$ where the $s_{ij}$ are rational functions on $Z$ that are regular on
$Z^0$.  We can bring them to  common denominator and
 write  $s_{ij}=u_{ij}/v$ where
$u_{ij}$ and $v$ are regular on $Z$.
Thus  
$$  
s\bigl(\phi|_{Z^0}\bigr)=
\Bigl(\sum_j s_{1j}\phi_j, \dots,\sum_j s_{nj}\phi_j\Bigr) =
\frac1{v}\Bigl(\sum_j u_{1j}\phi_j, \dots,\sum_j u_{nj}\phi_j\Bigr).
$$  
Let $\Phi$ denote the vector function in the parenthesis on the right.
Then $\Phi\in C^*(Z,E)$ and we are asking if $\Phi/v\in  C^*(Z,E)$
or not. This is exactly one of the question considered in 
  Part 2 of   (\ref{outline}).

Also, if we are considering two function classes
$C_1^*\subset C_2^*$, then (\ref{loc.lift.test}.3) and the assumption
(\ref{function.classes.ass}.6) say that 
 a function $\phi\in C_1^*(Z,F)$ is in the image of
$h:C_2^*\bigl(Z, E\bigr)\to
C_2^*\bigl(Z, F\bigr)$ iff it  is in the image of
$h:C_1^*\bigl(Z, E\bigr)\to
C_1^*\bigl(Z, F\bigr)$.
\qed
\end{say}


\begin{say}[Resolution of singularities]\label{res.say}
Let
${\mathbf D}=\bigl(p:Y\to X, f:p^*E\to F\bigr)$ be  a descent problem.
By Hironaka's theorems (see \cite[Chap.3]{res-book} for a relatively
simple treatment)
there is a resolution of singularities
$r_0:Y'\to Y$. That is, $Y'$ is smooth and $r_0$ is proper and birational
(that is, an isomorphism over a Zariski dense open set).
Note however, that $r_0$ is not surjective in general.
In fact, $r_0(Y')$ is precisely the
Euclidean closure of the smooth locus $Y^{ns}$. 
Thus $Y\setminus r_0(Y')\subset \sing(Y)$. 

We resolve $\sing Y$ to obtain  $r_1: Y'_1\to \sing (Y)$.
The resulting map $Y'\amalg Y'_1\to Y$ is surjective,
except possibly along $\sing(\sing(Y))$. We
can next resolve $\sing(\sing(Y))$ and so on.
After at most $\dim Y$ such steps, we obtain
a smooth, proper morphism $R:Y^R\to Y$
such that $Y^R$ is smooth and $R$ is surjective.
$R$ is an isomorphism over $Y^{ns}$ but
it can have many irreducible components that map
to $\sing(Y)$.

We refer to $Y'\subset Y^R$ as the {\it main components} of
the resolution. 
\end{say}

\begin{prop} \label{irred.fiber.solution} Let
${\mathbf D}=\bigl(p:Y\to X, f:p^*E\to F\bigr)$ be  a descent problem.
Assume that $X,Y$ are irreducible, the generic fiber of $p$
is irreducible, smooth and  $h(x):E(x)\to C^0\bigl(Y_x, F|_{Y_x}\bigr)$
is an injection for general  $x\in p(Y)$.
Then ${\mathbf D}$ has a  scion
$
{\mathbf D}_s=\bigl(p_s:Y_s\to X, f_s:p_s^*E\to F_s\bigr)
$
with  surjective  structure map $r_s:Y_s\to Y$ such that
\begin{enumerate}
\item $Y_s$ is a disjoint union $Y_s^h\amalg Y_s^v$,
\item $\dim p_s\bigl(Y_s^v\bigr)<\dim X$ and
\item $f_s$  is an isomorphism over  $Y_s^h$.
\end{enumerate}
\end{prop}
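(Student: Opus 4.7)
The plan is to construct the scion via one application of scion operation (\ref{rel.desc.prob.sefn}.2) (fiber product over $X$) followed by (\ref{rel.desc.prob.sefn}.3), the key idea being that the injectivity of $h(x)$ supplies finitely many points in the generic fiber whose pointwise linear maps $f_{y_i}$ have trivial common kernel. After passing to a suitable fiber product the combined map will have full generic rank $\rank E$, and then (\ref{rel.desc.prob.sefn}.3) will let me replace the target by $p^*E$ and the map by the identity.

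First, the trivial case: if $p(Y)$ is a proper subvariety of $X$, then $\dim p(Y)<\dim X$ since $X$ is irreducible and $p$ is proper, so I take $Y_s=Y$, $r_s=$~identity, $Y_s^v=Y$, $Y_s^h=\emptyset$ and am done. So assume $p$ is surjective. I also apply (\ref{rel.desc.prob.sefn}.1) to a resolution $R:Y^R\to Y$ from \ref{res.say} (proper surjective, $Y^R$ smooth) so that later Zariski dense opens become Euclidean dense. The generic fiber is unchanged (it was already smooth, so $R$ is an isomorphism over it), so $h(x)$ injective still holds.

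The hypothesis that $h(x):E(x)\to C^0\bigl(Y_x,F|_{Y_x}\bigr)$ is injective says precisely that $\bigcap_{y\in Y_x}\ker f_y=0$ in $E(x)$. Since descending chains of subspaces of $E(x)$ stabilize in at most $r:=\rank E$ steps, there is a uniform integer $m\leq r$ such that, for generic $x$ and generic $(y_1,\dots,y_m)\in Y_x^m$, $\bigcap_i\ker f_{y_i}=0$. Let $Y_w$ be the unique irreducible component of $Y\times_X\cdots\times_X Y$ ($m$ copies) that dominates $X$; over a generic $x$ this main component is $Y_x^m$, which is irreducible since $Y_x$ is. Applying (\ref{rel.desc.prob.sefn}.2) to the full fiber product with its $m$ projections, then (\ref{rel.desc.prob.sefn}.1) to restrict to $Y_w$ (and further resolve $Y_w$ by (\ref{rel.desc.prob.sefn}.1) if it is singular, to preserve the Zariski-vs-Euclidean density property), I obtain the scion
\[
\bigl(p_w:Y_w\to X,\ f_w:=\tsum_{i=1}^m r_i^*f:p_w^*E\to \tsum_{i=1}^m r_i^*F\bigr),
\]
where each $r_i:Y_w\to Y$ is proper (base change of the proper $p$, restricted to a closed subvariety) and surjective (its image is closed and contains the generic $Y_x$). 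By the choice of $m$, $f_w$ has rank $r$ at the generic point of $Y_w$.

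Finally, I apply (\ref{rel.desc.prob.sefn}.3) with the factorization $f_w=j\circ q$ given by $F':=p_w^*E$, $q:=$~identity of $p_w^*E$, and $j:=f_w$. Since $j$ has rank $r=\rank F'$ on a Zariski open subset of the smooth $Y_w$---hence on a Euclidean dense subset---the hypothesis of (\ref{rel.desc.prob.sefn}.3) is satisfied. The resulting descent problem has bundle map $f_s=$~identity of $p_w^*E$, an isomorphism on all of $Y_w$. Setting $Y_s:=Y_w$, $r_s:=r_1$ (surjective), $Y_s^h:=Y_w$, and $Y_s^v:=\emptyset$ verifies (1)--(3), with (2) vacuous. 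The main thing to pin down is the uniform choice of $m$ across generic $x\in X$ together with the upper semicontinuity argument that $f_w$ truly attains rank $r$ at the generic point of $Y_w$ as a whole (not merely on each individual fiber of $p_w$); this is standard semicontinuity of rank for a morphism of vector bundles on an irreducible variety, but worth writing out carefully.
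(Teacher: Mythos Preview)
Your overall strategy---pass to an $m$-fold fiber product so that the diagonal map $p_w^*E\to\sum r_i^*F$ becomes generically injective, then invoke operation (\ref{rel.desc.prob.sefn}.3) to replace the target by $p_w^*E$---is exactly the paper's. The gap is in your attempt to make $Y_s^v=\emptyset$.

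The problem is the parenthetical step ``further resolve $Y_w$ \dots if it is singular.'' Even if you first resolve $Y$, the fiber product $Y_w$ need not be smooth, so this step is genuinely needed. But resolving $Y_w$ as in (\ref{res.say}) forces a dichotomy: either you take the full $Y_w^R$ (surjective onto $Y_w$, but with extra components over $\sing(Y_w)$ on which $f_w$ can fail to have rank $r$, so the Euclidean-dense-open hypothesis of (\ref{rel.desc.prob.sefn}.3) fails on those components), or you take only the main component $Y_w'$ (smooth and irreducible, so (\ref{rel.desc.prob.sefn}.3) applies, but $Y_w'\to Y_w$ is no longer surjective). In the second case your surjectivity argument for $r_1$ does not survive: the image of $Y_w'\to Y$ is only $r_1\bigl(\overline{Y_w^{ns}}^{\mathrm{Eucl}}\bigr)$, and over $\r$ ``Euclidean closed and Zariski dense'' does not imply ``all of $Y$'' (think of the handle of the Whitney umbrella). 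Your justification ``its image is closed and contains the generic $Y_x$'' silently assumes Zariski-closedness of the real image, which is not available here; the honest reason $r_1:Y_w\to Y$ surjects before resolution is that the diagonal $\Delta\cong Y$ sits inside $Y_w$, and that argument breaks once you pass to $Y_w'$.

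The paper handles this by keeping \emph{both} pieces: the main components of the resolution become $Y_s^h$ (where (\ref{rel.desc.prob.sefn}.3) is applied and $f_s$ becomes an isomorphism), and the extra components become $Y_s^v$. Surjectivity of the full $Y_s\to Y$ then comes for free from (\ref{res.say}), and since the generic fiber of $p$ is smooth, $\sing(Y^{n+1}_X)$---hence $Y_s^v$---sits over a lower-dimensional subvariety of $X$. This is precisely why $Y_s^v$ appears in the statement; it is not decorative. Your preliminary resolution of $Y$ is harmless but does not circumvent the issue, since smoothness of $Y$ does not make $Y_w$ smooth.
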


Proof. Set $n=\rank E$ and 
let $Y^{n+1}_X$ be the  union of the dominant components 
(\ref{r.basic}) of the
$n+1$-fold fiber product of $Y\to X$ with
coordinate projections $\pi_i$. 
Let $\tilde p:Y^{n+1}_X\to X$ be the map given by any of the $p\circ \pi_i$.
Consider the diagonal map
$$
\tilde f: \tilde p^*E\to \tsum_{i=1}^{n+1} \pi_i^*F
$$
which is an injection 
over a Zariski dense Zariski open set $Y^0\subset Y^{n+1}_X$ by assumption.
By (\ref{rel.desc.prob.sefn}),
 these define a scion of ${\mathbf D}$ with  surjective  
structure map.

We want to use  the Local lifting test (\ref{loc.lift.test}) to replace
$\tsum_{i=1}^{n+1} \pi_i^*F$ by $\tilde p^*E$.
For this we need $Y^0$ to be also Euclidean dense. 
To achieve this, we resolve $Y^{n+1}_X$
as in (\ref{res.say}) to get $Y_s$. The main components
give $Y_s^h$ but we  may have introduced some
other components  $Y_s^v$ that map to
$\sing (Y)$. Since the general fiber of $p$
is smooth,  $Y_s^v$ maps to a lower dimensional
subvariety of $X$.
\qed

\begin{prop} \label{irred.fiber.solution.II} Let
${\mathbf D}=\bigl(p:Y\to X, f:p^*E\to F\bigr)$ be  a descent problem.
Assume that $X,Y$ are irreducible and the generic fiber of $p$
is irreducible and  smooth. Then there is a commutative diagram
$$
\begin{array}{ccc}
\bar Y & \stackrel{\tau_Y}{\to} & Y\\
\bar p\downarrow  \hphantom{p} &&   \hphantom{p}\downarrow p\\
\bar X & \stackrel{\tau_X}{\to} & X
\end{array}
$$
where $\tau_X, \tau_Y$ are proper, birational
and there is a quotient bundle $\tau_X^*E\onto\bar E$
such that 
$\bar p^*\tau_X^*E\to  \tau_Y^*F$ factors through
$\bar p^*\bar E$ and
the descent problem
$$
\bar {\mathbf D}=\bigl(\bar p:\bar Y\to \bar X, \bar f:\bar p^*\bar E\to 
\bar F:= \tau_Y^*F\bigr)
$$
satisfies the assumptions of (\ref{irred.fiber.solution}).
That is,
  $\bar f(x):\bar E(x)\to 
C^0\bigl(\bar Y_{x}, \tau_Y^*F|_{\bar Y_{x}}\bigr)$
is an injection for general  $x\in \bar p(\bar Y)$.

Moreover, if a finite group $G$ acts on ${\mathbf D}$ then
we can choose  $\bar {\mathbf D}$
such that the $G$-action lifts to $\bar {\mathbf D}$.
\end{prop}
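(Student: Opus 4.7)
The plan is to isolate, as a coherent subsheaf $K \subset E$, the fiberwise kernel of the map $E(x) \to C^0(Y_x, F|_{Y_x})$, and then to blow up $X$ (resolving $Y$ accordingly) so that the quotient $E/K$ becomes an honest vector bundle $\bar E$. Since $p$ is proper and $F$ is coherent, $p_*F$ is a coherent sheaf on $X$, and the adjoint of $f : p^*E \to F$ is a morphism $E \to p_*F$; let $K$ denote its kernel. By cohomology and base change there is a Zariski dense open $U \subset X$ over which $K(x)$ equals the kernel of the algebraic evaluation map $\hat f(x): E(x) \to H^0(Y_x, F|_{Y_x})$. Because $\hat f(x)(e)$ is an algebraic section of an algebraic vector bundle on $Y_x$, it vanishes in $H^0$ iff it vanishes in $C^0$, so over $U$ the sheaf $K$ also computes the kernel on continuous sections.

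Next I apply Hironaka's flattening together with resolution of singularities (cf.\ \cite[Chap.III]{res-book} and (\ref{res.say})) to produce a proper birational morphism $\tau_X : \bar X \to X$ with $\bar X$ smooth, such that the torsion-free pull-back $\tau_X^*(E/K)/(\text{torsion})$ is locally free; call this quotient $\bar E$, and let $\bar K \subset \tau_X^*E$ be the kernel sub-bundle, so that
\[
0 \to \bar K \to \tau_X^* E \to \bar E \to 0.
\]
For the total space, let $\bar Y$ be a resolution of the dominant components of $\bar X \times_X Y$ as in (\ref{res.say}), with induced maps $\tau_Y : \bar Y \to Y$ and $\bar p : \bar Y \to \bar X$. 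Since $\tau_X$ is an isomorphism over $U$ and the generic fiber of $p$ is irreducible and smooth, the generic fiber of $\bar p$ is again irreducible and smooth. The composite $\bar p^*\bar K \hookrightarrow \bar p^*\tau_X^*E = \tau_Y^*p^*E \xrightarrow{\tau_Y^*f} \tau_Y^*F$ is a morphism of algebraic vector bundles on $\bar Y$ which, by the very definition of $K$, vanishes over the dense Zariski open $\bar p^{-1}(\tau_X^{-1}(U))$; hence it vanishes identically, giving the required factorization $\bar f : \bar p^*\bar E \to \tau_Y^*F$. For $x$ in the dense open $\tau_X^{-1}(U) \subset \bar p(\bar Y)$, the induced fiberwise map $\bar E(x) \to H^0(\bar Y_x, \tau_Y^*F|_{\bar Y_x})$ is injective by construction, hence also injective into $C^0(\bar Y_x, \tau_Y^*F|_{\bar Y_x})$, as required by (\ref{irred.fiber.solution}).

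For the equivariant statement, if a finite group $G$ acts on $\mathbf D$, then the action descends to $p_*F$, hence to $K$ and $E/K$; both Hironaka's flattening and resolution of singularities may be carried out $G$-equivariantly, so the entire construction lifts the $G$-action to $\bar{\mathbf D}$. The main technical point to watch is the interplay between the flattening blow-up on $X$ and the fiber product with $Y$: one must take only the dominant components of $\bar X \times_X Y$ before resolving, so that the generic fiber of $\bar p$ is genuinely the (smooth, irreducible) generic fiber of $p$ pulled back, rather than some unwanted exceptional component. Once this bookkeeping is under control, the remaining verifications (factorization, injectivity, equivariance) are formal consequences of the construction of $K$.
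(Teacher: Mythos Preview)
Your approach is essentially the same as the paper's: take the (co)kernel of the adjunction map $E\to p_*F$, then birationally modify $X$ so that the resulting quotient of $E$ becomes a vector bundle. The paper works with the image $E'_{\c}=\im[E_{\c}\to (p_{\c})_*F_{\c}]$ rather than the kernel, and uses the Grassmannian closure of (\ref{loc.free.image}) rather than Hironaka flattening to produce $\tau_X:\bar X\to X$; these choices are interchangeable and neither buys anything over the other.

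There is, however, one genuine point you skate over and the paper makes explicit. Your sentence ``it vanishes in $H^0$ iff it vanishes in $C^0$'' is exactly where the hypothesis that the generic fiber is smooth and irreducible enters, and it is not automatic for real varieties. The sheaf $p_*F$ and its base-change behaviour live on the complex side, so for general $x$ your $K(x)$ is the kernel of $E(x)\to H^0\bigl((Y_{\c})_x,F_{\c}\bigr)$. To conclude that $\bar E(x)=E(x)/K(x)$ injects into $C^0(Y_x,F|_{Y_x})$ you need the converse: if the algebraic section $f(e)$ vanishes at every \emph{real} point of $Y_x$, then it vanishes on all of $(Y_{\c})_x$. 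This holds precisely because, for general $x\in p(Y)$, the fiber $Y_x$ is smooth, irreducible, and has a real point, hence its real locus is Zariski dense in $(Y_{\c})_x$. The paper states this step explicitly (``the real points $Y_x$ are Zariski dense in the complex fiber $(Y_{\c})_x$, thus $H^0((Y_{\c})_x,F_{\c})=H^0(Y_x,F)$''); you should add the same sentence, since without it the algebraic kernel $K$ need not agree with the kernel into continuous sections and the injectivity claim for $\bar f(x)$ would be unjustified.
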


(Note that, as shown by (\ref{only.nash.exmp}), the conclusions can fail
if the general fibers of $p$ are not irreducible.)
\medskip

Proof. 
Complexify $p:Y\to X$ to get a
complex proper morphism $p_{\c}: Y_{\c}\to X_{\c}$  and set
$$
E'_{\c}:= \im\bigl[E_{\c}\to  
\bigl(p_{\c}\bigr)_* F_{\c}\bigr].
$$
Let $x\in p(Y)$ be a general point. Then $Y_x$ is irreducible
and the real points $Y_x$ are Zariski dense
in the complex fiber  $\bigl(Y_{\c}\bigr)_x$.
Thus $H^0\bigl(\bigl(Y_{\c}\bigr)_x, F_{\c}\bigr)=
H^0\bigl( Y_x, F\bigr)$. 

So far $E'_{\c}$ is only a coherent sheaf which is
a quotient of $E_{\c}$. Using (\ref{loc.free.image}) and then
 (\ref{irred.fiber.solution}),
we obtain $\tau_X:\bar X\to X$ as desired.\qed

\begin{say}\label{loc.free.image} Let $X$ be an irreducible variety
 $q:E\to E'$ a map of vector bundles on $X$.
In general we can not write $q$ as a composite of a surjection
of vector bundles followed by an injection, but
the following construction shows how to achieve this
after modifying $X$.

Let $\grass(d,E)\to X$ be the universal Grassmann bundle of 
rank $d$ quotients of $E$ where $d$ is the  rank of $q$ at a general point. 
At a general point $x\in X$, $q(x):E(x)\onto \im q(x)\subset E'(x)$
 is such a quotient.
Thus $q$ gives a rational map
$X\map \grass(d,E)$, defined on a Zariski dense 
Zariski open subset. Let $\bar X\subset \grass(d,E)$
 denote the
closure of its image  and $\tau_X:\bar X\to X$
 the projection.
Then  $\tau_X$ is a proper birational morphism
and we have a decomposition
 $$
\tau_X^*q: \tau_X^*E\stackrel{s}{\onto} \bar E\stackrel{j}{\into}  \tau_X^*E'
$$
where $\bar E$  is a vector bundle of rank $d$ on $\bar X$,
$s$ is a rank $d$ surjection everywhere and $j$
is a rank $d$ injection on a Zariski dense 
Zariski open subset.
\end{say}

\subsection{Proof of the main algebraic theorem}{\ }

In order to answer  Question \ref{rq1}
 in general, we try to create a situation
where   (\ref{irred.fiber.solution.II}) applies. 

First, using (\ref{res.say}) we may assume that $Y$ is smooth.
Next
take the Stein factorization
$Y\to W\to X$; that is, $W\to X$ is finite and
all the fibers of $Y\to W$ are connected
(hence general fibers are irreducible).

After some modifications,
 (\ref{irred.fiber.solution}) applies to
$Y\to W$, thus we are reduced to comparing
$C^0(W, p_W^*E)$ and $C^0(X,E)$.   

This is easy if $W\to X$ is Galois, since then
the sections of $p_W^*E$ that are invariant under the Galois group
descend to sections of $E$.

If $p:W\to X$ is a finite morphism of (smooth or at least normal) 
varieties over $\c$,
the usual solution would be to take the Galois closure of the 
field extension $\c(W)/\c(X)$ and 
let $W^{Gal}\to X$ be the normalization of $X$ in it.
Then the Galois group $G$ acts on $W^{Gal}\to X$
and the action is transitive on every fiber.

This does not work for real varieties since in general,
$W^{Gal}$ has no real points. 
(For instance, take $X=\r$ and let $W\subset \r^2$ be any curve
given by an irreducible equation of the form 
$y^m=f(x)$. If $m=2$ then $W/X$ is Galois but for
$m\geq 3$   the  Galois closure $W^{Gal}$ has no real points.) 
Some other problems are illustrated by the next example.

\begin{exmp}\label{only.nash.exmp} Let  $W\subset \r^2$ be defined by
$(y^5-5y=x)$ with $p:W\to \r^1_x=:X$ the projection.
Set $E=\c^4_X$ and $F=\c_W$ with 
$f:p^*E\to F$ given by  $f(\psi(x)e_i)=y^i\psi(x)|_W$.

Note that $p$ has degree 5 as a map of (complex) Riemann surfaces,
but $p^{-1}(x)$ consists of 3 points for $-1<x<1$
and of 1 point if $|x|>1$. 
Therefore,  the kernel of 
$f\circ p^*(x):\c^4=E(x)\to C^0\bigl(W_x, F|_{W_x}\bigr)$
has rank 1 if $-1<x<1$ and rank 3 if $|x|>1$.
Thus $\ker \bigl(f\circ p^*\bigr)\subset E$ is a rank 1 subbundle
on the interval  $-1<x<1$ and a  rank 3  subbundle
on the intervals  $|x|>1$.

These kernels depend only on some of the 5 roots of
$y^5-5y=x$, hence they are  semialgebraic  subbundles
but not real algebraic subbundles.
\end{exmp}

As a replacement of the  Galois closure $W^{Gal}$, we next introduce
a series of varieties  $W^{(m)}_X\to X$.
The $W^{(m)}_X$  are usually reducible,
the symmetric group $S_m$ acts on them, but
 the  $S_m$-action is usually  not transitive on every fiber.
Nonetheless,  all  the $W^{(m)}_X$ together  provide a suitable analog of the
Galois closure.

\begin{defn} \label{W(m).defn}
Let $s:W\to X$ be a finite morphism of (possibly reducible) varieties and
 $X^0\subset X$  the largest Zariski open subset over which $p$ is smooth.

Consider the $m$-fold fiber product
$W^m_X:=W\times_X\cdots\times_XW$ with coordinate projections
$\pi_i:W^m_X\to W$. For every $i\neq j$,
let $\Delta_{ij}\subset W^m_X$ be the preimage of the diagonal
 $\Delta\subset W\times_XW$ under the map $(\pi_i,\pi_j)$.
Let $W^{(m)}_X\subset W^m_X$ be the 
union of the dominant components in the 
closure of
$W^m_X\setminus\cup_{i\neq j}\Delta_{ij}$
with projection $s^{(m)}:W^{(m)}_X\to X$.
The symmetric group $S_m$ acts on $W^{(m)}_X$ by permuting the factors.

If $x\in X^0$ then
$\bigl(s^{(m)}\bigr)^{-1}(x)$ consists of
ordered $m$-element subsets of $s^{-1}(x)$.
Thus $\bigl(s^{(m)}\bigr)^{-1}(x)$ is empty if
$|s^{-1}(x)|<m$ and
$S_m$ acts transitively on $\bigl(s^{(m)}\bigr)^{-1}(x)$ if
$|s^{-1}(x)|=m$.
If $\bigl|s^{-1}(x)\bigr|>m$ then 
$S_m$ does not act  transitively on
$\bigl(s^{(m)}\bigr)^{-1}(x)$.
We obtain a decreasing sequence of semialgebraic subsets
$$
s^{(1)}\bigl(W^{(1)}_X\bigr)\supset s^{(2)}\bigl(W^{(2)}_X\bigr)\supset \cdots.
$$
Set 
$$
X_{W,m}^0:=X^0\cap  \Bigl(s^{(m)}\bigl(W^{(m)}_X\bigr)\setminus
s^{(m+1)}\bigl(W^{(m+1)}_X\bigr)\Bigr).
$$
The $X_{W,m}^0$ are disjoint, 
$\bigcup_mX_{W,m}^0$ is a Euclidean dense semialgebraic open subset
of $p(Y)\cap X^0$ and the $S_m$-action is transitive on the fibers of
$s^{(m)}$ that lie over $X_{W,m}^0$. 
Thus $s^{(m)}:W^{(m)}\to X$ behaves like a Galois extension
over $X_{W,m}^0$ and together the $X_{W,m}^0$
cover most of $X$.

Let now $p:Y\to X$ be a proper morphism of  (possibly reducible) 
normal varieties
with Stein factorization $p:Y\stackrel{q}{\to} W\stackrel{s}{\to} X$.
Let $Y^m_X$ denote the $m$-fold fiber product $Y\times_X\cdots\times_XY$
 with coordinate projections
$\pi_i:Y^m_X\to Y$.

Let  $Y^{(m)}_X\subset Y^m_X$ 
denote the  dominant parts of the 
preimage of  $W^{(m)}_X$ under the natural map
$q^m:Y^m_X\to W^m_X$ with projection $p^{(m)}:Y^{(m)}_X\to X$.
Note that, for general $x\in X$, 
 $\bigl(p^{(m)}\bigr)^{-1}(x)$ is empty if
$p^{-1}(x)$  has fewer than $m$ irreducible components
and
$S_m$ acts transitively on the  irreducible components of 
$\bigl(p^{(m)}\bigr)^{-1}(x)$ if
$p^{-1}(x)$  has exactly $m$ irreducible components.
Thus we obtain a decreasing sequence of semialgebraic subsets
$p^{(1)}\bigl(Y^{(1)}_X\bigr)\supset p^{(2)}\bigl(Y^{(2)}_X\bigr)\supset \cdots $.

Let $F$ be a vector bundle on $Y$.
Then $\oplus_i \pi_i^*F$ is a   vector bundle on  $Y^m_X$.
Its restriction to  $Y^{(m)}_X$ is denoted by $F^{(m)}$. 

Note that the $S_m$-action on  $Y^{(m)}_X$ naturally lifts to
an  $S_m$-action on  $F^{(m)}$. 
If $E$ is a   vector bundle on  $X$ and $f:p^*E\to F$ is a
  vector bundle map then we get an $S_m$-invariant
 vector bundle map 
$f^{(m)}:\bigl(p^{(m)}\bigr)^*E\to F^{(m)}$. 
For each $m$ we get a scion of  ${\mathbf D}$
$$
{\mathbf D}^{(m)}:=\bigl(p^{(m)}:Y^{(m)}_X\to X, 
f^{(m)}:\bigl(p^{(m)}\bigr)^*E\to F^{(m)}\bigr).
$$
Below, we will use all the ${\mathbf D}^{(m)}$ together to get a
 scion with Galois-like properties.
\end{defn}

\begin{say}[Proof of (\ref{main.thm.induct})]\label{pf.of.main.thm.induct}

If  ${\mathbf D}_s$ is a scion of  ${\mathbf D}$
with  surjective  structure map $r_s:Y_s\to Y$,
then (\ref{main.thm.induct}.1) $\Leftrightarrow$
(\ref{main.thm.induct}.2) by definition
and  (\ref{main.thm.induct}.2) $\Rightarrow$
(\ref{main.thm.induct}.3)
holds for any choice of $Z$.

Assume next that we have a candidate for
 ${\mathbf D}_s$ and $Z$ such that.
 How do we check (\ref{main.thm.induct}.3) 
$\Rightarrow$ (\ref{main.thm.induct}.2)?

Pick $\Phi_s\in C^*\bigl(Y_s, F_s)$ and assume that
there is a section $\phi_Z\in C^*\bigl(Z, E|_{Z}\bigr)$
whose pull-back to $Y_Z$ equals the restriction
of $\Phi_s$. 
By (\ref{basic.props}), we can lift $\phi_Z$ to a section
$\phi_X \in C^*\bigl(X, E\bigr)$.
Consider next  
$$
\Psi_s:=\Phi_s-f_s\bigl(p_s^*\phi_X\bigr)\in  
C^*\bigl(Y_s, F_s\bigr).
$$  
We are done if we can write $\Psi_s=f_s\circ p_s^*(\psi_X)$
for some $\psi_X \in C^*\bigl(X, E\bigr)$.
 
By assumption, $\Psi_s$ satisfies the  finite set test (\ref{fin.set.test.defn})
  but the
 improvement is that $\Psi_s$  vanishes on $Y_Z$.
As we saw already in  (\ref{zero.liit.lem}), this can make
the problem much easier.
We deal with this case  in (\ref{red.step.main}).

Note that by \cite{whi}, we can choose
$\phi_X$ to be real analytic away from $Z$ and the rest of the construction
preserves differentiability properties. Thus
 (\ref{r.main.cors}.2) holds once the rest of the  argument is worked out.
 \qed
\end{say}

\begin{prop}\label{red.step.main}
 Let ${\mathbf D}=\bigl(p:Y\to X, f:p^*E\to F\bigr)$
 be a  descent problem.
Then there is a  closed
 algebraic subvariety $Z\subset X$ with $\dim Z<\dim X$ and 
 a  scion 
${\mathbf D}_s=\bigl(p_s:Y_s\to X, f_s:p_s^*E\to F_s\bigr)$
with  surjective  structure map $r_s:Y_s\to Y$
such that the following holds.

Let $\psi_s\in C^0(Y_s, F_s)$ be a section  that
 vanishes on  $p_s^{-1}( Z)$
and satisfies the   finite set test (\ref{fin.set.test.defn}).
Then there is a $\psi_X\in C^0(X,E)$ such that
$\psi_X$ vanishes on  $Z$ and 
$\psi_s=f_s\circ p_s^*(\psi_X)$.
\end{prop}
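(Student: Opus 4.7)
The plan is to reduce, via Stein factorization and the $Y^{(m)}_X$ construction of \ref{W(m).defn}, to a situation where a Galois-style descent is available over a Euclidean dense open subset $X \setminus Z$, and then to use the vanishing of $\psi_s$ along $p_s^{-1}(Z)$ to extend the resulting $\psi_X$ continuously by zero across $Z$. After restricting to irreducible components, I would resolve $Y$ (putting the non-main components of the resolution into what will become $Z$), take the Stein factorization $p : Y \stackrel{q}{\to} W \stackrel{s}{\to} X$, and apply \ref{irred.fiber.solution.II} $S_m$-equivariantly so that the hypotheses of \ref{irred.fiber.solution} hold; the exceptional data introduced in these steps is eventually absorbed into $Z$.

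For the scion itself I would take ${\mathbf D}_s$ to be (a suitable resolution of) the disjoint union of the scions ${\mathbf D}^{(m)}$ for $1 \leq m \leq \deg(s)$, further modified as in \ref{irred.fiber.solution} so that $Y_s = Y_s^h \amalg Y_s^v$ with $f_s$ an isomorphism on $Y_s^h$ and $Y_s^v$ lying over a lower-dimensional subvariety (included in $Z$). The structure map is surjective because every $y \in Y$ appears as a coordinate of a tuple in $Y^{(m)}_X$ for the appropriate $m$. Over the stratum $X_{W,m}^0$, the symmetric group $S_m$ acts transitively on the fibers of $p^{(m)}$, and the finite set test \ref{fin.set.test.defn} applied to the $m$ coordinates of a single tuple forces $\psi_s^{(m)}$ to satisfy the $S_m$-invariance needed for Galois descent via (\ref{pull-back.test}.2). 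Combined with $f_s$ being an isomorphism on $Y_s^h$, this produces a continuous section $\psi_X^{(m)}$ on $X_{W,m}^0$, and the finite set test applied across different values of $m$ ensures that these patch along the (lower-dimensional) overlaps.

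Taking $Z$ to be the union of the branch locus of $s$, the exceptional loci of the birational modifications, and the images of $Y_s^v$ and of the non-main resolution components yields a closed algebraic subvariety with $\dim Z < \dim X$ and a continuous $\psi_X$ on $X \setminus Z$. The hard part, for which the vanishing hypothesis is decisive, is continuity at $Z$: since $f_s$ is an isomorphism on $Y_s^h$, there is a bound $|\psi_X(x)| \leq C \sup_{y \in p_s^{-1}(x) \cap Y_s^h} |\psi_s(y)|$ for $x \in X \setminus Z$, and as $x \to Z$ the right-hand side tends to $0$ by continuity of $\psi_s$ together with $\psi_s|_{p_s^{-1}(Z)} = 0$, extending $\psi_X$ continuously by zero across $Z$. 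The delicate bookkeeping I expect to be the main obstacle is verifying that this bound is genuinely uniform as $x \to Z$, which amounts to checking that every locus where $f_s$ degenerates or where the Galois descent fails has already been absorbed into $Z$; the counterexample \ref{rpwt}.4 shows that nothing weaker than including all such loci will suffice.
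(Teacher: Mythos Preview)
Your proposal is correct and follows essentially the same architecture as the paper: resolve $Y$, take the Stein factorization, form the scions ${\mathbf D}^{(m)}$ from \ref{W(m).defn} for $1\le m\le \deg(W/X)$, apply \ref{irred.fiber.solution.II} $S_m$-equivariantly (and then \ref{irred.fiber.solution}) so that $f_s$ becomes an isomorphism on the horizontal part, and use the finite set test to obtain the $S_m$-invariance and fiberwise constancy needed for descent via (\ref{pull-back.test}.1--2).

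Two small points where the paper is cleaner than your sketch. First, the strata $X^0_{W,m}$ are pairwise \emph{disjoint} open semialgebraic sets whose boundaries lie in $Z$, so there is no patching across different $m$: one simply writes $\psi_s=\sum_m\psi_s^{(m)}$ with $\psi_s^{(m)}$ supported over $\overline{X^0_{W,m}}$, descends each piece separately to $\psi_X^{(m)}$ vanishing on $X\setminus X^0_{W,m}$, and sums. Second, your uniformity worry about the bound $|\psi_X(x)|\le C\sup|\psi_s(y)|$ is not a real obstacle---$f_s^{-1}$ is an algebraic bundle map on the \emph{closed} variety $Y_s^h$ and $p_s$ is proper, so the constant is automatically locally uniform---but the paper avoids the issue entirely by invoking (\ref{pull-back.test}.1) and (\ref{pull-back.test}.2) directly: once $\psi_s^{(m)}$ is $S_m$-invariant and vanishes outside $p_s^{-1}(X^0_{W,m})$, the descent to a continuous $\psi_X^{(m)}\in C^0(X,E)$ is purely structural, with no norm estimate needed.
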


Proof. 
We may harmlessly assume that $p(Y)$ is Zariski dense in $X$.
Using (\ref{res.say}) we may also assume that $Y$ is smooth.

After we construct  ${\mathbf D}_s$,
the plan 
is to make sure that $Z$ contains all of its  ``singular'' points. 
In the original setting of Question \ref{rq1},
$Z$ was the set where the map
$(f_1,\dots, f_r):\c^r\to \c$ has rank 0.
In the general case, we need to include points over which
$f_s$ drops rank and also points over which $p_s$ drops rank.

During the proof we gradually add more and more
irreducible components to $Z$.
To start with, we  add to $Z$ the lower dimensional
irreducible components of $X$, the locus where $X$ is not normal
and  the (Zariski closures of)  the $p(Y_i)$ where
$Y_i\subset Y$ is an irreducible component
that does not dominate any of the  maximal dimensional 
irreducible components  of $X$.
We can thus assume that $X$ is irreducible and
every  irreducible component of $Y$ dominates $X$.

Take  the Stein factorization $p:Y\stackrel{q}{\to} W\stackrel{s}{\to} X$
and set $M=\deg(W/X)$.
For each $1\leq m\leq M$, consider 
the following diagram
$$
\begin{array}{rclclcc}
\bigl(\bar q^{(m)}\bigr)^*\bar E^{(m)} & \cong & 
\bar F^{(m)} &  & F^{(m)} &  & F\\
&& \ \downarrow  &&\ \downarrow  &&\downarrow  \\
\bigl(t_W^{(m)}\circ s_W^{(m)}\bigr)^*E\onto \bar E^{(m)} & & \bar Y^{(m)}_X & 
\stackrel{t_Y^{(m)}}{\to} & Y^{(m)}_X & \stackrel{\pi_i^{(m)}}{\to} & Y\\
&\searrow & \ \downarrow\bar q^{(m)}
  &&\ \downarrow q^{(m)} &&\hphantom{p}\downarrow p  \\
& & \bar W^{(m)} & 
\stackrel{t_W^{(m)}}{\to} & W^{(m)} &  
\stackrel{s^{(m)}}{\to}  & X
\end{array}
\eqno{(\ref{red.step.main}.m)}
$$
where $ W^{(m)}$ and its column is constructed in
(\ref{W(m).defn}) and out of this $\bar W^{(m)}$, its column 
and the vector bundle $\bar E^{(m)}$ are constructed in
(\ref{irred.fiber.solution.II}).
Note that the symmetric group $S_m$ acts on the whole diagram.

The ${\mathbf D}_s$ we use will be the  disjoint union of the scions
$$
\bar{\mathbf D}_s^{(m)}:=\bigl(\bar p^{(m)}: \bar Y^{(m)}_X\to X, 
\bar f^{(m)}:\bigl(\bar p^{(m)}\bigr)^*E\to \bar F^{(m)}\bigr)
\qtq{for $m=1,\dots, M$.}
$$

By enlarging $Z$ if necessary, we may assume that 
 $Y^{(m)}_X\to X$ is smooth over $X\setminus Z$ and
 each $ t_W^{(m)}$ is an isomorphism  over $X\setminus Z$.
Note that, for every $m$,
$$
X^0_m:=p^{(m)}\bigl(Y^{(m)}_X\bigr)\setminus 
\Bigl(Z\cup p^{(m+1)}\bigl(Y^{(m+1)}_X\bigr)\Bigr)
\subset X
$$
 is an open  semialgebraic subset
of  $X\setminus Z$ whose  boundary is in $Z$.
Furthermore,  $p(Y)\setminus Z$ is the disjoint union of the  $X^0_m$
and  the fiber $Y_x$ has exactly $m$ irreducible components
if $x\in X^0_m$.

Let $\Psi_s\in C^0(Y_s, F_s)$ be a section  that
 vanishes on  $p_s^{-1}( Z)$.
We can then uniquely write $\Psi_s=\sum_m \Psi_s^{(m)}$ such that each
$\Psi_s^{(m)}$ vanishes on $Y_s\setminus p_s^{-1}\bigl(X^0_m\bigr)$. 
Moreover,   $\Psi_s$ satisfies the  finite set test (\ref{fin.set.test.defn})
iff all the $\Psi_s^{(m)}$ satisfy it. 

Thus it is sufficient to prove that each
 $\Psi_s^{(m)}$ is the pull-back of a
section $\psi_X^{(m)}\in C^*(X,E)$ that
vanishes on $X\setminus X^0_m$.
For each $m$ we use the corresponding diagram (\ref{red.step.main}$_m$).

Each $\Psi_s^{(m)}$ lifts to a section 
 $\bar\Psi_s^{(m)}$ of  $\bigl(\bar q^{(m)}\bigr)^*\bar E^{(m)}$
that satisfies the pull-back conditions 
for $\bar Y^{(m)}\to \bar W^{(m)}$.
Thus $\bar\Psi_s^{(m)}$ is the pull-back of a section $\bar\Psi_W^{(m)}$ of
$\bar E^{(m)}$. By construction, $\bar\Psi_W^{(m)}$
is  $S_m$-invariant  and it vanishes outside 
$\bigl(t^{(m)}\circ s^{(m)}\bigr)^{-1}\bigl(X^0_m\bigr)$. 
Using a splitting of 
$\bigl(s_W^{(m)}t_W^{(m)}\bigr)^*E\onto \bar E^{(m)}$
we can think of $\bar\Psi_W^{(m)}$ as an  $S_m$-invariant section of
$\bigl(t_W^{(m)}\circ s_W^{(m)}\bigr)^*E$. 
By the choice of $Z$, $t^{(m)}$ is an isomorphism
over $X^0_m$, hence   $\bar\Psi_W^{(m)}$
descends to an  $S_m$-invariant  section 
$\Psi_W^{(m)}$ of $\bigl(s_W^{(m)}\bigr)^*E$
that vanishes outside $\bigl(s_W^{(m)}\bigr)^{-1}\bigl(X^0_m\bigr)$.
 Therefore, 
by (\ref{pull-back.test}.2),   $\Psi_W^{(m)}$ descends to a   section
$\psi_X^{(m)} \in C^0\bigl(X, E\bigr)$
that vanishes on $X\setminus X^0_m$.
\qed

\subsection{Semialgebraic, real and $p$-adic analytic cases}

\begin{say}[Real analytic case]\label{realan.say}
It is natural to ask Question \ref{rq1} when the $f_i$ are 
real analytic functions and $\r^n$ is replaced by an arbitrary
 real analytic variety.
As before, we think of $X$ as the real points of  a  complex Stein space
  $X_{\c}$ that is  defined 
by real equations. 
Our proofs work without changes
for descent problems
 ${\mathbf D}=\bigl(p:Y\to X, f:p^*E\to F\bigr)$
where $Y$ and $f$ are {\it relatively algebraic} over $X$.

By definition, this means that $Y$ is the set of 
 real points of a closed (reduced but possibly reducible)
complex analytic  subspace of some
$X_{\c}\times \c\p^N$ and that $f$ is
 assumed algebraic in the
$\c\p^N$-variables.

This definition may not seem the most natural, but it 
is exactly the setting needed to answer Question  \ref{rq1}
if the $f_i$ are  real analytic functions on a real analytic space.
\end{say}

\begin{say}[Semialgebraic case]\label{semialg.say}
It is straightforward to consider semialgebraic
descent problems
 ${\mathbf D}=\bigl(p:Y\to X, f:p^*E\to F\bigr)$
where $X,Y$ are semialgebraic sets, $E,F$ are
 semialgebraic vector bundles and $p,f$ are
 semialgebraic maps. 
(See \cite[Chap.2]{bcr} for basic results and definitions.) 
It is not hard to go through the proofs and
see that everything generalizes to the semialgebraic case.

In fact, some of the constructions could be simplified
since one can break up any descent problem ${\mathbf D}$
into a union of descent problems ${\mathbf D}_i$
such that each $Y_i\to X_i$ is topologically a product
over the interior of $X_i$. This would allow one to make some
non-canonical choices to simplify the 
construction of the diagrams (\ref{red.step.main}.m).

It may be, however, worthwhile to note that
one can directly reduce the  semialgebraic version to the
real algebraic one as follows.

Note first that in the  semialgebraic  setting
 it is natural to replace a real algebraic descent problem
 ${\mathbf D}=\bigl(p:Y\to X, f:p^*E\to F\bigr)$
by its 
{\it semialgebraic reduction}
$\operatorname{sa-red}({\mathbf D})
:=\bigl(p:Y\to p(Y), f:p^*\bigl(E|_{p(Y)}\bigr)\to F\bigr)$.

We claim that for every semialgebraic
descent problem
 ${\mathbf D}$ there is a 
proper surjection $r:Y_s\to Y$ such that the corresponding scion
$r^*{\mathbf D}$ is semialgebraically isomorphic to 
the semialgebraic reduction of a
real algebraic descent problem.

To see this, first, we can replace the semialgebraic  $X$
by a real algebraic variety $X^a$ that contains it and
extend $E$ to  semialgebraic vector bundle over $X^a$.
Not all  semialgebraic vector bundles are algebraic,
but we can realize $E$ as a  semialgebraic subbundle 
of a trivial bundle $\c^M$. 
This in turn gives a  semialgebraic embedding of
$X$ into $X\times \grass(\rank E,M)$. Over the image, $E$ is
the restriction of the algebraic universal bundle on $\grass(\rank E,M)$.
Thus, up to replacing $X$ by the Zariski closure of its image,
we may assume that $X$ and $E$ are both algebraic.
Replacing $Y$ by the graph of $p$  in $Y\times X$,
we may assume that $p$ is algebraic. 
Next write $Y$ as the image of a real algebraic variety.
We obtain a  scion where now
$p:X\to Y, E, F$ are all algebraic.
To make $f$ algebraic, we use that $f$ defines a
semialgebraic  section of 
$\p\bigl(\shom_X(p^*E,F)\bigr)\to Y$. 
Thus, after 
replacing $Y$ by the Zariski closure of its image in 
$\p\bigl(\shom_X(p^*E,F)\bigr)\to Y$,
we obtain an algebraic scion with  surjective  structure map.
\end{say}

\begin{say}[$p$-adic case]\label{p-adic.say}
One can also consider  Question \ref{rq1}
in the $p$-adic case and the proofs work without any changes.
In fact, if we start with polynomials
$f_i\in \q[x_1,\dots, x_n]$ then in Theorem
 \ref{main.thm.v2} 
it does not matter whether we want to work over $\r$
or $\q_p$; we construct the same
descent problems. It is only in checking the  finite set test 
(\ref{fin.set.test.defn})
that the field needs to be taken into account:
if we work over $\r$, we need to check the condition for
fibers over all real points, if we work over $\q_p$,
 we need to check the condition for
fibers over all $p$-adic points.
\end{say}

\begin{ack} We thank M.~Hochster for communicating his
unpublished example (\ref{rpwt}.4).
We are grateful to B.~Klartag and A.~Naor for 
bringing Michael's theorem to our attention at a workshop organized by the
 American Institute of Mathematics (AIM), to which we are also grateful. 
 Our earlier proof of  (\ref{35.8}) was unnecessarily complicated.
We thank
 H.~Brenner, A.~Isarel,  K.~Luli, R.~Narasimhan, A.~N\'emethi and T.~Szamuely
 for helpful conversations  and
 F.~Wroblewski
for TeXing several sections of this paper.

Partial financial support for CF  was provided by  the NSF under grant number 
DMS-0901040 and by the ONR under grant number  N00014-08-1-0678. 
Partial financial support for JK  was provided by  the NSF under grant number 
DMS-0758275.
\end{ack}


\noindent Princeton University, Princeton NJ 08544-1000

{\begin{verbatim}cf@math.princeton.edu\end{verbatim}}

{\begin{verbatim}kollar@math.princeton.edu\end{verbatim}}


\providecommand{\bysame}{\leavevmode\hbox to3em{\hrulefill}\thinspace}
\providecommand{\MR}{\relax\ifhmode\unskip\space\fi MR }
\providecommand{\MRhref}[2]{%
  \href{http://www.ams.org/mathscinet-getitem?mr=#1}{#2}
}
\providecommand{\href}[2]{#2}

\end{document}